\DeclareMathAlphabet{\mathscr}{OT1}{pzc}{m}{it}
\newtheorem{remark}{Remark}
\newtheorem{corollary}{Corollary}
\newtheorem{lemma}{Lemma}
\newtheorem{theorem}{Theorem}
\newtheorem{example}{Example}
\title{
	\bf{The existence of quasi-periodic invariant tori and double Hopf bifurcation of van der Pol's oscillator with delayed feedback}
	\author{
		Xuemei Li, Bochao Yu\thanks{Corresponding author.}
		\setcounter{footnote}{-1}
		\footnote{E-mail addresses: lixuemei\_1@sina.com (X. Li), yubochao\_1@hunnu.edu.cn (B. Yu).}\\\\
		\small Key Laboratory of Computing and Stochastic Mathematics (Ministry of Education),\\
		\small School of Mathematics and Statistics, Hunan Normal University, Changsha, Hunan 410081, P. R. China.
	}
}
\date{}
\begin{document}

\maketitle

\textbf{Abstract}
The double Hopf bifurcation and the existence of quasi-periodic invariant tori in a delayed van der Pol's oscillator are considered by regarding the damped coefficient and the delay as bifurcation parameters.
Applying the center manifold reduction and the normal form method, we derive the normal form near the critical point and analyse the existence of invariant 2-tori and 3-tori for the truncated normal form.
Furthermore, the effect of higher-order terms on these invariant tori are investigated by a KAM theorem, and it is proved that in a sufficiently small neighborhood of the bifurcation point, the delayed van der Pol's oscillator has quasi-periodic invariant 2-tori and 3-tori for most of the parameter set where its truncated normal form possesses quasi-periodic invariant 2-tori and 3-tori, respectively.

\textbf{Keywords}
van der Pol's oscillator,
Double Hopf bifurcation,
Normal form,
Center manifold reduction,
Quasi-periodic invariant torus.


\section{Introduction}
Balthasar van der Pol \cite{L1} presented the oscillator model
\begin{align*}
	\ddot{x}(t)+\varepsilon(x^{2}(t)-1)\dot{x}(t)+x(t)=f(t),
\end{align*}
where $\varepsilon>0$ is the damped coefficient, and $f(t)$ is the forcing.
Originally it is a model for the electrical circuit with a triode valve, and has evolved into one of the most celebrated equations in the study of nonlinear dynamics \cite{L17,L18,L19}.
Van der Pol has found that the unforced system has an unstable trivial equilibrium and contains a stable limit cycle for all values of $\varepsilon>0$.
When $f(t)$ is periodic or quasi-periodic, there are much more complicated phenomena in the system, and it can exhibit chaotic behaviour.
\par
In recent years a growing interest has been witnessed in the effect of delayed feedback on dynamics \cite{L20,L21}.
There have been much study of van der Pol's oscillator with delayed feedback, in particular, from bifurcation point of view, see \cite{L3,L4,L5,L6,L7,L8,L9}.
Atay \cite{L2} used a linear delayed feedback as the forcing
\begin{align}\label{1.2}
	\ddot{x}(t)+\varepsilon(x^{2}(t)-1)\dot{x}(t)+x(t)=\varepsilon k x(t-\tau),
\end{align}
and studied the behaviour of the limit cycle.
By the normal form method and the center manifold reduction, Wei and Jiang \cite{L3} showed that the system \eqref{1.2} undergoes a Hopf bifurcation and discussed the direction and stability of Hopf bifurcation.
\par
Introducing a nonlinear delayed feedback to the van der Pol's oscillator
\begin{align}\label{fulleq}
	\ddot{x}(t)+\varepsilon(x^{2}(t)-1)\dot{x}(t)+x(t)=\varepsilon f(x(t-\tau)),
\end{align}
where $f(0)=f^{\prime\prime}(0)=0$, $f^{\prime}(0)=a\neq0$, Jiang and Wei \cite{L4} found that under certain conditions, single, double and triple zero eigenvalues are possible for equilibria as well as a purely imaginary pair with a zero eigenvalue.
Furthermore, some researchers considered corresponding bifurcations for these cases, including the fold and Hopf bifurcation\cite{L4}, Bogdanov-Takens bifurcation\cite{L5}, Zero-Hopf bifurcation \cite{L6,L7} and Triple bifurcation \cite{L8}.
Zhang and Guo \cite{L9} analysed non-semisimple 1:1 resonance Hopf bifurcation by using the normal form approach, and found that there exist zero, one or two small-amplitude periodic solutions.
In the codimension 2 bifurcations, only the case where the characteristic equation has two pairs of simple imaginary roots has not been studied.
\par
In this paper, we consider the double Hopf bifurcation of system \eqref{fulleq} where its characteristic equation has two pairs of simple imaginary roots, and the persistence of invariant tori arising from the double Hopf bifurcation.
First of all, we obtain the sufficient condition that the characteristic equation has two pairs of simple imaginary roots, and analyse the existence of invariant 2-tori and 3-tori for the truncated normal form of system \eqref{fulleq}.
There are some works on double Hopf bifurcation for delayed oscillators \cite{L22,L16,L23,L24}, where the effect of higher-order terms is discussed by numerical simulations.
Since Kuznetsov pointed out in his monograph \cite{L11} that the existence of an invariant torus in the truncated system does not guarantee that the full system will also have one, the effect of higher-order terms on these invariant tori should be taken into account.
Then, we aim to investigate whether 2-tori and 3-tori will remain by the KAM method \cite{L12,L13,L14,L15} if we add higher-order terms to the truncated normal form.
We will prove that in a sufficiently small neighborhood of the bifurcation point, system \eqref{fulleq} has quasi-periodic invariant 2-tori and 3-tori for most of the parameter set where its truncated normal form possesses quasi-periodic invariant 2-tori and 3-tori, respectively.
Finally, some numerical simulations are provided to support our theoretical results.
Throughout this paper, we always assume that $\varepsilon>0$ and the function $f(x)$ is sufficiently smooth in a neighbourhood of $x=0$ and 
\begin{align*}
	f(0)=f^{\prime\prime}(0)=0, \quad f^{\prime}(0)=a \neq 0.
\end{align*}
Denote
\begin{align*}
	f^{\prime\prime\prime}(0)=3!b, \quad f^{(4)}(0)=4!c, \quad f^{(5)}(0)=5!d.
\end{align*}
\par
The rest of this paper is organized as follows.
The distribution of characteristic roots, the derivation of normal form and the dynamic behaviors of the truncated normal form are developed in Section \ref{sec2}.
In Section \ref{sec3}, the persistence of quasi-periodic invariant 2-tori and 3-tori of the truncated system of \eqref{fulleq} is proved by a KAM theorem.
Section \ref{sec4} provides some numerical simulations.
Finally, the Appendix collects the proof of Lemma \ref{lemma2} and some complex coefficients in the normal form.

\section{Double Hopf bifurcation}\label{sec2}
\subsection{The characteristic equation}\label{sec2.1}
In this subsection, the double Hopf bifurcation is investigated by regarding $\varepsilon$ and $\tau$ as bifurcation parameters.
It is clear that the origin is an equilibrium of \eqref{fulleq}, and the associated characteristic equation is
\begin{align}\label{ceq}
	\lambda^{2} - \varepsilon \lambda - \varepsilon a e^{- \lambda \tau} + 1 = 0.
\end{align}
In order to obtain the sufficient condition that the characteristic equation has two pairs of simple imaginary roots, substituting $\lambda=\pm {\rm i}\omega$ into \eqref{ceq} and separating the real and imaginary parts gives
\begin{align}\label{ceqs1}
	\left\{
	\begin{array}{ll}
		\omega=a\sin(\omega\tau),\\
		1-\omega^{2}=\varepsilon a\cos(\omega\tau).
	\end{array}
	\right.
\end{align}
It follows from \eqref{ceqs1} that $\omega$ satisfies
\begin{align}\label{ceqs2}
	\omega^{4}+(\varepsilon^{2}-2)\omega^{2}+1-\varepsilon^{2} a^{2}=0.
\end{align}
Assume
\begin{align}\label{H0}
	0<\varepsilon<\sqrt{2}, \qquad \varepsilon \left| a \right| <1, \qquad \varepsilon^{2}+4a^{2}>4,
\end{align}
then \eqref{ceqs2} has two different positive roots
\begin{align*}
	\omega_{1,2}=\frac{1}{\sqrt{2}} \left( 2-\varepsilon^{2} \pm \varepsilon \sqrt{\varepsilon^{2}-4+4 a^{2}}\right) ^{\frac{1}{2}}, \qquad \omega_{1}>\omega_{2},
\end{align*}
and \eqref{ceqs1} implies
\begin{align*}
	\tau_{1,2}^{j}=\left\{
		\begin{array}{ll}
			\frac{1}{\omega_{1,2}}(\arccos(\frac{1-\omega_{1,2}^{2}}{\varepsilon a})+2j\pi), & a>0,\\
			\frac{1}{\omega_{1,2}}(2\pi-\arccos(\frac{1-\omega_{1,2}^{2}}{\varepsilon a})+2j\pi), & a<0,
		\end{array}
	\right.
	\qquad 
	j=0,1,2,\cdots.
\end{align*}
From the above discussion, we obtain the following lemma.
\begin{lemma}\label{lemma1}
	Assume that \eqref{H0} holds for system \eqref{fulleq}.
	If there is an $\varepsilon_{0}$ such that $\tau_{1}^{m}=\tau_{2}^{n}:=\tau_{0}$ for some $m,n\in\mathbb{N}$, then the characteristic equation \eqref{ceq} has two pairs of simple imaginary roots $\pm{\rm i}\omega_{1}$ and $\pm{\rm i}\omega_{2}$ at $(\varepsilon,\tau)=(\varepsilon_{0},\tau_{0})$.
\end{lemma}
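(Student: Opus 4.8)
The plan is to split the statement into two parts: first, that $\pm{\rm i}\omega_{1}$ and $\pm{\rm i}\omega_{2}$ are roots of \eqref{ceq} at $(\varepsilon,\tau)=(\varepsilon_{0},\tau_{0})$, and second, that each of these four roots is simple.

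For the first part I would simply unwind the construction preceding the lemma. Under \eqref{H0} the equation \eqref{ceqs2} has the two distinct positive roots $\omega_{1}>\omega_{2}$, and by definition the numbers $\tau_{1,2}^{j}$ are precisely the values of $\tau$ for which $(\omega_{1,2},\tau)$ satisfies the system \eqref{ceqs1}, that is, for which $\lambda={\rm i}\omega_{1,2}$ solves \eqref{ceq} (separating real and imaginary parts of \eqref{ceq} at $\lambda={\rm i}\omega$ is exactly \eqref{ceqs1}). Hence the coincidence $\tau_{0}=\tau_{1}^{m}=\tau_{2}^{n}$ makes both ${\rm i}\omega_{1}$ and ${\rm i}\omega_{2}$ roots of \eqref{ceq} at $\varepsilon=\varepsilon_{0}$; since the coefficients of \eqref{ceq} are real, $-{\rm i}\omega_{1}$ and $-{\rm i}\omega_{2}$ are roots as well, and $\omega_{1}>\omega_{2}>0$ makes the four numbers $\pm{\rm i}\omega_{1},\pm{\rm i}\omega_{2}$ pairwise distinct.

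For the second part, write $\Delta(\lambda)=\lambda^{2}-\varepsilon\lambda-\varepsilon a e^{-\lambda\tau}+1$; a purely imaginary root ${\rm i}\omega$ is simple exactly when $\Delta'({\rm i}\omega)\neq0$. I would compute $\Delta'(\lambda)=2\lambda-\varepsilon+\varepsilon a\tau e^{-\lambda\tau}$ and use \eqref{ceq} to substitute $\varepsilon a e^{-{\rm i}\omega\tau}=1-\omega^{2}-{\rm i}\varepsilon\omega$, obtaining
\begin{align*}
	\Delta'({\rm i}\omega)=\bigl(\tau(1-\omega^{2})-\varepsilon\bigr)+{\rm i}\,\omega(2-\varepsilon\tau).
\end{align*}
If $\Delta'({\rm i}\omega)=0$, then, since $\omega>0$, the imaginary part forces $\varepsilon\tau=2$, and substituting this back into the real part forces $\omega^{2}=1-\varepsilon^{2}/2$; but the explicit expression $\omega_{1,2}^{2}=1-\tfrac{\varepsilon^{2}}{2}\pm\tfrac{\varepsilon}{2}\sqrt{\varepsilon^{2}-4+4a^{2}}$ would then require $\varepsilon^{2}-4+4a^{2}=0$, contradicting the assumption $\varepsilon^{2}+4a^{2}>4$ in \eqref{H0}. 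Therefore $\Delta'({\rm i}\omega_{k})\neq0$ for $k=1,2$, and by conjugation $\Delta'(-{\rm i}\omega_{k})=\overline{\Delta'({\rm i}\omega_{k})}\neq0$, so all four roots are simple, which completes the argument.

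I do not anticipate a genuine difficulty here: the only delicate point is the derivative computation in the second step, where one must substitute the characteristic relation correctly and verify that \eqref{H0}, specifically the condition $\varepsilon^{2}+4a^{2}>4$, excludes the degenerate configuration $\varepsilon\tau=2$; everything else is bookkeeping.
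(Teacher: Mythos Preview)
Your proposal is correct. The paper does not give a standalone proof of this lemma: it simply writes ``From the above discussion, we obtain the following lemma,'' relying on the construction of $\omega_{1,2}$ and $\tau_{1,2}^{j}$ immediately preceding the statement. (The appendix subsection headed ``The proof of Lemma~\ref{lemma1}'' is a mislabel; its content is the non-resonance argument for Lemma~\ref{lemma2}.)

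Your first part is exactly that implicit argument, spelled out. Your second part --- the explicit computation of $\Delta'({\rm i}\omega)$, the substitution $\varepsilon a e^{-{\rm i}\omega\tau}=1-\omega^{2}-{\rm i}\varepsilon\omega$, and the use of the strict inequality $\varepsilon^{2}+4a^{2}>4$ in \eqref{H0} to rule out $\omega_{k}^{2}=1-\varepsilon^{2}/2$ --- is a genuine addition: the paper never explicitly checks simplicity of the imaginary roots. Your computation is correct, and the approach (differentiate, substitute the characteristic relation, separate real and imaginary parts) is the standard one. One small remark on phrasing: \eqref{H0} does not by itself forbid $\varepsilon\tau=2$; what it forbids is the combination $\varepsilon\tau=2$ together with $\omega^{2}=1-\varepsilon^{2}/2$ for $\omega=\omega_{k}$, which is what your argument actually uses.
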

\begin{figure}
	\centering
	\includegraphics[width=100mm]{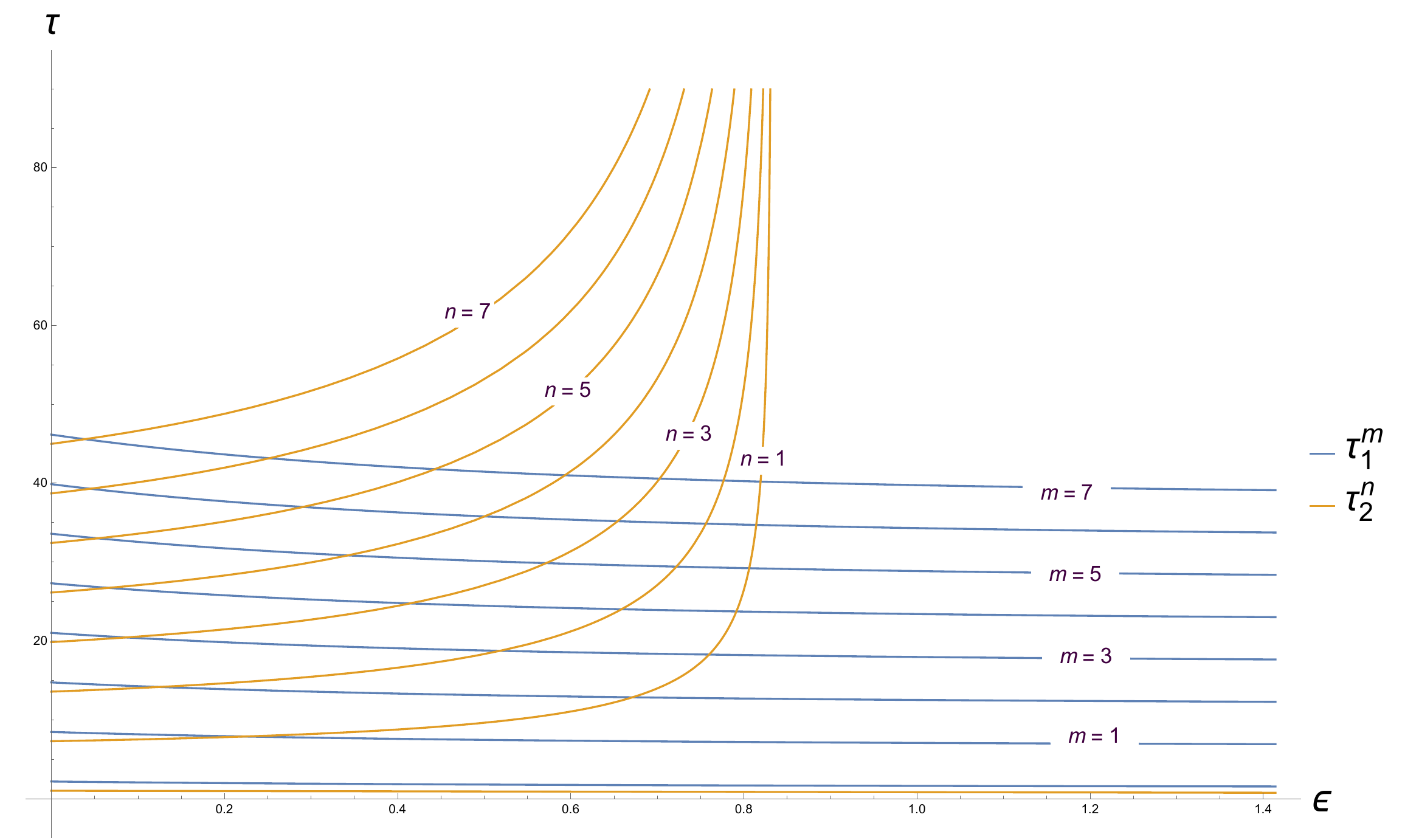}
	\caption{The Hopf bifurcation curves of system \eqref{fulleq} for $a=1.2$ and $m$, $n$ are from $0$ to $7$.}
	\label{figure-existence}
\end{figure}
As shown in Figure \ref{figure-existence}, there are some intersection points of $\tau_{1}^{m}$ and $\tau_{2}^{n}$.
Furthermore, a double Hopf bifurcation occurs at these points.
\par
It is well known that the double Hopf bifurcation may lead to more complex behaviors if the imaginary roots are resonant.
Letting $\pm{\rm i}\omega_{1}$, $\pm{\rm i}\omega_{2}$ be two pairs of imaginary roots of \eqref{ceq}, if $\omega_{1}:\omega_{2}=3:1$, $2:1$ or $1:1$, the double Hopf bifurcation is called strong resonant, which is a codimension-three bifurcation. 
Otherwise, it is called weak resonance or nonresonance, which is a codimension-two bifurcation.
For system \eqref{fulleq}, the 1:1 resonant case only can occur when the third inequality in (6) is replaced by $\varepsilon^{2}+4a^{2}=4$, and this case has been studied in \cite{L9}.
The following result indicates that the other two cases of the strong resonance do not exist.
\begin{lemma}\label{lemma2}
	Under assumption \eqref{H0}, $k\omega_{1} \neq l\omega_{2} $ for all integers $k$ and $l$ that satisfyfy $1 \leq \left| k \right| + \left| l \right| \leq 4$.
\end{lemma}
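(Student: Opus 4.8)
The plan is to reduce the claim to excluding three specific resonances and then to kill each one using the defining relations \eqref{ceqs1}. First I would note that, since $\omega_1>\omega_2>0$, an equality $k\omega_1=l\omega_2$ with $1\le|k|+|l|\le4$ forces $k,l$ to be nonzero and of the same sign (otherwise one side vanishes or the two sides have opposite signs), so after replacing $(k,l)$ by $(-k,-l)$ if necessary we may take $k,l\ge1$, hence $2\le k+l\le4$. Running through the six pairs $(1,1),(2,1),(1,2),(3,1),(1,3),(2,2)$ and discarding $2\omega_1=\omega_2$ and $3\omega_1=\omega_2$ because $\omega_1>\omega_2$, it remains to rule out $\omega_1=\omega_2$, $\omega_1=2\omega_2$ and $\omega_1=3\omega_2$.

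The case $\omega_1=\omega_2$ follows at once from \eqref{H0}: regarded as a quadratic in $\omega^2$, equation \eqref{ceqs2} has discriminant $(\varepsilon^2-2)^2-4(1-\varepsilon^2a^2)=\varepsilon^2(\varepsilon^2+4a^2-4)$, which is strictly positive by the third inequality in \eqref{H0}; hence \eqref{ceqs2} has two distinct positive roots and $\omega_1\neq\omega_2$.

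For the remaining two cases I would use that at the double Hopf point $(\varepsilon,\tau)=(\varepsilon_0,\tau_0)$ of Lemma \ref{lemma1} both $\pm{\rm i}\omega_1$ and $\pm{\rm i}\omega_2$ are roots of \eqref{ceq} for the \emph{same} parameters, so the first equation of \eqref{ceqs1} gives $\sin(\omega_1\tau_0)=\omega_1/a$ and $\sin(\omega_2\tau_0)=\omega_2/a$. Putting $\phi=\omega_2\tau_0$, we have $\sin\phi=\omega_2/a\neq0$ since $\omega_2>0$. If $\omega_1=2\omega_2$, then $\omega_1\tau_0=2\phi$ and $2\sin\phi\cos\phi=\sin(2\phi)=\omega_1/a=2\sin\phi$, forcing $\cos\phi=1$ and hence $\sin\phi=0$, a contradiction; if $\omega_1=3\omega_2$, then $\omega_1\tau_0=3\phi$ and $3\sin\phi-4\sin^{3}\phi=\sin(3\phi)=\omega_1/a=3\sin\phi$, forcing $\sin^{3}\phi=0$, again a contradiction. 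The sign of $a$ is immaterial here, as $\sin\phi=\omega_2/a$ is nonzero in either case.

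I do not expect a real obstacle in this argument. The one thing to notice is that the \emph{sine} relation in \eqref{ceqs1}, shared by $\omega_1$ and $\omega_2$ at the common delay $\tau_0$, already forbids every integer resonance $\omega_1=k\omega_2$ with $k\ge2$, since $\sin(k\phi)=k\sin\phi$ forces $\sin\phi=0$; accordingly \eqref{H0} is used only to produce the two frequencies and to secure $\omega_1\neq\omega_2$, while the genuinely used extra ingredient is the double Hopf relation $\tau_1^m=\tau_2^n$ of Lemma \ref{lemma1}. The only care in the write-up goes into the initial bookkeeping over $(k,l)$ — the cases with a zero or opposite-sign component and the composite pair $(2,2)$.
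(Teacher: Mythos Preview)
Your argument is correct and is essentially the same as the paper's: both proofs dispatch $\omega_1=2\omega_2$ and $\omega_1=3\omega_2$ via the double- and triple-angle identities applied to the first relation in \eqref{ceqs1}, after noting that $\omega_1\neq\omega_2$ follows from the strict inequality $\varepsilon^2+4a^2>4$. Your write-up is a bit more careful than the paper's in two respects---you spell out the reduction over the pairs $(k,l)$ explicitly, and you make transparent that the key step uses a \emph{common} delay $\tau_0$ for both frequencies (the hypothesis of Lemma~\ref{lemma1}), which the paper's proof invokes only implicitly by writing the same $\tau$ in both sine relations.
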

\begin{remark}
	In fact, under assumption \eqref{H0}, we can obtain that $\omega_{1}$ and $\omega_{2}$ are nonresonant up to order $6$, i.e., $k\omega_{1} \neq l\omega_{2} $ for all integers $k$ and $l$ satisfying $1 \leq \left| k \right| + \left| l \right| \leq 6$.
\end{remark}
The proof of Lemma \ref{lemma2} is seen in Appendix A.1.
\par
Let $\lambda_{1,2}(\varepsilon,\tau)=\mu_{1,2}(\varepsilon,\tau)\pm{\rm i}\omega_{1,2}(\varepsilon,\tau)$ be two roots of \eqref{ceq} near $(\varepsilon_{0},\tau_{0})$, such that $\mu_{1,2}(\varepsilon_{0},\tau_{0})=0$, $\omega_{1,2}(\varepsilon_{0},\tau_{0})=\omega_{1,2}$.
According to the definition of double Hopf bifurcation, the transversality condition
\begin{equation}\label{tc}
	\begin{aligned}
		\Delta=&\Delta(\varepsilon_{0}, \tau_{0})=\det \left. \frac{\partial(\mu_1,\mu_2)}{\partial(\varepsilon, \tau)} \right|_{(\varepsilon,\tau)=(\varepsilon_{0},\tau_{0})}\\
		=&\frac{\varepsilon_{0} (\omega_{2}^{2}-\omega_{1}^{2}) (2 a^{2} \tau_{0}+a^2 \tau _0 \varepsilon _0^2-2 a^2 \varepsilon _0-2 \tau _0+\varepsilon _0)}{\left(a^2 \tau _0^2 \varepsilon _0^2+4 \omega _1^2-2 \tau _0 \varepsilon _0 \omega _1^2 -2 \tau _0 \varepsilon _0+\varepsilon _0^2\right) \left(a^2 \tau _0^2 \varepsilon _0^2+4 \omega _2^2-2 \tau _0 \varepsilon _0 \omega _2^2 -2 \tau _0 \varepsilon _0+\varepsilon _0^2\right)} \neq 0
	\end{aligned}
\end{equation}
should be taken into account.
In summary, we have the following result.
\begin{theorem}\label{theorem1}
	Suppose that system \eqref{fulleq} satisfies \eqref{H0}. 
	If there exists an $\varepsilon_{0}$ such that $\tau_{1}^{m}=\tau_{2}^{n}:=\tau_{0}$ for some $m,n\in\mathbb{N}$, then \eqref{ceq} with $\varepsilon=\varepsilon_{0}$ and $\tau=\tau_{0}$ has two pairs of simple imaginary roots $\pm {\rm i}\omega_{1}$, $\pm {\rm i}\omega_{2}$.
	Moreover, system \eqref{fulleq} may undergo a double Hopf bifurcation when $\Delta(\varepsilon_{0}, \tau_{0}) \neq 0$.
\end{theorem}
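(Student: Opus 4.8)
The plan is to read the statement as two claims and dispose of each in turn.

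The assertion that, for $(\varepsilon,\tau)=(\varepsilon_{0},\tau_{0})$, equation \eqref{ceq} has the two pairs of simple imaginary roots $\pm{\rm i}\omega_{1},\pm{\rm i}\omega_{2}$ is exactly Lemma \ref{lemma1}, so I would simply invoke it. The mechanism is that under \eqref{H0} the biquadratic \eqref{ceqs2} has the two distinct positive roots $\omega_{1}>\omega_{2}$, and the coincidence $\tau_{1}^{m}=\tau_{2}^{n}=\tau_{0}$ makes $\pm{\rm i}\omega_{1}$ and $\pm{\rm i}\omega_{2}$ satisfy \eqref{ceqs1}, hence \eqref{ceq}, at one and the same parameter value; the four roots are distinct because $\omega_{1}\neq\omega_{2}$, and each is simple because, writing $F(\lambda,\varepsilon,\tau):=\lambda^{2}-\varepsilon\lambda-\varepsilon a e^{-\lambda\tau}+1$, one has
\begin{align*}
	F_{\lambda}({\rm i}\omega_{j},\varepsilon_{0},\tau_{0})=2{\rm i}\omega_{j}-\varepsilon_{0}+\varepsilon_{0}a\tau_{0}e^{-{\rm i}\omega_{j}\tau_{0}}=\bigl(\tau_{0}(1-\omega_{j}^{2})-\varepsilon_{0}\bigr)+{\rm i}\,\omega_{j}(2-\varepsilon_{0}\tau_{0})\neq 0,
\end{align*}
where the last step uses $\omega_{j}=a\sin(\omega_{j}\tau_{0})$ and $1-\omega_{j}^{2}=\varepsilon_{0}a\cos(\omega_{j}\tau_{0})$, and the nonvanishing follows since $\varepsilon_{0}^{2}+4a^{2}>4$ forbids $\omega_{j}^{2}=1-\tfrac{1}{2}\varepsilon_{0}^{2}$.

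For the double Hopf claim I would verify the transversality condition \eqref{tc} directly. Simplicity of ${\rm i}\omega_{j}$ lets the implicit function theorem produce a $C^{1}$ root branch $\lambda_{j}(\varepsilon,\tau)=\mu_{j}(\varepsilon,\tau)+{\rm i}\,\omega_{j}(\varepsilon,\tau)$ of \eqref{ceq} near $(\varepsilon_{0},\tau_{0})$ with $\mu_{j}(\varepsilon_{0},\tau_{0})=0$. Implicit differentiation of $F\equiv 0$ gives
\begin{align*}
	\frac{\partial\lambda_{j}}{\partial\varepsilon}=-\frac{F_{\varepsilon}}{F_{\lambda}}=\frac{\lambda_{j}+a e^{-\lambda_{j}\tau}}{2\lambda_{j}-\varepsilon+\varepsilon a\tau e^{-\lambda_{j}\tau}},\qquad
	\frac{\partial\lambda_{j}}{\partial\tau}=-\frac{F_{\tau}}{F_{\lambda}}=-\frac{\varepsilon a\,\lambda_{j}\,e^{-\lambda_{j}\tau}}{2\lambda_{j}-\varepsilon+\varepsilon a\tau e^{-\lambda_{j}\tau}}.
\end{align*}
Evaluating these at $\lambda_{j}={\rm i}\omega_{j}$, $(\varepsilon,\tau)=(\varepsilon_{0},\tau_{0})$, substituting $e^{-{\rm i}\omega_{j}\tau_{0}}=\cos(\omega_{j}\tau_{0})-{\rm i}\sin(\omega_{j}\tau_{0})$ and clearing the trigonometric quantities via \eqref{ceqs1}, I would extract $\partial\mu_{j}/\partial\varepsilon=\mathrm{Re}(\partial\lambda_{j}/\partial\varepsilon)$ and $\partial\mu_{j}/\partial\tau=\mathrm{Re}(\partial\lambda_{j}/\partial\tau)$ as rational functions of $\omega_{j},\varepsilon_{0},\tau_{0},a$ with common denominator $\lvert F_{\lambda}({\rm i}\omega_{j},\varepsilon_{0},\tau_{0})\rvert^{2}=a^{2}\tau_{0}^{2}\varepsilon_{0}^{2}+4\omega_{j}^{2}-2\tau_{0}\varepsilon_{0}\omega_{j}^{2}-2\tau_{0}\varepsilon_{0}+\varepsilon_{0}^{2}$, i.e.\ one of the two denominator factors in \eqref{tc}. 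Assembling the $2\times 2$ matrix $\partial(\mu_{1},\mu_{2})/\partial(\varepsilon,\tau)$, taking its determinant, and reducing the numerator with the relations $\omega_{1}^{2}+\omega_{2}^{2}=2-\varepsilon_{0}^{2}$ and $\omega_{1}^{2}\omega_{2}^{2}=1-\varepsilon_{0}^{2}a^{2}$ (Vieta's formulas for \eqref{ceqs2} at $\varepsilon=\varepsilon_{0}$), the determinant should collapse to the displayed closed form $\Delta(\varepsilon_{0},\tau_{0})$. Since $\omega_{1}\neq\omega_{2}$ the prefactor $\varepsilon_{0}(\omega_{2}^{2}-\omega_{1}^{2})$ is nonzero, so whenever the hypothesis $\Delta(\varepsilon_{0},\tau_{0})\neq 0$ holds, $(\varepsilon,\tau)\mapsto(\mu_{1},\mu_{2})$ is a local diffeomorphism at $(\varepsilon_{0},\tau_{0})$; together with the nonresonance of $\omega_{1},\omega_{2}$ furnished by Lemma \ref{lemma2} this supplies the transversality and (generic) nondegeneracy for a double Hopf bifurcation of \eqref{fulleq}, which proves the theorem.

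I expect the only genuine obstacle to be the last algebraic step: after clearing the two denominators $\lvert F_{\lambda}({\rm i}\omega_{1})\rvert^{2}$ and $\lvert F_{\lambda}({\rm i}\omega_{2})\rvert^{2}$ one is left with a polynomial identity in $\omega_{1},\omega_{2},\varepsilon_{0},\tau_{0},a$ that must be simplified modulo the two symmetric-function relations from \eqref{ceqs2} to reach \eqref{tc}; this is lengthy but purely mechanical, and the efficient route is to carry it out carefully (or to confirm it with a computer-algebra system). Everything else reduces to Lemmas \ref{lemma1}–\ref{lemma2} and the implicit function theorem.
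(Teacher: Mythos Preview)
Your proposal is correct and follows the same route as the paper: the first claim is Lemma~\ref{lemma1}, and the second is the transversality computation leading to~\eqref{tc}, with Lemma~\ref{lemma2} supplying nonresonance. In fact the paper treats Theorem~\ref{theorem1} as a summary of the preceding discussion and does not write out a separate proof, so your explicit simplicity check via $F_{\lambda}({\rm i}\omega_{j},\varepsilon_{0},\tau_{0})\neq 0$ and your derivation of $\Delta$ by implicit differentiation and Vieta for~\eqref{ceqs2} are more detailed than what the paper records; note also that your nonvanishing of $F_{\lambda}$ is exactly the well-definedness of the paper's $D_{j}=F_{\lambda}({\rm i}\omega_{j},\varepsilon_{0},\tau_{0})^{-1}$ used later.
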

\begin{remark}
	\begin{itemize}
		\item[(i)] When $a=1.2$, $\varepsilon=\varepsilon_{0} \approx 0.2231$ and $\tau=\tau_{0} \approx 7.8628$, equation \eqref{ceq} has two pairs of purely imaginary roots $\pm{\rm i}\omega_{1}$ and $\pm{\rm i}\omega_{2}$ with $\omega_{1} \approx 1.0608$, $\omega_{2} \approx 0.9083$, and $\Delta(\varepsilon_{0}, \tau_{0}) \approx -0.2249$.
		\item[(ii)] When $a=1$, $\varepsilon=\varepsilon_{0} \approx 0.2533$ and $\tau=\tau_{0}= 2.5 \pi$, equation \eqref{ceq} has two pairs of purely imaginary roots $\pm{\rm i}\omega_{1}$ and $\pm{\rm i}\omega_{2}$ with $\omega_{1}=1$, $\omega_{2} \approx 0.9674$, and $\Delta(\varepsilon_{0}, \tau_{0}) \approx-1.0070$.
	\end{itemize}
\end{remark}

\subsection{Normal form for the double Hopf bifurcation}\label{sec2.2}
In this subsection, we derive the normal form near the double Hopf bifurcation point $(\varepsilon,\tau)=(\varepsilon_{0},\tau_{0})$ on the center manifold by the method in \cite{L10}.
Letting $x(t)=x_{1}(t)$, $\dot{x}(t)=x_{2}(t)$, system \eqref{fulleq} can be written as
\begin{align}\label{onefulleq}
	\left\{
	\begin{array}{ll}
		\dot{x}_{1}(t)=x_{2}(t),\\
		\dot{x}_{2}(t)=-x_{1}(t)-\varepsilon(x_{1}^{2}(t)-1)x_{2}(t)+\varepsilon f(x_{1}(t-\tau)).
	\end{array}
	\right.
\end{align}
Firstly, we rescale the time by $t \rightarrow \tau t$ and let $\varepsilon=\varepsilon_{0}+\alpha_{1}$, $\tau=\tau_{0}+\alpha_{2}$, $x_{j}(\tau t) \rightarrow x_{j}(t)$, $j=1,2$, then system \eqref{onefulleq} reads
\begin{align}\label{geq}
	\dot{x}(t) = L(\alpha) X_{t}+F(X_{t},\alpha),
\end{align}
where $\alpha=(\alpha_{1},\alpha_{2})$, $x=(x_{1},x_{2})^{T}$, $X_{t} \in {\mathcal C} \equiv C([-1,0],\mathbb{R}^{2})$, $X_{t}(\theta)=x(t+\theta)$ for $-1 \leq \theta \leq 0$, $L(\alpha) : {\mathcal C} \rightarrow \mathbb{R}^{2}$ and $F(\cdot,\alpha) : {\mathcal C} \rightarrow \mathbb{R}^{2}$ are given by
\begin{align*}
	L(\alpha)X_{t}=(\tau_{0}+\alpha_{2}) \left( M(\alpha) X_{t}(0) + N(\alpha) X_{t}(-1) \right) , \quad 
	M(\alpha)=
	\begin{pmatrix}
		0  & 1           \\
		-1 & \varepsilon_{0}+\alpha_{1} \\
	\end{pmatrix},
	\quad
	N(\alpha)=
	\begin{pmatrix}
		0                             & 0 \\
		a(\varepsilon_{0}+\alpha_{1}) & 0 \\
	\end{pmatrix},
\end{align*}
\begin{align*}
	F(X_{t},\alpha)=&(\tau_{0}+\alpha_{2})(\varepsilon_{0}+\alpha_{1})
	\begin{pmatrix}
		0 \\
		-x_{1}^{2}(t)x_{2}(t)+f(x_{1}(t-1))-a x_{1}(t-1)\\
	\end{pmatrix}\\
	=&(\tau_{0}+\alpha_{2})(\varepsilon_{0}+\alpha_{1})
	\begin{pmatrix}
		0 \\
		-x_{1}^{2}(t)x_{2}(t)+b x_{1}^{3}(t-1)+c x_{1}^{4}(t-1)+d x_{1}^{5}(t-1)+{\rm h.o.t.}\\
	\end{pmatrix}.
\end{align*}
According to Theorem \ref{theorem1}, the operator $L(0)$ has two pairs of simple imaginary eigenvalues $\pm{\rm i}\tau_{0}\omega_{1}$ and $\pm{\rm i}\tau_{0}\omega_{2}$ under assumption \eqref{H0}.
Considering the linear equation of \eqref{geq} at the bifurcation point $\alpha=0$
\begin{align*}
	\dot{x}(t) = L(0) X_{t},
\end{align*}
its solution operator is a strongly continuous semigroup on ${\mathcal C}$ with the infinitesimal generator $U_{0}$ given by
\begin{align*}
	U_{0} \phi = \frac{d \phi}{d \theta}, \quad
	{\mathscr D}(U_{0})=\left\lbrace \phi \in {\mathcal C} : \frac{d \phi}{d \theta} \in {\mathcal C},
	\left. \frac{d \phi}{d \theta}\right|_{\theta=0}=L(0) \phi \right\rbrace
\end{align*}
and
\begin{align*}
	\Lambda_{0}:=\left\lbrace \pm {\rm i} \tau_{0} \omega_{1}, \pm {\rm i} \tau_{0} \omega_{2} \right\rbrace \subset \sigma(U_{0}).
\end{align*}
Let ${\mathcal C}^{\textasteriskcentered}:=C([0,1],\mathbb{R}^{2^{\textasteriskcentered}})$, where $\mathbb{R}^{2^{\textasteriskcentered}}$ is the 2-dimensional row-vector space.
Define a bilinear form $\langle \cdot,\cdot \rangle$ on ${\mathcal C}^{\textasteriskcentered} \times {\mathcal C}$ by
\begin{align*}
	\langle \psi,\phi \rangle = \psi(0) \phi(0) + \tau_{0} \int_{0}^{1} \psi(s) N(0) \phi(s-1)ds, \quad 
	\psi \in {\mathcal C}^{\textasteriskcentered}, \quad
	\phi \in {\mathcal C},
\end{align*}
and the formal adjoint operator $U_{0}^{\ast}$ of $U_{0}$ by
\begin{align*}
	\left\langle \psi, U_{0} \phi \right\rangle = \left\langle U_{0}^{\ast} \psi, \phi \right\rangle, \quad \phi \in \mathscr{D}(U_{0}), \quad \psi \in \mathscr{D}(U_{0}^{\ast}).
\end{align*}
Let $\Phi(\theta)=(\phi_{1}(\theta),\cdots,\phi_{4}(\theta))$ be the base of the generalized eigenspace ${\mathscr M}_{\Lambda_{0}}(U_{0})$, and $\Psi(s) =(\psi_{1}(s),\cdots,\psi_{4}(s))^{T}$ be the adjoint base satisfying $\langle \psi_i, \phi_j \rangle = \delta_{ij}$.
The phase space ${\mathcal C}$ is decomposed by $\Lambda_{0}$ as ${\mathcal C}=P \oplus Q$, where
\begin{align*}
	P&={\rm span} \left\lbrace \phi_{1},\cdots,\phi_{4} \right\rbrace,\\
	Q&=\left\lbrace \phi \in {\mathcal C}: \langle \psi,\phi \rangle = 0 \text{ for all } \psi \in {\rm span}\{\psi_{1},\cdots,\psi_{4}\}    \right\rbrace.
\end{align*}
Thus, each $\phi \in {\mathcal C}$ can be expressed as
\begin{align*}
	\phi=\Phi \langle \Psi, \phi \rangle +\phi^{Q}, \quad \phi^{Q} \in Q.
\end{align*}
The solution of \eqref{geq} is decomposed as
\begin{align*}
	X_{t}(\theta)=\Phi(\theta) z(t)+v_{t}(\theta), \quad v_{t} \in Q
\end{align*}
where $z=(z_{1},z_{2},z_{3},z_{4})^{T}$, and we can rewrite \eqref{geq} as
\begin{align}\label{deeq}
	\left\{
	\begin{array}{ll}
		\dot{z}=J z + \Psi(0) G(\Phi z + v_{t},\alpha),\\
		\frac{d}{dt} v_{t} = U_{Q} v_{t} + (I-\pi) K_{0} G(\Phi z+v_{t},\alpha),
	\end{array}\right.
\end{align}
where $J$ satisfies $U_{0} \Phi = \Phi J$ and $U_{Q} = \left. U_{0} \right|_{Q}$,
\begin{align*}
	G(X_{t},\alpha)=(L(\alpha)-L(0))X_{t}+F(X_{t},\alpha),
\end{align*}
\begin{align*}
	(I-\pi)K_{0}(\theta)=\left\{
	\begin{array}{ll}
		- \Phi(\theta) \Psi(0), & \theta \in \left[ -1,0 \right),\\
		E - \Phi(0) \Psi(0), & \theta=0,
	\end{array}
	\right.
\qquad
	K_{0}(\theta)=\left\{
	\begin{array}{ll}
		0, & \theta \in \left[ -1,0 \right),\\
		E, & \theta=0,
	\end{array}
	\right.
\end{align*}
and $E$ is the $2 \times 2$ identity matrix.
If we take $\phi_{1}$ and $\phi_{3}$ such that
\begin{align*}
	U_{0} \phi_{1} = {\rm i} \tau_{0} \omega_{1} \phi_{1}, \quad 
	U_{0} \phi_{3} = {\rm i} \tau_{0} \omega_{2} \phi_{3}
\end{align*}
and $\phi_{2}=\overline{\phi_{1}}$, $\phi_{4}=\overline{\phi_{3}}$, then $z_{2}=\overline{z_{1}}$, $z_{4}=\overline{z_{3}}$ and
\begin{align*}
	J={\rm diag} \left\lbrace  {\rm i} \tau_{0} \omega_{1}, -{\rm i}\tau_{0}\omega_{1}, 
	{\rm i} \tau_{0} \omega_{2}, -{\rm i} \tau_{0} \omega_{2} \right\rbrace.
\end{align*}
For example, we may take
\begin{align*}
	&\phi_{1}=(1,{\rm i} \omega_{1})^{T}e^{{\rm i} \tau_{0} \omega_{1} \theta}, \qquad
	\psi_{1}=D_{1} (-\varepsilon_{0} + {\rm i} \omega_{1},1) e^{-{\rm i} \tau_{0} \omega_{1} s},\\
	&\phi_{3}=(1,{\rm i} \omega_{2})^{T}e^{{\rm i} \tau_{0} \omega_{2} \theta}, \qquad
	\psi_{3}=D_{2} (-\varepsilon_{0} + {\rm i} \omega_{2},1) e^{-{\rm i} \tau_{0} \omega_{2} s},\\
	&D_{j}=(\tau_{0}-\varepsilon_{0}-\tau_{0}\omega_{j}^{2}+{\rm i}\omega_{j}(2-\varepsilon_{0}\tau_{0}))^{-1}, \quad j=1,2,
\end{align*}
which are desired.
\par
The restriction of system \eqref{deeq} on the center manifold is given by 
\begin{align}\label{cen-mani-eq}
	\dot{z}=J z + g(z,\alpha)
\end{align}
for sufficiently small $\left\| \alpha \right\| $, where $g(z,\alpha)=\Psi(0)G(\Phi z + W(z,\theta),\alpha)$ and the center manifold $v_{t}=W(z,\theta)$ satisfies that
\begin{align}\label{cen-mani-ode}
	D_{z} W(z,\theta) J z - U_{Q} W(z,\theta) = (K_{0}-\Phi(\theta)\Psi(0)-D_{z} W(z,\theta)\Psi(0)) G(\Phi z +W(z,\theta),\alpha).
\end{align}
Some exact expressions of $W(z,\theta)$ and $g(z,\alpha)$ are listed in Appendix A.2.
\par
As
\begin{align*}
	k \omega_{1} - l \omega_{2} \neq 0, \quad k,l\in {\mathbb Z}^{+}, \quad
	1 \leq k+l \leq 6,
\end{align*}
it follows from the normal form method that there is an invertible parameter-dependent change of complex coordinate $z=\hat{z}+\sum_{2 \leq j+k+l+m \leq 5}\hat{z}_{1}^{j}\hat{z}_{2}^{k}\hat{z}_{3}^{l}\hat{z}_{4}^{m}$, such that we can normalize system \eqref{cen-mani-eq} for all sufficiently small $\left\| \alpha \right\| $ up to the fifth order terms.
For simplicity of notation, we will drop the hats, and the normalized system \eqref{cen-mani-eq} reads
\begin{align}\label{nf}
	\left\{
	\begin{array}{ll}
		\dot{z}_{1}=&\left( \xi_{1}(\alpha)+{\rm i} \tau_{0} \omega_{1} \right)  z_{1} + a_{11}(\alpha) z_{1}^{2} \bar{z}_{1} + a_{12}(\alpha) z_{1} z_{3} \bar{z}_{3}\\
		& + A_{11}(\alpha) z_{1}^{3} \bar{z}_{1}^{2} + A_{12}(\alpha) z_{1}^{2} \bar{z}_{1} z_{3} \bar{z}_{3} + A_{13}(\alpha) z_{1} z_{3}^{2} \bar{z}_{3}^{2} + O(\left\| z \right\| ^{6}),\\
		\dot{z}_{3}=&\left( \xi_{2}(\alpha)+{\rm i} \tau_{0} \omega_{2} \right) z_{3} + a_{21}(\alpha) z_{1} \bar{z}_{1} z_{3} + a_{22}(\alpha) z_{3}^{2} \bar{z}_{3}\\
		& + A_{21}(\alpha) z_{1}^{2} \bar{z}_{1}^{2} z_{3} + A_{22}(\alpha)z_{1} \bar{z}_{1} z_{3}^{2} \bar{z}_{3} + A_{23}(\alpha) z_{3}^{3} \bar{z}_{3}^{2} + O(\left\| z \right\| ^{6}),
	\end{array}
	\right.
\end{align}
where 
\begin{align*}
	&\xi_{1}(\alpha)=D_{1} \tau_{0} \left( a e^{-{\rm i} \tau_{0} \omega_{1}} + {\rm i} \omega_{1} \right) \alpha_{1}+D_{1} \left( a \varepsilon_{0} e^{-{\rm i} \tau_{0} \omega_{1}} - \omega_{1}^{2}-1 \right) \alpha_{2}+O(\left\| \alpha \right\| ^{2}),\\
	&\xi_{2}(\alpha)=D_{2} \tau_{0} \left( a e^{-{\rm i} \tau_{0} \omega_{2}} + {\rm i} \omega_{2} \right) \alpha_{1}+D_{2} \left( a \varepsilon_{0} e^{-{\rm i} \tau_{0} \omega_{2}} - \omega_{2}^{2}-1 \right) \alpha_{2}+O(\left\| \alpha \right\| ^{2}),\\
	&a_{11}(0)=D_{1} \tau_{0} \varepsilon_{0} \left( 3b e^{-{\rm i} \tau_{0} \omega_{1}}-{\rm i} \omega _{1} \right), \qquad a_{12}(0)=2a_{11}(0),\\
	&a_{22}(0)=D_{2} \tau_{0} \varepsilon_{0} \left( 3b e^{-{\rm i} \tau_{0} \omega_{2}}-{\rm i} \omega _{2} \right), \qquad a_{21}(0)=2a_{22}(0)
\end{align*}
and the calculation of $A_{jl}$ may be found in bifurcation textbooks and papers, see, e.g., \cite{L11}.
We give the value of $A_{jl}$ in Appendix A.3 to discuss the existence of quasi-periodic invariant 3-tori.
By writing $z$ in polar coordinates $(r,\varphi)$, system \eqref{nf} becomes 
\begin{align}\label{r-nf}
	\left\{
	\begin{array}{ll}
		\dot{r}_{1}=r_{1}\left( \delta_{1}(\alpha)+p_{11}(\alpha)r_{1}^{2}+p_{12}(\alpha)r_{2}^{2}
		+P_{11}(\alpha)r_{1}^{4}+P_{12}(\alpha)r_{1}^{2}r_{2}^{2}+P_{13}(\alpha)r_{2}^{4} \right)
		+g_{01}(r,\varphi,\alpha),\\
		\dot{r}_{2}=r_{2}\left( \delta_{2}(\alpha)+p_{21}(\alpha)r_{1}^{2}+p_{22}(\alpha)r_{2}^{2}
		+P_{21}(\alpha)r_{1}^{4}+P_{22}(\alpha)r_{1}^{2}r_{2}^{2}+P_{23}(\alpha)r_{2}^{4} \right)
		+g_{02}(r,\varphi,\alpha),\\
		\dot{\varphi}_{1}=\beta_1(\alpha)+q_{11}(\alpha)r_{1}^{2}+q_{12}(\alpha)r_{2}^{2}
		+Q_{11}(\alpha)r_{1}^{4}+Q_{12}(\alpha)r_{1}^{2}r_{2}^{2}+Q_{13}(\alpha)r_{2}^{4}
		+\frac{1}{r_{1}}g_{03}(r,\varphi,\alpha),\\
		\dot{\varphi}_{2}=\beta_2(\alpha)+q_{21}(\alpha)r_{1}^{2}+q_{22}(\alpha)r_{2}^{2}
		+Q_{21}(\alpha)r_{1}^{4}+Q_{22}(\alpha)r_{1}^{2}r_{2}^{2}+Q_{23}(\alpha)r_{2}^{4}
		+\frac{1}{r_{2}}g_{04}(r,\varphi,\alpha),
	\end{array}
	\right.
\end{align}
where $\delta_{j}={\rm Re}\left\lbrace \xi_{j}\right\rbrace$, $\beta_{j}={\rm Im}\left\lbrace \xi_{j}\right\rbrace+\tau_{0} \omega_{j}$, $p_{jk}={\rm Re}\left\lbrace a_{jk}\right\rbrace$, $q_{jk}={\rm Im}\left\lbrace a_{jk}\right\rbrace$, $P_{jl}={\rm Re}\left\lbrace A_{jl}\right\rbrace$, $Q_{jl}={\rm Im}\left\lbrace A_{jl}\right\rbrace$, $j,k=1,2$, $l=1,2,3$, and $g_{0i}=O(\left\| r \right\| ^{6})$, $i=1,2,3,4$ are $2\pi$-periodic in $\varphi$.
\par
If
\begin{align*}
	\det \left.  \frac{\partial(\delta_{1},\delta_{2})}{\partial(\alpha_{1},\alpha_{2})} \right| _{\alpha=0} =\tau_{0}^{2}\Delta(\varepsilon_{0}, \tau_{0}) \neq 0,
\end{align*}
that is
\begin{align}\label{H1}
	2 a^{2} \tau_{0}+a^2 \tau _0 \varepsilon _0^2-2 a^2 \varepsilon _0-2 \tau _0+\varepsilon _0 \neq 0
\end{align}
holds for system \eqref{r-nf}, then the map $(\alpha_{1},\alpha_{2}) \mapsto (\delta_{1},\delta_{2})$ is regular at $\alpha=0$. 
In other words, we can parametrize a small neighborhood of the origin of the parameter plane $(\alpha_{1},\alpha_{2})$ by $(\delta_{1},\delta_{2})$, and consider $\beta_{j}$, $p_{jk}$, $q_{jk}$, $P_{jl}$, $Q_{jl}$, $g_{0i}$ as the functions in the parameter $\delta=(\delta_{1},\delta_{2})$ in place of $\alpha$.
The previous discussions lead to the following result.
\begin{theorem}\label{theorem2}
	For any fixed $(\varepsilon_{0},\tau_{0})$ with the corresponding eigenvalues $\pm {\rm i} \omega_{1}$, $\pm {\rm i} \omega_{2}$ satisfying \eqref{ceq}, if assumptions \eqref{H0} and \eqref{H1} hold, then system \eqref{fulleq} with $(\varepsilon,\tau)=(\varepsilon_{0}+\alpha_{1},\tau_{0}+\alpha_{2})$ is locally smoothly orbitally equivalent near the origin to 
	\begin{align}\label{r-nf-done}
		\left\{
		\begin{array}{ll}
			\dot{r}_{1}=r_{1}\left( \delta_{1}+p_{11}(\delta)r_{1}^{2}+p_{12}(\delta)r_{2}^{2}
			+P_{11}(\delta)r_{1}^{4}+P_{12}(\delta)r_{1}^{2}r_{2}^{2}+P_{13}(\delta)r_{2}^{4} \right)
			+g_{01}(r,\varphi,\delta),\\
			\dot{r}_{2}=r_{2}\left( \delta_{2}+p_{21}(\delta)r_{1}^{2}+p_{22}(\delta)r_{2}^{2}
			+P_{21}(\delta)r_{1}^{4}+P_{22}(\delta)r_{1}^{2}r_{2}^{2}+P_{23}(\delta)r_{2}^{4} \right)
			+g_{02}(r,\varphi,\delta),\\
			\dot{\varphi}_{1}=\beta_1(\delta)+q_{11}(\delta)r_{1}^{2}+q_{12}(\delta)r_{2}^{2}
			+Q_{11}(\delta)r_{1}^{4}+Q_{12}(\delta)r_{1}^{2}r_{2}^{2}+Q_{13}(\delta)r_{2}^{4}
			+\frac{1}{r_{1}}g_{03}(r,\varphi,\delta),\\
			\dot{\varphi}_{2}=\beta_2(\delta)+q_{21}(\delta)r_{1}^{2}+q_{22}(\delta)r_{2}^{2}
			+Q_{21}(\delta)r_{1}^{4}+Q_{22}(\delta)r_{1}^{2}r_{2}^{2}+Q_{23}(\delta)r_{2}^{4}
			+\frac{1}{r_{2}}g_{04}(r,\varphi,\delta),
		\end{array}
		\right.
	\end{align}
	where 
	\begin{align*}
		\beta_{j}(\delta):=\beta_{j}(\alpha(\delta))=\tau_{0}\omega_{j}+\beta_{j1}\delta_{1}+\beta_{j2}\delta_{2}+O(\left\| \delta \right\| ^{2}),
	\end{align*}
	$p_{jk}(\delta):=p_{jk}(\alpha(\delta))$, $q_{jk}(\delta):=q_{jk}(\alpha(\delta))$, $P_{jl}(\delta):=P_{jl}(\alpha(\delta))$, $Q_{jl}(\delta):=Q_{jl}(\alpha(\delta))$, $j,k=1,2$, $l=1,2,3$,
	$g_{0i}=O(\left\| r \right\| ^{6})$, $i=1,2,3,4$,
	\begin{equation}\label{ex}
		\begin{aligned}
			\left(
			\begin{array}{cc}
				\beta_{11} & \beta_{12} \\
				\beta_{21} & \beta_{22} \\
			\end{array}
			\right)
			=&\left(
			\begin{array}{cc}
				\frac{\tau_{0}  \omega _1 \left(\omega _1^2-1\right) (2-\tau_{0}  \varepsilon_{0} )}{\varepsilon_{0}  \left(\omega _1^2 (2-\tau_{0}  \varepsilon_{0} )^2+\left(-\tau_{0}  \omega _1^2+\tau_{0} -\varepsilon_{0} \right){}^2\right)} & \frac{\omega _1 \left(4 \omega _1^2+\varepsilon_{0} ^2-\tau_{0}  \omega _1^2 \varepsilon_{0} -\tau_{0}  \varepsilon_{0} \right)}{\omega _1^2 (2-\tau_{0}  \varepsilon_{0} )^2+\left(-\tau_{0}  \omega _1^2+\tau_{0} -\varepsilon_{0} \right){}^2} \\
				\frac{\tau_{0}  \omega _2 \left(\omega _2^2-1\right) (2-\tau_{0}  \varepsilon_{0} )}{\varepsilon_{0}  \left(\omega _2^2 (2-\tau_{0}  \varepsilon_{0} )^2+\left(-\tau_{0}  \omega _2^2+\tau_{0} -\varepsilon_{0} \right){}^2\right)} & \frac{\omega _2 \left(4 \omega _2^2+\varepsilon_{0} ^2-\tau_{0}  \omega _2^2 \varepsilon_{0} -\tau_{0}  \varepsilon_{0} \right)}{\omega _2^2 (2-\tau_{0}  \varepsilon_{0} )^2+\left(-\tau_{0}  \omega _2^2+\tau_{0} -\varepsilon_{0} \right){}^2} \\
			\end{array}
			\right)
			\\
			&\times \left(
			\begin{array}{cc}
				\frac{\tau_{0}  \left(\omega _1^2-1\right) \left(\tau_{0}  \omega _1^2-\tau_{0} +\varepsilon_{0} \right)}{\varepsilon_{0}  \left(\omega _1^2 (2-\tau_{0}  \varepsilon_{0} )^2+\left(-\tau_{0}  \omega _1^2+\tau_{0} -\varepsilon_{0} \right){}^2\right)} & \frac{\tau_{0}  \omega _1^2 \left(2 \omega _1^2+\varepsilon_{0} ^2-2\right)}{\omega _1^2 (2-\tau_{0}  \varepsilon_{0} )^2+\left(-\tau_{0}  \omega _1^2+\tau_{0} -\varepsilon_{0} \right){}^2} \\
				\frac{\tau_{0}  \left(\omega _2^2-1\right) \left(\tau_{0}  \omega _2^2-\tau_{0} +\varepsilon_{0} \right)}{\varepsilon_{0}  \left(\omega _2^2 (2-\tau_{0}  \varepsilon_{0} )^2+\left(-\tau_{0}  \omega _2^2+\tau_{0} -\varepsilon_{0} \right){}^2\right)} & \frac{\tau_{0}  \omega _2^2 \left(2 \omega _2^2+\varepsilon_{0} ^2-2\right)}{\omega _2^2 (2-\tau_{0}  \varepsilon_{0} )^2+\left(-\tau_{0}  \omega _2^2+\tau_{0} -\varepsilon_{0} \right){}^2} \\
			\end{array}
			\right)^{-1},\\
			p_{jj}(0)=&\frac{\tau_{0}  \varepsilon_{0}  \left(3 a^2 b \tau_{0}  \varepsilon_{0} -2 a \omega _j^2+a \tau_{0}  \omega _j^2 \varepsilon_{0} -3 b \omega _j^2-3 b\right)}{a \left(\omega _j^2 (2-\tau_{0}  \varepsilon_{0} )^2+\left(-\tau_{0}  \omega _j^2+\tau_{0} -\varepsilon_{0} \right){}^2\right)}, \quad p_{12}(0)=2p_{11}(0), \quad p_{21}(0)=2p_{22}(0),\\
			q_{jj}(0)=&\frac{\tau_{0}  \omega _j \left(a \varepsilon_{0} ^2+a \tau_{0}  \omega _j^2 \varepsilon_{0} -a \tau_{0}  \varepsilon_{0} +6 b \omega _j^2+3 b \varepsilon_{0} ^2-6 b\right)}{a \left(\omega _j^2 (2-\tau_{0}  \varepsilon_{0} )^2+\left(-\tau_{0}  \omega _j^2+\tau_{0} -\varepsilon_{0} \right){}^2\right)}, \quad q_{12}(0)=2q_{11}(0), \quad q_{21}(0)=2q_{22}(0).
		\end{aligned}
	\end{equation}
\end{theorem}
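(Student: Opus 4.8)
Since the theorem collects the constructions carried out above, the plan is to arrange them as a chain of reductions and then record the coefficients. First I would invoke the center manifold reduction: as $L(0)$ has precisely the four simple imaginary eigenvalues of $\Lambda_0$ and the rest of its spectrum lies strictly in the open left half plane, equations \eqref{deeq}--\eqref{cen-mani-ode} show that near $\alpha=0$ the dynamics of \eqref{geq} on the local center manifold $v_t=W(z,\theta)$ is governed by the four-dimensional system \eqref{cen-mani-eq}; combined with the initial time rescaling $t\mapsto\tau t$, this yields a smooth local orbital equivalence between \eqref{fulleq} near the origin and \eqref{cen-mani-eq}. Next, by Lemma \ref{lemma2} and the remark following it, $\omega_1$ and $\omega_2$ are nonresonant up to order $6$, so the only resonant monomials of degree $\le 5$ are those displayed in \eqref{nf}; hence a parameter-dependent near-identity polynomial change of coordinates $z=\hat z+\sum\hat z_1^j\hat z_2^k\hat z_3^l\hat z_4^m$ brings \eqref{cen-mani-eq} to \eqref{nf} for all small $\|\alpha\|$. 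Writing (after dropping hats) $z_1=r_1e^{{\rm i}\varphi_1}$, $z_3=r_2e^{{\rm i}\varphi_2}$ with $z_2=\bar z_1$, $z_4=\bar z_3$, and separating real and imaginary parts, yields \eqref{r-nf} with $\delta_j={\rm Re}\,\xi_j$, $\beta_j={\rm Im}\,\xi_j+\tau_0\omega_j$, $p_{jk}={\rm Re}\,a_{jk}$, $q_{jk}={\rm Im}\,a_{jk}$, $P_{jl}={\rm Re}\,A_{jl}$, $Q_{jl}={\rm Im}\,A_{jl}$, and $g_{0i}=O(\|r\|^6)$.

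Finally, assumption \eqref{H1} states exactly that the Jacobian $\partial(\delta_1,\delta_2)/\partial(\alpha_1,\alpha_2)$ at $\alpha=0$, which equals $\tau_0^2\Delta(\varepsilon_0,\tau_0)$, does not vanish; by the inverse function theorem $\alpha\mapsto\delta$ is then a local diffeomorphism fixing the origin, and substituting $\alpha=\alpha(\delta)$ reparametrizes \eqref{r-nf} as \eqref{r-nf-done}. This establishes the orbital equivalence, and it remains to compute the coefficients.

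For the explicit formulas I would proceed as follows. The linear-in-$\alpha$ part of the normal form comes from projecting $(L(\alpha)-L(0))\Phi z$ onto the center eigenspace, i.e. from $\Psi(0)(L(\alpha)-L(0))\Phi z$; using the chosen $\phi_{1,3}$ and $\psi_{1,3}$ and keeping first order in $\alpha$ gives the stated expansions of $\xi_1,\xi_2$. Taking real parts yields $\partial(\delta_1,\delta_2)/\partial(\alpha_1,\alpha_2)|_{\alpha=0}$ as an explicit $2\times2$ matrix (after rewriting $ae^{-{\rm i}\tau_0\omega_j}$ through \eqref{ceqs1}), and taking imaginary parts yields $\partial(\beta_1,\beta_2)/\partial(\alpha_1,\alpha_2)|_{\alpha=0}$; the chain rule $\partial\beta_j/\partial\delta_k=\sum_l(\partial\beta_j/\partial\alpha_l)(\partial\alpha_l/\partial\delta_k)$ together with $(\partial\alpha/\partial\delta)=(\partial\delta/\partial\alpha)^{-1}$ then produces the matrix identity \eqref{ex} for $(\beta_{jk})$. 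The cubic coefficients $a_{11}(0),a_{22}(0)$ come from projecting the cubic part of $F(X_t,0)$, namely the terms $-x_1^2x_2$ and $bx_1^3(t-1)$, onto the resonant monomials $z_1^2\bar z_1$ and $z_3^2\bar z_3$; taking real and imaginary parts and simplifying via \eqref{ceqs1} produces $p_{jj}(0),q_{jj}(0)$, while the relations $p_{12}=2p_{11}$, $p_{21}=2p_{22}$, $q_{12}=2q_{11}$, $q_{21}=2q_{22}$ are inherited directly from $a_{12}=2a_{11}$, $a_{21}=2a_{22}$, which themselves reflect the structure of the cubic nonlinearity.

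I expect the main obstacle to be purely computational: the quintic coefficients $A_{jl}$ (needed later for the 3-torus analysis and recorded in Appendix A.3) require the explicit center-manifold expansion $W(z,\theta)$ and careful tracking of many terms, and reducing the linear and cubic coefficients to the compact forms in \eqref{ex} leans repeatedly on the identities \eqref{ceqs1}--\eqref{ceqs2}. One conceptual point worth verifying is that the near-identity normal form transformation can be chosen smooth in $\alpha$ uniformly on a full neighborhood of $\alpha=0$; this holds because the nonresonance in Lemma \ref{lemma2} is an open condition, so the relevant small divisors stay bounded away from $0$ for $\|\alpha\|$ small.
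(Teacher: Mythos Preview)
Your proposal is correct and follows essentially the same chain of reductions as the paper: center manifold reduction via the Hale decomposition \eqref{deeq}--\eqref{cen-mani-ode}, normalization to \eqref{nf} using the nonresonance of Lemma~\ref{lemma2}, passage to polar coordinates \eqref{r-nf}, and reparametrization $\alpha\mapsto\delta$ via assumption \eqref{H1}; your computational outline for $\xi_j$, $a_{jk}$ and the chain-rule derivation of $(\beta_{jk})$ also matches the paper's route. One small point: you assert that the rest of the spectrum of $L(0)$ lies in the open left half plane, but the paper neither claims nor needs this---what is used (and what Lemma~\ref{lemma1} provides) is only that $\Lambda_0$ exhausts the purely imaginary spectrum, so the center manifold reduction applies; the phrase ``orbitally equivalent near the origin'' in the theorem is to be read as equivalence of the dynamics restricted to the local center manifold.
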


\subsection{Double Hopf bifurcation analysis}\label{sec2.3}
In this subsection, we discuss the existence of quasi-periodic invariant tori for the truncated normal form
\begin{align}\label{t-nf}
	\left\{
	\begin{array}{ll}
		\dot{r}_{1}=r_{1}\left( \delta_{1}+p_{11}(\delta)r_{1}^{2}+p_{12}(\delta)r_{2}^{2}
		+P_{11}(\delta)r_{1}^{4}+P_{12}(\delta)r_{1}^{2}r_{2}^{2}+P_{13}(\delta)r_{2}^{4} \right),\\
		\dot{r}_{2}=r_{2}\left( \delta_{2}+p_{21}(\delta)r_{1}^{2}+p_{22}(\delta)r_{2}^{2}
		+P_{21}(\delta)r_{1}^{4}+P_{22}(\delta)r_{1}^{2}r_{2}^{2}+P_{23}(\delta)r_{2}^{4} \right),\\
		\dot{\varphi}_{1}=\beta_1(\delta),\\
		\dot{\varphi}_{2}=\beta_2(\delta).
	\end{array}
	\right.
\end{align}
Let
\begin{align}\label{T0}
	\rho_{i}=r_{i}^{2}, \quad i=1,2.
\end{align}
Since the first pair of \eqref{t-nf} is independent of the second pair, the existence of invariant tori of \eqref{t-nf} can be studied via the truncated amplitude system
\begin{align}\label{t-a-nf}
	\left\{
	\begin{array}{ll}
		\dot{\rho}_{1}=2 \rho_{1} \left( \delta_{1} +p_{11}(\delta)\rho_{1} +p_{12}(\delta)\rho_{2} +P_{11}(\delta)\rho_{1}^{2} +P_{12}(\delta)\rho_{1}\rho_{2} +P_{13}(\delta)\rho_{2}^{2} \right),\\
		\dot{\rho}_{2}=2 \rho_{2} \left( \delta_{2} +p_{21}(\delta)\rho_{1} +p_{22}(\delta)\rho_{2} +P_{21}(\delta)\rho_{1}^{2} +P_{22}(\delta)\rho_{1}\rho_{2} +P_{23}(\delta)\rho_{2}^{2} \right).
	\end{array}
	\right.
\end{align}
Assume 
\begin{align*}
	p_{jk}(0) \neq 0, \quad j,k=1,2, \qquad \det \left( p_{jk}(0) \right) \neq 0,
\end{align*}
that is
\begin{align}\label{H3}
	3a^{2}b\varepsilon_{0}\tau_{0}-3b+(a\varepsilon_{0}\tau_{0}-2a-3b)\omega_{1,2}^{2} \neq 0.
\end{align}
If assumption \eqref{H3} holds, it follows from \cite{L11} that there are two bifurcation cases depending on whether $p_{11}(0)$ and $p_{22}(0)$ have the same or opposite signs: simple case $p_{11}(0)p_{22}(0)>0$ and difficult case $p_{11}(0)p_{22}(0)<0$.

\par
For the simple case $p_{11}(0)p_{22}(0)>0$, where $p_{11}(0)$ and $p_{22}(0)$ are given in \eqref{ex}, we may consider without loss of generality that $p_{11}(0)<0$ and $p_{22}(0)<0$ (otherwise, reverse time in \eqref{t-a-nf}).
Introducing new phase variables in \eqref{t-a-nf} according to
\begin{align}\label{T1-2}
	\eta_{1}=-p_{11} \rho_{1}, \qquad \eta_{2}=-p_{22} \rho_{2},
\end{align}
we obtain 
\begin{align}\label{t-a-nf-1}
	\left\{
	\begin{array}{ll}
		\dot{\eta}_{1}=2 \eta_{1}\left( \delta_{1} -\eta_{1} -\gamma \eta_{2} +\Gamma_{1} \eta_{1}^{2} +\Gamma_{2} \eta_{1} \eta_{2} +\Gamma_{3} \eta_{2}^{2} \right),\\
		\dot{\eta}_{2}=2 \eta_{2}\left( \delta_{2} -\sigma \eta_{1} -\eta_{2} +\Sigma_{1} \eta_{1}^{2} +\Sigma_{2} \eta_{1} \eta_{2} +\Sigma_{3} \eta_{2}^{2} \right),
	\end{array}
	\right.
\end{align}
where
\begin{equation}\label{sign-1}
	\begin{aligned}
		\gamma=\frac{p_{12}}{p_{22}}, \qquad \Gamma_{1}=\frac{P_{11}}{p_{11}^{2}}, \qquad \Gamma_{2}=\frac{P_{12}}{p_{11}p_{22}}, \qquad \Gamma_{3}=\frac{P_{13}}{p_{22}^{2}},\\
		\sigma=\frac{p_{21}}{p_{11}}, \qquad \Sigma_{1}=\frac{P_{21}}{p_{11}^{2}}, \qquad \Sigma_{2}=\frac{P_{22}}{p_{11}p_{22}}, \qquad \Sigma_{3}=\frac{P_{23}}{p_{22}^{2}},
	\end{aligned}
\end{equation}
where $p_{ij}$, $P_{ik}$ are given in \eqref{ex}.
Using the fact that $p_{12}(0)=2p_{11}(0)$, $p_{21}(0)=2p_{22}(0)$, we have $\gamma(0)\sigma(0)=4$.
System \eqref{t-a-nf-1} has a trivial equilibrium $E_{0}=(0,0)^{T}$ for all $(\delta_{1},\delta_{2})$ and two trivial equilibria $E_{1}=(\delta_{1}+O(\left\| \delta \right\|^{2}),0)^{T}$ and $E_{2}=(0,\delta_{2}+O(\left\| \delta \right\|^{2}))^{T}$ bifurcating from the origin at the bifurcation curves $H_{1}=\left\lbrace (\delta_{1},\delta_{2}): \delta_{1}=O(\left\| \delta \right\|^{2}) \right\rbrace $ and $H_{2}=\left\lbrace (\delta_{1},\delta_{2}): \delta_{2}=O(\left\| \delta \right\|^{2}) \right\rbrace $, respectively. 
In addition to these three trivial equilibria, system \eqref{t-a-nf-1} also has a nontrivial equilibrium
\begin{align*}
	E_{3}=\left( -\frac{\delta_{1}-\gamma \delta_{2}}{\gamma \sigma -1}+O(\left\| \delta \right\|^{2}),\frac{\sigma \delta_{1}-\delta_{2}}{\gamma \sigma -1}+O(\left\| \delta \right\|^{2}) \right)^{T},
\end{align*}
disappearing from the positive quadrant on the bifurcation curves
\begin{align*}
	T_{1}=\left\lbrace (\delta_{1},\delta_{2}): \delta_{1}=\gamma \delta_{2}+O(\delta_{2}^{2}), \quad \delta_{2}>0 \right\rbrace 
\end{align*}
and
\begin{align*}
	T_{2}=\left\lbrace (\delta_{1},\delta_{2}): \delta_{2}=\sigma \delta_{1}+O(\delta_{1}^{2}), \quad \delta_{1}>0 \right\rbrace.
\end{align*}
The equilibrium $E_{3}$ corresponds to an invariant 2-torus of \eqref{t-nf}, and its stability is determined by the two eigenvalues of the following characteristic equation
\begin{align*}
	\lambda^{2}+2(E_{31}+E_{32})\lambda+4E_{31}E_{32}(1-\gamma\sigma)=0,
\end{align*}
where $E_{3i}$ is the $i$th component of $E_{3}$ $(i=1,2)$.
Since $\gamma\sigma(0)=4$, $E_{3}$ is a saddle.
\begin{remark}
	Let $a=1.2$, $b=-6$, $c=d=0$.
	When $\varepsilon_{0} \approx 0.2231$ and $\tau_{0} \approx 7.8628$, we obtain $p_{11}(0)\approx-7.2397$, $p_{22}(0)\approx-13.6504$, $\gamma(0) \approx 1.0607$, $\sigma(0) \approx 3.7710$, $\alpha_{1} \approx 0.1016 \delta_{1}+0.1252 \delta_{2}$, $\alpha_{2} \approx 0.3708 \delta_{1}-0.2511 \delta_{2}$.
	It is a simple case.
	Moreover, the bifurcation curves can obtained,
	\begin{align*}
		H_{1}&=\left\lbrace (\alpha_{1},\alpha_{2}): \alpha_{2}=-2.0058 \alpha_{1}+O(\alpha_{1}^{2}) \right\rbrace,\\
		H_{2}&=\left\lbrace (\alpha_{1},\alpha_{2}): \alpha_{2}=3.6503 \alpha_{1}+O(\alpha_{1}^{2}) \right\rbrace,\\
		T_{1}&=\left\lbrace (\alpha_{1},\alpha_{2}): \alpha_{2}=0.6108 \alpha_{1}+O(\alpha_{1}^{2}), \alpha_{1}>0 \right\rbrace,\\
		T_{2}&=\left\lbrace (\alpha_{1},\alpha_{2}): \alpha_{2}=-1.0041 \alpha_{1}+O(\alpha_{1}^{2}), \alpha_{1}>0 \right\rbrace.
	\end{align*}
	The parameter plane of $(\alpha_{1},\alpha_{2})$ around the double Hopf bifurcation point is divided into six regions as shown in Figure \ref{2-fzt}.
	When $(\alpha_{1},\alpha_{2})$ lies in region 1 in Figure \ref{2-fzt}, the zero solution of system \eqref{t-nf} is asymptotically stable.
	When $(\alpha_{1},\alpha_{2})$ lies in regions 2 and 5, the zero solution of system \eqref{t-nf} loses its stability and a stable periodic solution emerges.
	When $(\alpha_{1},\alpha_{2})$ lies in regions 3 and 6, the zero solution of system \eqref{t-nf} is also unstable, the stable and unstable periodic solutions coexist. 
	When $(\alpha_{1},\alpha_{2})$ lies in region 4, system \eqref{t-nf} has two stable periodic solutions and an unstable quasi-periodic solution.
\end{remark}
\begin{figure}
	\centering
	\includegraphics[width=150mm, height=60mm]{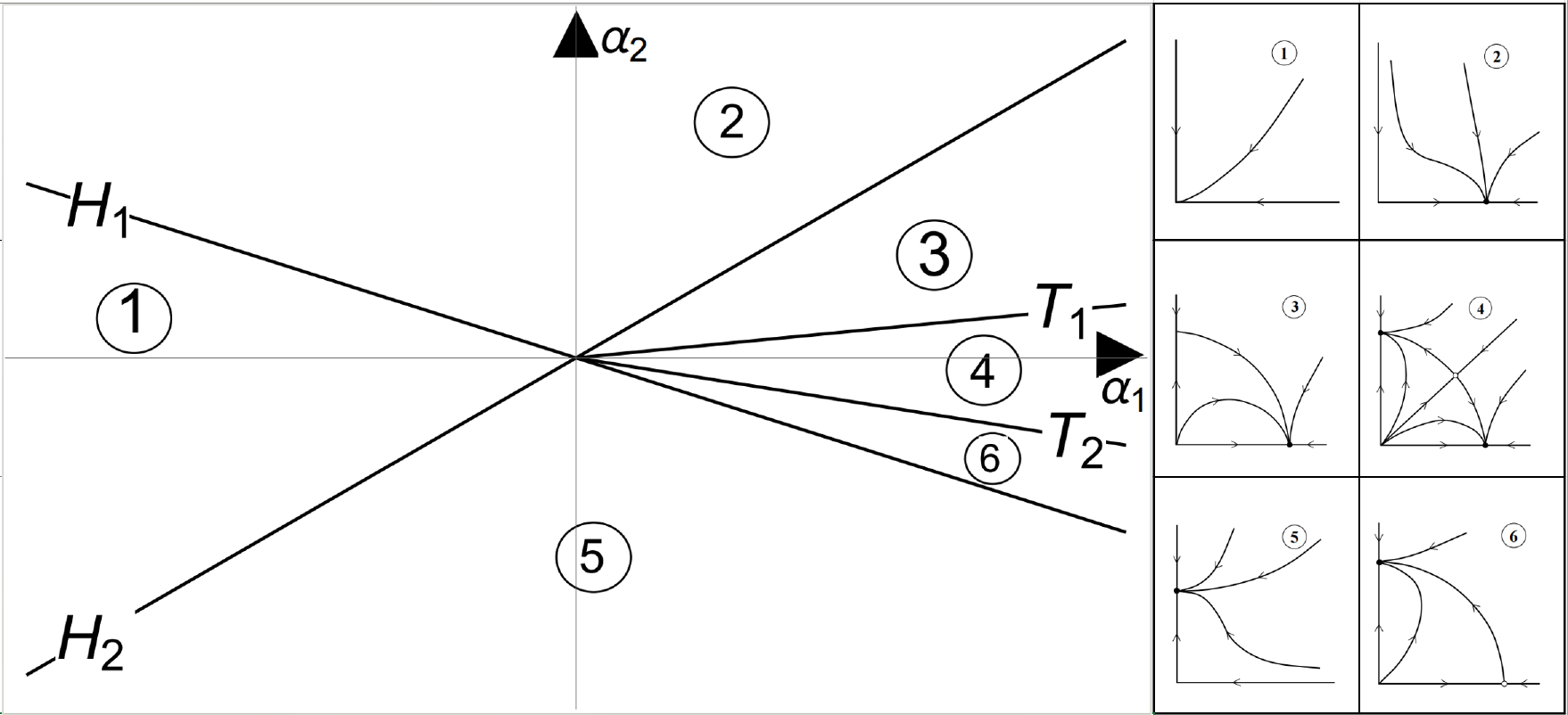}
	\caption{The bifurcation diagram and phase portrait of system \eqref{fulleq} with critical value $(\varepsilon_{0},\tau_{0}) \approx (0.2231,7.8628)$, where $a=1.2$, $b=-6$, $c=d=0$.}
	\label{2-fzt}
\end{figure}

\par
On the other hand, when
\begin{align}\label{HH0}
	p_{11}(0)p_{22}(0)<0,
\end{align}
similarly to the previous case, consider only $p_{11}(0)>0$, $p_{22}(0)<0$ (otherwise, exchange the subscripts in \eqref{t-a-nf}).
Introducing new phase variables
\begin{align}\label{T1-3}
	\eta_{1}=p_{11} \rho_{1}, \qquad \eta_{2}=-p_{22} \rho_{2},
\end{align}
the truncated amplitude system \eqref{t-a-nf} is written as
\begin{align}\label{t-a-nf-2}
	\left\{
	\begin{array}{ll}
		\dot{\eta}_{1}=2 \eta_{1}\left( \delta_{1} +\eta_{1} -\gamma \eta_{2} +\Gamma_{1} \eta_{1}^{2} -\Gamma_{2} \eta_{1} \eta_{2} +\Gamma_{3} \eta_{2}^{2} \right),\\
		\dot{\eta}_{2}=2 \eta_{2}\left( \delta_{2} +\sigma \eta_{1} -\eta_{2} +\Sigma_{1} \eta_{1}^{2} -\Sigma_{2} \eta_{1} \eta_{2} +\Sigma_{3} \eta_{2}^{2} \right),
	\end{array}
	\right.
\end{align}
where $\gamma$, $\sigma$, $\Gamma_{i}$, $\Sigma_{i}$ $(i=1,2,3)$ are the same as before.
System \eqref{t-a-nf-2} has three trivial equilibria $E_{0}=(0,0)^{T}$, $E_{1}=(-\delta_{1}+O(\left\| \delta \right\|^{2}),0)^{T}$, $E_{2}=(0,\delta_{2}+O(\left\| \delta \right\|^{2}))^{T}$ and a nontrivial equilibrium 
\begin{align*}
	E_{3}=\left( \frac{\delta_{1}-\gamma \delta_{2}}{\gamma \sigma -1}+O(\left\| \delta \right\|^{2}),\frac{\sigma \delta_{1}-\delta_{2}}{\gamma \sigma -1}+O(\left\| \delta \right\|^{2}) \right)^{T},
\end{align*}
whenever their coordinates are non-negative.
The characteristic equation of \eqref{t-a-nf-2} about the fixed point $E_{3}$ is
\begin{align*}
	\lambda^{2}+2(E_{32}-E_{31})\lambda+4E_{31}E_{32}(\gamma\sigma-1)=0,
\end{align*}
where $E_{3i}$ is the $i$th component of $E_{3}$ $(i=1,2)$.
Simple analysis produces that $E_{3}$ is a sink if $(1-\gamma)\delta_{2}<(\sigma-1)\delta_{1}$ and a source if $(1-\gamma)\delta_{2}>(\sigma-1)\delta_{1}$.
\par
Moreover, system \eqref{t-a-nf-2} may have a limit cycle arising from the Hopf bifurcation of $E_{3}$ and the Hopf bifurcation curve is
\begin{align*}
	H=\left\lbrace (\delta_{1},\delta_{2}): \delta_{2}=-\frac{\sigma-1}{\gamma-1}\delta_{1}+O(\delta_{1}^{2}), \quad \delta_{1}-\gamma \delta_{2}>0, \quad \sigma \delta_{1}-\delta_{2}>0 \right\rbrace 
\end{align*}
if
\begin{align}\label{HH78}
	\gamma(0) \neq 1, \qquad \sigma(0) \neq 1.
\end{align}
The value of the first Lyapunov coefficient $l_{1}$ along the Hopf curve $H$ near the origin, as the nondegeneracy condition, should be nonzero, that is
\begin{small}
\begin{align}\label{HH9}
	l_{10}:=(\frac{\sigma -1}{\gamma-1})\left( -4(\Gamma_{1}+\Gamma_{2}+\Sigma_{2}+\Sigma_{3}) +(7\Gamma_{1}+\Sigma_{1}+\Sigma_{2}-\gamma\Sigma_{1})\gamma +(\Gamma_{2}+\Gamma_{3}+7\Sigma_{3}-\sigma\Gamma_{3})\sigma \right)(0) \neq 0,
\end{align}
\end{small}
where $\gamma(0)$, $\sigma(0)$, $\Gamma_{i}(0)$, $\Sigma_{i}(0)$, $i=1,2,3$, in \eqref{HH78} and \eqref{HH9} are given in \eqref{sign-1}.
The stability of the limit cycle of system \eqref{t-a-nf-2} is determined by the sign of the first Lyapunov coefficient $l_{10}$.
If $l_{1}(0)<0$, $E_{3}$ is a sink when $(1-\gamma)\delta_{2}<(\sigma-1)\delta_{1}$;
$E_{3}$ is a source and a stable limit cycle bifurcate from $E_{3}$ when $(1-\gamma)\delta_{2}>(\sigma-1)\delta_{1}$.
If $l_{1}(0)>0$, $E_{3}$ is a source when $(1-\gamma)\delta_{2}>(\sigma-1)\delta_{1}$;
$E_{3}$ is a sink and a unstable limit cycle bifurcate from $E_{3}$ when $(1-\gamma)\delta_{2}<(\sigma-1)\delta_{1}$.
The equilibrium $E_{3}$ and the limit cycle of \eqref{t-a-nf-2} correspond to an invariant 2-torus and an invariant 3-torus of \eqref{t-nf}, respectively.
\begin{remark}
\begin{itemize}
	\item[(i)]Let $a=1$, $b=\frac{1}{7}$, $c=d=0$.
	When $\varepsilon_{0} \approx 0.2533$ and $\tau_{0}=2.5\pi$, we obtain $p_{11}(0)\approx -0.4837$, $p_{22}(0)\approx 0.3976$, $\gamma(0) \approx -2.4333$, $\sigma(0) \approx -1.6439$, $\alpha_{1} \approx 0.1207 \delta_{1}+0.1262 \delta_{2}$, $\alpha_{2} \approx 0.1276 \delta_{1}$, $l_{10}\approx-6591.28$.
	It is a difficult case, and the limit cycle bifurcating from $E_{3}$ is stable.
	Moreover, the bifurcation curves can obtained
	\begin{align*}
		H_{1}&=\left\lbrace (\alpha_{1},\alpha_{2}): \alpha_{2}=0+O(\alpha_{1}^{2}) \right\rbrace,\\
		H_{2}&=\left\lbrace (\alpha_{1},\alpha_{2}): \alpha_{2}=1.0569 \alpha_{1}+O(\alpha_{1}^{2}) \right\rbrace,\\ 
		T_{1}&=\left\lbrace (\alpha_{1},\alpha_{2}): \alpha_{2}=1.8526 \alpha_{1}+O(\alpha_{1}^{2}), \alpha_{1}<0 \right\rbrace,\\
		T_{2}&=\left\lbrace (\alpha_{1},\alpha_{2}): \alpha_{2}=-1.4716 \alpha_{1}+O(\alpha_{1}^{2}), \alpha_{1}>0 \right\rbrace,\\
		H&=\left\lbrace (\alpha_{1},\alpha_{2}): \alpha_{2}=5.4161 \alpha_{1}+O(\alpha_{1}^{2}), \alpha_{1}<0 \right\rbrace,\\
		Y&=\left\lbrace (\delta_{1},\delta_{2}): \delta_{2}=-0.7701 \delta_{1}-131.18 \delta_{1}^{2}+O(\delta_{1}^{3}),  \delta_{1}-\gamma \delta_{2}>0, \sigma \delta_{1}-\delta_{2}>0 \right\rbrace,
	\end{align*}
	where the curve $Y$ is a heteroclinic bifurcation via which the 3-torus disappears \cite{L11}.
	The parameter plane of $(\alpha_{1},\alpha_{2})$ around the double Hopf bifurcation point is divided into eight regions as shown in Figure \ref{3-fzt}.
	When $(\alpha_{1},\alpha_{2})$ lies in region 2 in Figure \ref{3-fzt}, the zero solution of system \eqref{t-nf} is unstable.
	When $(\alpha_{1},\alpha_{2})$ lies in regions 1 and 3, system \eqref{t-nf} has an unstable periodic solution bifurcating from the origin.
	When $(\alpha_{1},\alpha_{2})$ lies in region 4, system \eqref{t-nf} has two unstable periodic solutions.
	When $(\alpha_{1},\alpha_{2})$ lies in region 8, the stable and unstable periodic solutions coexist.
	When $(\alpha_{1},\alpha_{2})$ lies in region 7, system \eqref{t-nf} has two unstable periodic solutions and a stable quasi-periodic solution.
	When $(\alpha_{1},\alpha_{2})$ lies in region 6, the stable quasi-periodic solution in region 7 loses its stability, and a stable three-dimensional torus emerges.
	When $(\alpha_{1},\alpha_{2})$ lies in region 5, the three-dimensional torus that appears in region 6 disappears.
	\item[(ii)]Let $a=1$, $b=\frac{1}{6}$, $c=d=0$.
	When $\varepsilon_{0} \approx 0.2533$ and $\tau_{0}=2.5\pi$, we obtain $p_{11}(0)\approx -0.507885$, $p_{22}(0)\approx 0.517818$, $\gamma(0) \approx -1.9616$, $\sigma(0) \approx -2.0391$, $l_{10}\approx779.1041$.
	The limit cycle bifurcating from $E_{3}$ is unstable.
\end{itemize}
\end{remark}
\begin{figure}
	\centering
	\includegraphics[width=150mm, height=50mm]{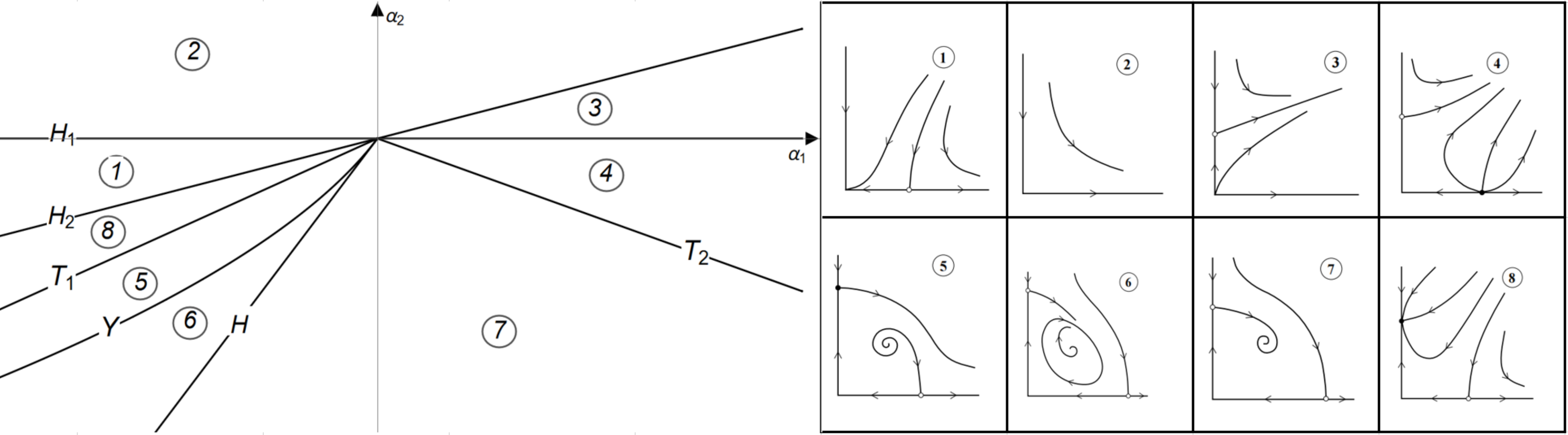}
	\caption{The bifurcation diagram and phase portrait of system \eqref{fulleq} with critical value $\varepsilon_{0} \approx 0.2533$ and $\tau_{0}=2.5\pi$, , where $a=1$, $b=\frac{1}{7}$, $c=d=0$.}
	\label{3-fzt}
\end{figure}
\begin{theorem}\label{theorem2.3}
	Suppose that \eqref{H3} holds for system \eqref{t-nf}. Then the following statements are true.
	\begin{itemize}
		\item[(i)] When $p_{11}(0)p_{22}(0)>0$, system \eqref{t-nf} has an invariant 2-torus for some sufficiently small $\delta$ such that
		\begin{align*}
			E_{3}=\left( -\frac{\delta_{1}-\gamma \delta_{2}}{\gamma \sigma -1}+O(\left\| \delta \right\|^{2}),\frac{\sigma \delta_{1}-\delta_{2}}{\gamma \sigma -1}+O(\left\| \delta \right\|^{2}) \right)^{T}
		\end{align*}
		is in the positive quadrant.
		\item[(ii)] When $p_{11}(0)p_{22}(0)<0$, system \eqref{t-nf} has an invariant 2-torus for some sufficiently small $\delta$ such that
		\begin{align*}
			E_{3}=\left( \frac{\delta_{1}-\gamma \delta_{2}}{\gamma \sigma -1}+O(\left\| \delta \right\|^{2}),\frac{\sigma \delta_{1}-\delta_{2}}{\gamma \sigma -1}+O(\left\| \delta \right\|^{2}) \right)^{T}
		\end{align*}
		is in the positive quadrant.
		Furthermore, if assumptions \eqref{HH78}, \eqref{HH9} hold and $l_{1}(0)<0(>0)$, a stable (unstable) invariant 3-torus of \eqref{t-nf} appears as $(\delta_{1},\delta_{2})$ crosses the Hopf curve $H$ near the origin.
	\end{itemize}
\end{theorem}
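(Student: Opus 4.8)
The plan is to exploit the skew-product structure of the truncated normal form \eqref{t-nf}: the amplitudes $(r_1,r_2)$ evolve independently of the angles, whose equations $\dot\varphi_1=\beta_1(\delta)$, $\dot\varphi_2=\beta_2(\delta)$ describe a rigid rotation with frequency vector $(\beta_1(\delta),\beta_2(\delta))$, where $\beta_j(0)=\tau_0\omega_j\neq0$. On the open quadrant $\{r_1>0,\ r_2>0\}$ the substitution $\rho_i=r_i^2$ of \eqref{T0} is a diffeomorphism conjugating the amplitude subsystem of \eqref{t-nf} to the planar system \eqref{t-a-nf}. Consequently, every equilibrium of \eqref{t-a-nf} lying in the open positive quadrant lifts to an invariant $2$-torus of \eqref{t-nf} carrying the linear flow of frequency $(\beta_1,\beta_2)$, and every hyperbolic limit cycle of \eqref{t-a-nf} in the open positive quadrant lifts to an invariant $3$-torus; in both cases the torus is normally hyperbolic and its stability type is inherited from that of the equilibrium, respectively the cycle. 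So the whole argument reduces to a planar analysis of \eqref{t-a-nf}, equivalently of its rescaled forms \eqref{t-a-nf-1} and \eqref{t-a-nf-2}.

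\noindent\textbf{Step 1: the equilibrium $E_3$.} I would first cancel the nonzero factors $\eta_1,\eta_2$ in \eqref{t-a-nf-1} (resp. \eqref{t-a-nf-2}), leaving two equations whose linear part in $\eta$ has rows $(1,\gamma)$ and $(\sigma,1)$ up to signs. Assumption \eqref{H3} is precisely $p_{11}(0)\neq0$ and $p_{22}(0)\neq0$, so by the identities $p_{12}(0)=2p_{11}(0)$, $p_{21}(0)=2p_{22}(0)$ recorded in \eqref{ex}, the coefficients $\gamma=p_{12}/p_{22}$, $\sigma=p_{21}/p_{11}$ are well defined for small $\delta$ and $\gamma\sigma(0)=4$, whence the determinant $\pm(1-\gamma\sigma)$ is nonzero at $\delta=0$. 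The implicit function theorem then delivers, for all sufficiently small $\delta$, a unique small solution $E_3(\delta)$ with the leading-order expansion stated in the theorem. Next, whenever the homogeneous first-degree part of that expansion is strictly inside the open positive quadrant — which happens on a nonempty open cone of directions, bounded to leading order by the half-lines $T_1,T_2$ — homogeneity in $\delta$ shows that the full $E_3(\delta)$ stays in the open quadrant once $\|\delta\|$ is small. Undoing the sign-preserving change \eqref{T1-2} (resp. \eqref{T1-3}) returns a positive-quadrant equilibrium of \eqref{t-a-nf}, and the opening paragraph yields the invariant $2$-torus. This settles (i) and the first assertion of (ii).

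\noindent\textbf{Step 2: the $3$-torus.} For the remaining part of (ii) I would carry out an Andronov--Hopf analysis of \eqref{t-a-nf-2} at $E_3$. The linearization there has characteristic polynomial $\lambda^2+2(E_{32}-E_{31})\lambda+4E_{31}E_{32}(\gamma\sigma-1)$; since $\gamma\sigma(0)=4>1$ and $E_{31},E_{32}>0$, the constant term stays positive near the curve $H$, while the trace $-2(E_{32}-E_{31})$ vanishes to leading order exactly on $H$, which is a genuine curve when $\gamma(0)\neq1$, $\sigma(0)\neq1$, i.e. \eqref{HH78}. I would then verify that the derivative of $E_{32}-E_{31}$ in the direction transverse to $H$ is nonzero — this follows from the explicit linear parts of $\delta\mapsto E_3(\delta)$ together with $\gamma\sigma(0)=4\neq1$ — so the conjugate pair of eigenvalues crosses the imaginary axis transversally as $\delta$ crosses $H$; the first Lyapunov coefficient at the crossing is $l_{10}$ of \eqref{HH9}, assumed nonzero. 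The Hopf bifurcation theorem (see \cite{L11}) then produces, on one side of $H$ near the origin, a unique small limit cycle of \eqref{t-a-nf-2} bifurcating from $E_3$, asymptotically stable if $l_1(0)<0$ and unstable if $l_1(0)>0$; it lies in the open positive quadrant for $\delta$ close to $H$ because $E_3$ does, and the opening paragraph lifts it to an invariant $3$-torus of \eqref{t-nf} with the same normal stability.

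\noindent The step I expect to be the main obstacle is making the lifting correspondence of the opening paragraph fully rigorous — verifying that the lift of a hyperbolic equilibrium or limit cycle of the planar amplitude system is a smooth, normally hyperbolic invariant torus of \eqref{t-nf} supporting a linear flow; this is conceptually immediate from the skew-product form but must be stated with care. A close second is certifying the transversality of the eigenvalue crossing along $H$ and that the first Lyapunov quantity equals $l_{10}$ of \eqref{HH9}, a finite but lengthy normal-form computation best delegated to a standard reference and an appendix. The global boundary beyond which the $3$-torus ceases to exist (the heteroclinic curve $Y$) lies outside this local statement.
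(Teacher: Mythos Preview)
Your proposal is correct and follows essentially the same route as the paper: the theorem is a summary of the discussion in Section~\ref{sec2.3}, which reduces \eqref{t-nf} to the planar amplitude system \eqref{t-a-nf} via \eqref{T0}, locates $E_3$ by solving the linear-in-$\eta$ equations (using $\gamma\sigma(0)=4$, which follows from $p_{12}(0)=2p_{11}(0)$, $p_{21}(0)=2p_{22}(0)$), and in the difficult case performs the standard Andronov--Hopf analysis along the curve $H$ under \eqref{HH78} and \eqref{HH9}. Your worry about the lifting step is unnecessary here: for the \emph{truncated} system \eqref{t-nf} the $(r,\varphi)$ dynamics are a genuine product (not merely skew-product), so an equilibrium or limit cycle in the $\rho$-plane times the $\varphi$-torus is automatically invariant with the stated stability; the delicate persistence issues arise only when the higher-order terms $g_{0i}$ are restored, which is handled separately in Section~\ref{sec3}.
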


\section{Quasi-periodic invariant tori}\label{sec3}
\subsection{Existence of quasi-periodic invariant 2-tori}\label{sec3.1}
Form the discussion in the last section, under assumption \eqref{H3}, the truncated system \eqref{t-nf} has invariant 2-tori for some parameters $\delta$ such that equilibrium $E_{3}$ is in the first quadrant.
In this subsection, we consider the persistence of invariant 2-tori of the truncated system \eqref{t-nf} when the higher-order terms are taken into account.
Without loss of generality, in the following, we discuss the case that $p_{11}(0)<0$ and $p_{22}(0)<0$.
For the case that $p_{11}(0)p_{22}(0)<0$, we have the similar result.
In order to obtain the order of perturbations, we introduce a small parameter $\epsilon$, and rescale $\eta_{i}$ and $\delta_{i}$ by $\epsilon \eta_{i}$ and $\epsilon \delta_{i}$, $i=1,2$.
System \eqref{t-a-nf-1} becomes
\begin{align}\label{taeq}
	\left\{
\begin{array}{ll}
	\dot{\eta}_{1}=2 \epsilon \eta_{1}\left( \delta_{1} -\eta_{1} -\gamma \eta_{2} +\epsilon\Gamma_{1} \eta_{1}^{2} +\epsilon\Gamma_{2} \eta_{1} \eta_{2} +\epsilon\Gamma_{3} \eta_{2}^{2} \right),\\
	\dot{\eta}_{2}=2 \epsilon \eta_{2}\left( \delta_{2} -\sigma \eta_{1} -\eta_{2} +\epsilon\Sigma_{1} \eta_{1}^{2} +\epsilon\Sigma_{2} \eta_{1} \eta_{2} +\epsilon\Sigma_{3} \eta_{2}^{2} \right),
\end{array}
\right.
\end{align}
which has a positive equilibrium $\eta=y_{0}=(y_{10},y_{20})^{T}$ for some parameters $\delta$, where
\begin{align*}
	y_{10}=\frac{-\delta_{1}+\gamma\delta_{2}}{\gamma\sigma-1}+O(\epsilon), \qquad
	y_{20}=\frac{\sigma\delta_{1}-\delta_{2}}{\gamma\sigma-1}+O(\epsilon).
\end{align*} 
Let $\Pi_{2}$ denote the set of all such parameters $\delta$.
Substituting \eqref{T0}, \eqref{T1-2} and these rescaling into \eqref{r-nf-done}, and making the translational change $\eta=y_{0}+\epsilon^{\frac{1}{4}}I$, we obtain
\begin{align}\label{kam-nf}
	\left\{
	\begin{array}{ll}
		\dot{I}=\epsilon \Omega^{0}(\delta,\epsilon)I+\epsilon^{\frac{5}{4}}F_{1}(I,\varphi,\delta,\epsilon),\\
		\dot{\varphi}=\omega^0(\delta,\epsilon)+\epsilon^{\frac{5}{4}}F_{2}(I,\varphi,\delta,\epsilon),
	\end{array}
	\right.
\end{align}
where
\begin{align*}
	\Omega^{0}(\delta,\epsilon)=\left(
	\begin{array}{cc} 
		-2y_{10} & -2\gamma y_{10}\\
		-2\sigma y_{20} & -2y_{20}
	\end{array} 
	\right), 
	\qquad
	\omega^0(\delta,\epsilon)=\left(
	\begin{array}{c}
		\beta_{1}(\epsilon \delta) -\epsilon \left( \frac{q_{11}(\epsilon \delta)}{p_{11}(\epsilon \delta)}y_{10} +\frac{q_{12}(\epsilon \delta)}{p_{22}(\epsilon \delta)} y_{20} \right) \\
		\beta_{2}(\epsilon \delta) -\epsilon \left( \frac{q_{21}(\epsilon \delta)}{p_{11}(\epsilon \delta)} y_{10} +\frac{q_{22}(\epsilon \delta)}{p_{22}(\epsilon \delta)}y_{20} \right)
	\end{array}
	\right),
\end{align*}
and $F=(F_{1},F_{2})^{T}$ is the function of $I$, $\varphi$, $\delta$, $\epsilon$.
Take a convex bounded open subset $\Pi_{2}^{0} \subset \Pi_{2}$ with positive Lebesgue measure. 
Let $\lambda_{1}^{0}(\delta,\epsilon)$ and $\lambda_{2}^{0}(\delta,\epsilon)$ be the two eigenvalues of $\Omega^{0}(\delta,\epsilon)$.
Denote $\lambda_{i0}^{0}(\delta)=\lambda_{i}^{0}(\delta,0) (i=1,2)$, $\Lambda_{0}^{0}(\delta)={\rm diag}\{\lambda_{10}^{0}(\delta),\lambda_{20}^{0}(\delta)\}$.
$\omega^{0}$ can be write as $\omega^{0}(\delta,\epsilon)=\omega_{0}^{0}+\epsilon \omega_{1}^{0}(\delta) +O(\epsilon^2)$ with $\omega_{0}^{0}=\tau_{0}(\omega_{1},\omega_{2})^{T}$.
We can use the results in \cite{L12} to obtain the existence of quasi-periodic invariant 2-tori of \eqref{kam-nf}, which implies the existence of quasi-periodic invariant 2-tori of \eqref{fulleq}.
\begin{theorem}\label{theorem3}
	If
	\begin{align}\label{the3-mes}
		\det \left( \frac{\partial \omega_{1}^{0}(\delta)}{\partial \delta}  \right) = \beta_{11}\beta_{22}-\beta_{12}\beta_{21}-\frac{q_{22}(0)}{p_{22}(0)}\beta_{11}-\frac{q_{11}(0)}{p_{11}(0)}\beta_{22}+\frac{q_{11}(0)q_{22}(0)}{p_{11}(0)p_{22}(0)} \neq 0,
	\end{align}
	where $\beta_{ij}$, $p_{ij}(0)$, $q_{ij}(0)$ are given in \eqref{ex}, then system \eqref{kam-nf} has quasi-periodic invariant 2-tori for most parameter values of $\Pi_{2}^{0}$ and sufficiently small $\epsilon$.
\end{theorem}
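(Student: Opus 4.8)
I would read \eqref{kam-nf} as a small perturbation of an integrable family of \emph{hyperbolic} lower-dimensional invariant tori depending on the external parameter $\delta$, and then apply the KAM theorem of \cite{L12}. Dropping the $\epsilon^{5/4}$-terms leaves $\dot I=\epsilon\Omega^0(\delta,\epsilon)I$, $\dot\varphi=\omega^0(\delta,\epsilon)$, which carries the invariant $2$-torus $\{I=0\}\times\mathbb{T}^{2}$ with internal frequency $\omega^0(\delta,\epsilon)$ and normal exponents $\epsilon\lambda_1^0(\delta,\epsilon)$, $\epsilon\lambda_2^0(\delta,\epsilon)$. Since $E_3$ lies in the positive quadrant, $y_{10},y_{20}>0$, and the identities $p_{12}(0)=2p_{11}(0)$, $p_{21}(0)=2p_{22}(0)$ give $\gamma\sigma(0)=4$, so $\det\Omega^0(\delta,0)=4y_{10}y_{20}(1-\gamma\sigma)<0$ for $\delta\in\Pi_2^0$: the leading normal exponents $\lambda_{10}^0(\delta),\lambda_{20}^0(\delta)$ are real of opposite sign, and (shrinking $\Pi_2^0$ and $\epsilon$ if necessary) $\lambda_i^0(\delta,\epsilon)$ stay bounded away from $0$ uniformly on $\Pi_2^0$. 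This built-in normal hyperbolicity is what makes the problem tractable: every normal--internal combination ${\rm i}\langle k,\omega^0\rangle+\epsilon\lambda_i^0$, ${\rm i}\langle k,\omega^0\rangle+\epsilon(\lambda_i^0+\lambda_j^0)$, ${\rm i}\langle k,\omega^0\rangle+\epsilon(\lambda_i^0-\lambda_j^0)$ has real part of order $\epsilon$, hence is automatically bounded away from zero, so only the internal divisors $\langle k,\omega^0\rangle$ need to be controlled by a Diophantine exclusion.

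Next I would check the frequency non-degeneracy. Writing $\omega^0(\delta,\epsilon)=\omega_0^0+\epsilon\,\omega_1^0(\delta)+O(\epsilon^2)$ with $\omega_0^0=\tau_0(\omega_1,\omega_2)^{T}$ fixed, and differentiating the explicit expression for $\omega^0$ in \eqref{kam-nf} using $\beta_j(\delta)=\tau_0\omega_j+\beta_{j1}\delta_1+\beta_{j2}\delta_2+O(\|\delta\|^{2})$ from \eqref{ex}, $y_{10}=\frac{-\delta_1+\gamma\delta_2}{\gamma\sigma-1}+O(\epsilon)$, $y_{20}=\frac{\sigma\delta_1-\delta_2}{\gamma\sigma-1}+O(\epsilon)$ and $\gamma\sigma(0)=4$, one obtains $\partial\omega_1^0/\partial\delta$ and verifies that its determinant is exactly the left-hand side of \eqref{the3-mes}. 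Thus, under \eqref{the3-mes}, the Jacobian of $\delta\mapsto\omega^0(\delta,\epsilon)$ equals $\epsilon\,\partial_\delta\omega_1^0+O(\epsilon^{2})$, nonsingular for all small $\epsilon$, so $\delta\mapsto\omega^0(\delta,\epsilon)$ is a diffeomorphism of $\Pi_2^0$ onto its image, a set of diameter $O(\epsilon)$.

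The KAM step itself: rescaling time by $s=\epsilon t$ turns \eqref{kam-nf} into $\dot I=\Omega^0(\delta,\epsilon)I+\epsilon^{1/4}\tilde F_1$, $\dot\varphi=\epsilon^{-1}\omega^0(\delta,\epsilon)+\epsilon^{1/4}\tilde F_2$, where $\tilde F$ carries the same (uniform in small $\epsilon$) bounds as $F$ on a fixed neighbourhood of $\{I=0\}\times\mathbb{T}^{2}\times\Pi_2^0$ and has the smoothness required by \cite{L12}, which is provided by the standing hypothesis that $f$ is sufficiently smooth. In these coordinates $\Omega^0$ has order-one hyperbolic eigenvalues, the perturbation is of size $\epsilon^{1/4}$, and --- the point of the rescaling --- the parameter--frequency map becomes $\delta\mapsto\epsilon^{-1}\omega_0^0+\omega_1^0(\delta)+O(\epsilon)$, whose Jacobian $\partial_\delta\omega_1^0+O(\epsilon)$ is now $O(1)$ and, by \eqref{the3-mes}, uniformly nonsingular, the constant translation $\epsilon^{-1}\omega_0^0$ being harmless. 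Hence \eqref{kam-nf}, after rescaling, is a genuine small perturbation of an integrable family of hyperbolic $2$-tori with a non-degenerate frequency map, to which \cite{L12} applies: fixing $\tau>1$ and a constant $\gamma_0>0$, for each $\delta$ with $\epsilon^{-1}\omega^0(\delta,\epsilon)$ being $(\gamma_0,\tau)$-Diophantine it yields, for all sufficiently small $\epsilon$, a smooth invariant $2$-torus $O(\epsilon^{1/4})$-close to $\{I=0\}$ carrying a quasi-periodic flow with frequency $O(\epsilon^{1/4})$-close to $\epsilon^{-1}\omega^0$. The non-degeneracy of $\delta\mapsto\epsilon^{-1}\omega^0$ transports the standard frequency-space measure estimate to $\delta$-space, so the excluded $\delta$ fill a set of relative measure $\leq c\gamma_0$; taking $\gamma_0=\gamma_0(\epsilon)\to0$ slowly enough that $\epsilon^{1/4}$ stays below the KAM threshold (a positive power of $\gamma_0$) makes the excluded set $o(1)$ as $\epsilon\to0$. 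Finally, undoing the time rescaling, the translation $\eta=y_0+\epsilon^{1/4}I$, the $\epsilon$-rescaling, the changes \eqref{T1-2}, \eqref{T0} and the (smooth, locally invertible) normal-form coordinate change, and then lifting through the center-manifold reduction relating \eqref{fulleq} to \eqref{r-nf-done}, turns this torus into a quasi-periodic invariant $2$-torus of \eqref{fulleq}.

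The hard part will be the twofold degeneracy in $\epsilon$: the $\delta$-variation of the frequency of \eqref{kam-nf} is itself of order $\epsilon$, so no off-the-shelf KAM theorem with parameters (which needs an $\epsilon$-independent submersion from parameters to frequencies) applies to \eqref{kam-nf} as it stands, and likewise the normal hyperbolicity is only of order $\epsilon$. The time rescaling restores $O(1)$ normal data and an $O(1)$, non-degenerate $\delta$-to-frequency map at the price of an $\epsilon^{-1}$-large frequency, and the decisive quantitative input is the gap between the perturbation order $\epsilon^{5/4}$ and the order $\epsilon$ of the normal part and of the $\delta$-variation of the frequency, which after rescaling leaves a perturbation of size $\epsilon^{1/4}\to0$ against order-one data, so \cite{L12}'s smallness hypothesis holds for small $\epsilon$. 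Carefully tracking these $\epsilon$-powers through the KAM iteration, and --- the genuinely subtle point --- confirming that the resonant $\delta$-set stays of small measure after being pulled back through the frequency map (which, before rescaling, is $O(\epsilon)$-contracting), is the technical core of the proof.
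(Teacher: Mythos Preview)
Your approach is correct and is essentially the paper's: both reduce the question to the KAM theorem of \cite{L12}, verifying (i) that the normal eigenvalues $\lambda_{i0}^{0}(\delta)$ are real, nonzero and distinct on $\Pi_{2}^{0}$ (you via $\det\Omega^{0}(\delta,0)=4y_{10}y_{20}(1-\gamma\sigma)<0$, the paper via the explicit formula $\lambda_{i0}^{0}=-(y_{10}+y_{20})\pm\sqrt{(y_{10}-y_{20})^{2}+4\gamma\sigma y_{10}y_{20}}$ together with $\gamma\sigma(0)=4$), and (ii) that \eqref{the3-mes} is precisely the non-degeneracy of $\partial_{\delta}\omega_{1}^{0}$ needed for the measure estimate. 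The paper simply identifies \eqref{kam-nf} as an instance of the general system (1.1) in \cite{L12} with the exponents $n_{11}=n_{21}=0$, $n_{12}=n_{22}=2$, $q_{3}=1$, $q_{4}=\tfrac14$, $q_{7}=\tfrac54$ and then quotes Theorems~2.2 and~2.3 there; your time-rescaling $s=\epsilon t$ and the discussion of how the $\epsilon^{5/4}$ perturbation beats the $\epsilon$-sized normal and frequency data is a transparent unpacking of what that black-box citation is doing, but the logical content is the same.
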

\begin{proof}
	The theorem can be proved by Theorem 2.2 and 2.3 in \cite{L12}.
	System \eqref{kam-nf} is a special case of (1.1) in \cite{L12} with $n_{11}=n_{21}=0$, $n_{12}=n_{22}=2$, $q_{3}=1$, $q_{4}=\frac{1}{4}$, $q_{7}=\frac{5}{4}$.
	Simple operation produces that $\lambda_{i0}^{0}(\delta)=-(y_{10}+y_{20})\pm \sqrt{(y_{10}-y_{20})^{2}+4\gamma\sigma y_{10}y_{20}}$.
	Since $y_{10}>0$, $y_{20}>0$ for $\delta \in \Pi_{2}^{0}$ and $\gamma(0)\sigma(0)=4$, there is a positive constant $c_{0}$ such that
	\begin{align*}
		\left| \lambda_{i0}^{0}(\delta) \right| \geq c_{0}(i=1,2), \qquad \left| \lambda_{10}^{0}(\delta)-\lambda_{20}^{0}(\delta) \right| \geq c_{0}, \qquad \forall \delta \in \Pi_{2}^{0},
	\end{align*}
	Obviously, hypotheses (H1)-(H3) in \cite{L12} are satisfied.
	Thus it follows from Theorem 2.2 in \cite{L12} that for any given $0< \kappa \ll 1$, there is a sufficiently small $\epsilon^{\star}>0$ such that when $0 < \epsilon < \epsilon^{\star}$, there is a Cantor set $\Pi_{\kappa} \subset \Pi_{2}^{0}$ such that for each $\delta \in \Pi_{\kappa}$, system \eqref{kam-nf} has quasi-periodic invariant 2-tori.
	Moreover, it follows from assumption \eqref{the3-mes} and Theorem 2.3 in \cite{L12} that ${\rm meas} \left( \Pi_{2}^{0} - \Pi_{\kappa} \right) \rightarrow 0 $ as $\kappa \rightarrow 0$, that is, for most parameter values of $\Pi_{2}^{0}$, system \eqref{kam-nf} has quasi-periodic invariant 2-tori.
\end{proof}
Because these coordinate and parameter transformations changing \eqref{fulleq} into \eqref{kam-nf} are reversible, we have the following result.
\begin{corollary}\label{corollary1}
	For any fixed $(\varepsilon_{0},\tau_{0})$ with the corresponding eigenvalues $\pm {\rm i} \omega_{1}$, $\pm {\rm i} \omega_{2}$ satisfying \eqref{ceq}, if assumptions \eqref{H0}, \eqref{H1}, \eqref{H3} and \eqref{the3-mes} hold, then system \eqref{fulleq} has quasi-periodic invariant 2-tori for infinitely many $(\varepsilon,\tau)$ near $(\varepsilon_{0},\tau_{0})$.
\end{corollary}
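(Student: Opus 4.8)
The plan is to transport the conclusion of Theorem~\ref{theorem3} back through the chain of changes of variables that led from \eqref{fulleq} to \eqref{kam-nf}. Recall that this chain consists of: the substitution $x\mapsto(x_1,x_2)$ with $x_2=\dot x_1$, the time rescaling $t\mapsto\tau t$ and the parameter shift $\varepsilon=\varepsilon_0+\alpha_1$, $\tau=\tau_0+\alpha_2$ turning \eqref{fulleq} into \eqref{geq}; the center manifold reduction producing \eqref{cen-mani-eq}; the near-identity normalizing transformation producing \eqref{nf}; the passage to polar coordinates producing \eqref{r-nf}; the reparametrization $\alpha\mapsto\delta$, which is a diffeomorphism near the origin by assumption \eqref{H1}, producing \eqref{r-nf-done}; the substitutions $\rho_i=r_i^2$ and $\eta_1=-p_{11}\rho_1$, $\eta_2=-p_{22}\rho_2$ (legitimate since $p_{11}(0),p_{22}(0)<0$); the artificial rescaling $\eta_i\mapsto\epsilon\eta_i$, $\delta_i\mapsto\epsilon\delta_i$; and finally the translation $\eta=y_0+\epsilon^{1/4}I$. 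Each of these maps is a diffeomorphism on the relevant neighbourhood (for $\rho_i=r_i^2$, on the positive quadrant, which is exactly where the equilibrium $E_3$ and hence the torus lives), so their composition $\mathcal T_\epsilon$ is an invertible smooth change of coordinates and parameters between a punctured neighbourhood of the origin in the center-manifold coordinates and the phase space of \eqref{kam-nf}.

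First I would fix a small $\epsilon$. By Theorem~\ref{theorem3} there is a Cantor set $\Pi_\kappa\subset\Pi_2^0$ with $\mathrm{meas}(\Pi_2^0\setminus\Pi_\kappa)$ small such that for every $\delta\in\Pi_\kappa$ system \eqref{kam-nf} carries a quasi-periodic invariant 2-torus $\mathcal I_\delta$. Pulling $\mathcal I_\delta$ back by $\mathcal T_\epsilon^{-1}$ yields a quasi-periodic invariant 2-torus of \eqref{r-nf-done}, hence of the flow on the center manifold of \eqref{geq}, at parameter value $\alpha=\alpha(\epsilon\delta)$. Since the center manifold is a locally invariant finite-dimensional manifold for the delay equation \eqref{geq}, a trajectory lying on this torus is an honest solution of \eqref{geq}, and undoing the time rescaling and the identifications $x_2=\dot x_1$, $x=x_1$ gives a quasi-periodic solution of \eqref{fulleq} filling out a 2-torus, at $(\varepsilon,\tau)=(\varepsilon_0+\alpha_1,\tau_0+\alpha_2)$. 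The map $\delta\mapsto(\varepsilon,\tau)=(\varepsilon_0,\tau_0)+\alpha(\epsilon\delta)$ is a diffeomorphism onto its image; because $\Pi_\kappa$ has positive Lebesgue measure, hence is infinite, and because $\alpha(\epsilon\delta)\to0$ as $\epsilon\to0$ uniformly for $\delta$ in the bounded set $\Pi_2^0$, the resulting parameter values are infinitely many and accumulate at $(\varepsilon_0,\tau_0)$. Letting $\epsilon$ run through a sequence tending to $0$ therefore produces infinitely many $(\varepsilon,\tau)$ arbitrarily close to $(\varepsilon_0,\tau_0)$ for which \eqref{fulleq} has a quasi-periodic invariant 2-torus, which is the assertion.

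The point that needs care is not the KAM step (that is Theorem~\ref{theorem3}) but the bookkeeping of domains: one must check that for each fixed small $\epsilon$ the torus $\mathcal I_\delta$, after being blown back up by $\eta=y_0+\epsilon^{1/4}I$ and $\eta_i\mapsto\epsilon\eta_i$, still sits inside the fixed neighbourhood of the origin on which the center manifold reduction and the normalizing transformation are valid; this holds because that neighbourhood does not depend on $\epsilon$ while the rescaled torus has size $O(\epsilon)$. One should also note that only finitely many derivatives enter: $f$ is assumed sufficiently smooth, so \eqref{r-nf-done} has remainders $g_{0i}=O(\|r\|^6)$ of the regularity required by the KAM theorem in \cite{L12}, and the nondegeneracy hypotheses \eqref{H1}, \eqref{H3}, \eqref{the3-mes} ensure respectively that $\alpha\mapsto\delta$ is a diffeomorphism, that $E_3$ persists in the positive quadrant, and that the frequency map satisfies the twist condition needed for Theorem~\ref{theorem3}. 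Collecting these observations gives the corollary.
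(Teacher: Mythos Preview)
Your proposal is correct and follows exactly the approach the paper takes: the paper simply remarks, in the sentence immediately preceding the corollary, that ``these coordinate and parameter transformations changing \eqref{fulleq} into \eqref{kam-nf} are reversible'' and states Corollary~\ref{corollary1} without any further proof. Your write-up supplies the details behind that one-line justification---the explicit list of transformations, their invertibility on the relevant domains, and the size check ensuring the KAM tori land inside the region where the center-manifold reduction and normal-form change are valid---all of which the paper leaves implicit.
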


\subsection{Existence of Quasi-periodic invariant 3-tori}\label{sec3.2}
When $p_{11}(0)p_{22}(0)<0$, the truncated amplitude system \eqref{t-a-nf} may have limit cycles associated with a secondary Hopf bifurcation.
Without loss of generality, in the following, we consider the case that $p_{11}(0)>0$ and $p_{22}(0)<0$.
In this subsection, we derive the normal form near the Hopf bifurcation curve $H$ and discuss the persistence of the quasi-periodic invariant 3-tori corresponding to the limit cycles in \eqref{t-a-nf} by KAM theorem.
In order to obtain a smaller perturbation, we need to normalize \eqref{r-nf-done} to higher order terms.
As
\begin{align}\label{C1}
	k \omega_{1} - l \omega_{2} \neq 0, \quad k,l\in {\mathbb Z}^{+}, \quad
	1 \leq k+l \leq 12,
\end{align}
there is an invertible parameter-dependent change such that \eqref{r-nf-done} can be normalized to the eleventh-order terms, and using this change with
\begin{align*}
	\eta \rightarrow \epsilon \eta, \qquad \delta \rightarrow \epsilon \delta,
\end{align*}
we obtain
\begin{align}\label{2}
	\left\{
	\begin{array}{ll}
		\dot{\eta}_{1}=2 \epsilon \eta_{1}\left( \delta_{1} +\eta_{1} -\gamma\eta_{2} +\epsilon\Gamma_{1} \eta_{1}^{2} -\epsilon\Gamma_{2} \eta_{1} \eta_{2} +\epsilon\Gamma_{3} \eta_{2}^{2}+\sum_{3 \leq l+m \leq 5}\epsilon^{l+m-1}\gamma_{lm}\eta_{1}^{l}\eta_{2}^{m} \right)+\epsilon^{\frac{11}{2}}\eta_{1}^{\frac{1}{2}}f_{11}(\eta,\varphi,\delta),\\
		\dot{\eta}_{2}=2 \epsilon \eta_{2}\left( \delta_{2} +\sigma \eta_{1} -\eta_{2} +\epsilon\Sigma_{1} \eta_{1}^{2} -\epsilon\Sigma_{2} \eta_{1} \eta_{2} +\epsilon\Sigma_{3} \eta_{2}^{2}+\sum_{3 \leq l+m \leq 5}\epsilon^{l+m-1}\sigma_{lm}\eta_{1}^{l}\eta_{2}^{m} \right)+\epsilon^{\frac{11}{2}}\eta_{2}^{\frac{1}{2}}f_{12}(\eta,\varphi,\delta),\\
		\dot{\varphi}_1=\beta_{1}^{(1)} +\epsilon b_{11}\eta_{1}+\epsilon b_{12}\eta_{2}+\sum_{2 \leq l+m \leq 5}\epsilon^{l+m}B_{1lm}\eta_{1}^{l}\eta_{2}^{m}+\epsilon^{\frac{11}{2}}\eta_{1}^{-\frac{1}{2}}f_{13}(\eta,\varphi,\delta),\\
		\dot{\varphi}_2=\beta_{2}^{(1)}  +\epsilon b_{21}\eta_{1}+\epsilon b_{22}\eta_{2}+\sum_{2 \leq l+m \leq 5}\epsilon^{l+m}B_{2lm}\eta_{1}^{l}\eta_{2}^{m}+\epsilon^{\frac{11}{2}}\eta_{2}^{-\frac{1}{2}}f_{14}(\eta,\varphi,\delta),
	\end{array}
	\right.
\end{align}
where $\gamma$, $\sigma$, $\Gamma_{i}$, $\Sigma_{i}$ $(i=1,2,3)$ are the same as before, and $f_{1i}=O(\left\| \eta \right\|^{6} )$, $(i=1,\cdots,4)$,
\begin{align*}
		b_{11}=\frac{q_{11}}{p_{11}}, \qquad b_{12}=\frac{-q_{12}}{p_{22}}, \qquad b_{21}=\frac{q_{21}}{p_{11}}, \qquad b_{22}=\frac{-q_{22}}{p_{22}},
\end{align*}
\begin{align*}
	\beta_{j}^{(1)}(\delta) =\tau_{0}\omega_{j}+\epsilon\beta_{j1}\delta_{1}+\epsilon\beta_{j2}\delta_{2}+\epsilon^{2}X(\delta), \qquad j=1,2.
\end{align*}
It is well-known that the first pair of equations in \eqref{2} without the terms $\epsilon^{\frac{11}{2}}\eta_{i}^{\frac{1}{2}}f_{1i}$, $(i=1,2)$ has a positive equilibrium $\eta_{0}=(\eta_{10},\eta_{20})^{T}$ for some $\delta$, where
\begin{align*}
	\eta_{10}=\frac{\delta_{1}-\gamma\delta_{2}}{\gamma\sigma-1}+O(\epsilon), \qquad
	\eta_{20}=\frac{\sigma\delta_{1}-\delta_{2}}{\gamma\sigma-1}+O(\epsilon).
\end{align*} 
Since the map $(\delta_{1},\delta_{2}) \mapsto (\eta_{10},\eta_{20})$ is a diffeomorphism under assumption \eqref{H3}, we can introduce new parameter variable $\zeta=(\zeta_{1},\zeta_{2})^{T}$ by setting
\begin{align*}
	\zeta_{1}=\eta_{10}, \qquad \zeta_{2}=\eta_{20}.
\end{align*}
By the coordinate change
\begin{align*}
	\eta \rightarrow \eta_{0}+\eta,
\end{align*}	 
\eqref{2} is changed into
\begin{align}\label{3}
	\left\{
	\begin{array}{ll}
		\dot{\eta}_1=2\epsilon \left(  c_{11}\eta_{1}+c_{12}\eta_{2}+C_{11}\eta_{1}^{2}+C_{12}\eta_{1}\eta_{2}+C_{13}\eta_{2}^{2}+\sum_{3 \leq l+m \leq 6}\epsilon^{l+m-2}\gamma_{lm}^{'}\eta_{1}^{l}\eta_{2}^{m} \right)+\epsilon^{\frac{11}{2}}f_{21}(\eta,\varphi,\zeta),\\
		\dot{\eta}_2=2\epsilon \left( c_{21}\eta_{1}+c_{22}\eta_{2}+C_{21}\eta_{1}^{2}+C_{22}\eta_{1}\eta_{2}+C_{23}\eta_{2}^{2}+\sum_{3 \leq l+m \leq 6}\epsilon^{l+m-2}\sigma_{lm}^{'}\eta_{1}^{l}\eta_{2}^{m} \right)+\epsilon^{\frac{11}{2}}f_{22}(\eta,\varphi,\zeta),\\
		\dot{\varphi}_1=\beta_{1}^{(2)} +\epsilon b_{11}^{(2)}\eta_{1}+\epsilon b_{12}^{(2)}\eta_{2}+\sum_{2 \leq l+m \leq 5}\epsilon^{l+m}B_{1lm}^{(2)}\eta_{1}^{l}\eta_{2}^{m}+\epsilon^{\frac{11}{2}}f_{23}(\eta,\varphi,\zeta),\\
		\dot{\varphi}_2=\beta_{2}^{(2)} +\epsilon b_{21}^{(2)}\eta_{1}+\epsilon b_{22}^{(2)}\eta_{2}+\sum_{2 \leq l+m \leq 5}\epsilon^{l+m}B_{2lm}^{(2)}\eta_{1}^{l}\eta_{2}^{m}+\epsilon^{\frac{11}{2}}f_{24}(\eta,\varphi,\zeta),
	\end{array}
	\right.
\end{align}
where $b_{ij}^{(2)}=b_{ij}+O(\epsilon)$,
\begin{align*}
	c_{11}(\zeta,\epsilon)&=\zeta_1(1+\epsilon(2\Gamma_{1}\zeta_1-\Gamma_{2}\zeta_2))+O(\epsilon^{2}), &
	c_{21}(\zeta,\epsilon)&=\zeta_2(\sigma+\epsilon(2\Sigma_{1}\zeta_1-\Sigma_{2}\zeta_2))+O(\epsilon^{2}),\\
	c_{12}(\zeta,\epsilon)&=\zeta_1(-\gamma+\epsilon(-\Gamma_{2}\zeta_1+2\Gamma_{3}\zeta_2))+O(\epsilon^{2}), &
	c_{22}(\zeta,\epsilon)&=\zeta_2(-1+\epsilon(-\Sigma_{2}\zeta_1+2\Sigma_{3}\zeta_2))+O(\epsilon^{2}),\\
	C_{11}(\zeta,\epsilon)&=1+\epsilon(3\Gamma_{1}\zeta_1-\Gamma_{2}\zeta_2)+O(\epsilon^{2}), &
	C_{21}(\zeta,\epsilon)&=\epsilon \Sigma_{1}\zeta_2+O(\epsilon^{2}),\\
	C_{12}(\zeta,\epsilon)&=-\gamma+\epsilon(-2\Gamma_{2}\zeta_1+2\Gamma_{3}\zeta_2)+O(\epsilon^{2}), &
	C_{22}(\zeta,\epsilon)&=\sigma+\epsilon(2\Sigma_{1}\zeta_1-2\Sigma_{2}\zeta_2)+O(\epsilon^{2}),\\
	C_{13}(\zeta,\epsilon)&=\epsilon \Gamma_{3}\zeta_1+O(\epsilon^{2}), &
	C_{23}(\zeta,\epsilon)&=-1+\epsilon(-\Sigma_{2}\zeta_1+3\Sigma_{3}\zeta_2)+O(\epsilon^{2}).
\end{align*}
\begin{align*}
	\beta_{j}^{(2)} =\tau_{0}\omega_{j}+\epsilon(b_{j1}-\beta_{j1}-\sigma\beta_{j2})\zeta_{1}+\epsilon(b_{j2}+\gamma\beta_{j1}+\beta_{j2})\zeta_{2}+\epsilon^{2}X(\zeta), \qquad j=1,2,
\end{align*}
and all coefficients are functions of $\zeta$, $\epsilon$.
Here and below, $X(\cdot)$ denotes some smooth function in ``$\cdot$".
\par
The truncated amplitude system of \eqref{3} will undergo the secondary Hopf bifurcation near the curve
\begin{align*}
	H^{\star}=\{ (\zeta_{1},\zeta_{2})^{T} : c_{11}+c_{22}=0, \quad \zeta_{1}>0, \quad \zeta_{2}>0 \},
\end{align*}
and may have a limit cycle from the origin $\eta=0$.
The curve is sufficiently close to the Hopf bifurcation curve H.
We regard $\alpha_{3}=\frac{1}{2}(c_{11}+c_{22})$ as the Hopf bifurcation parameter, and obtain
\begin{align*}
	\zeta_{1}=&2\alpha_{3}+\zeta_2+\epsilon \left( (-2 \Sigma_{3}+\Gamma_{2}+\Sigma_{2}-2\Gamma_{1})\zeta_2^2+2 \left(-4 \Gamma_{1} +\Gamma_{2}+\Sigma_{2}\right)\zeta_2 \alpha_{3}-8 \Gamma_{1}\alpha_{3}^{2} \right)  +O(\epsilon^{2}),
\end{align*}
then the matrix $(c_{ij})$ has one pair of simple complex conjugate eigenvalues
\begin{align*}
	\lambda_{c}=\alpha_{3} + {\rm i} \beta_{3}, \qquad \beta_{3}=\sqrt{c_{11}c_{22}-c_{12}c_{21}-\alpha_{3}^{2}},
\end{align*}
near the curve $H^{\star}$.
According to the procedure of analyzing the Hopf bifurcation, we make the change
\begin{align*}
	\eta=T u, \qquad
	T=
	\left(
	\begin{array}{cc}
		2c_{12} & 2c_{12} \\
		-c_{11}+c_{22}+2{\rm i} \beta_{3} & -c_{11}+c_{22}-2{\rm i} \beta_{3} \\
	\end{array}
	\right),
\end{align*}
then system \eqref{3} can be written as
\begin{align}\label{4}
	\left\{
	\begin{array}{ll}
		\dot{u}_{1}=&2\epsilon \left( \lambda_{c}u_{1}+d_{11}u_{1}^{2}+d_{12}u_{1}u_{2}+d_{13}u_{2}^{2}+\epsilon(D_{11}u_{1}^{3}+D_{12}u_{1}^{2}u_{2}+D_{13}u_{1}u_{2}^{2}+D_{14}u_{2}^{3})\right. ,\\
		&\left. +\sum_{4 \leq l+m \leq 6}\epsilon^{l+m-2}\gamma_{lm}^{\star}u_{1}^{l}u_{2}^{m}\right)  +\epsilon^{\frac{11}{2}}f_{31}(u,\varphi,\alpha_{3},\zeta_{2}),\\
		\dot{u}_2=&\bar{\dot{u}}_{1},\\
		\dot{\varphi}_1=&\beta_{1}^{(3)} +\epsilon b_{11}^{(3)}u_{1}+\epsilon b_{12}^{(3)}u_{2}+\sum_{2 \leq l+m \leq 5}\epsilon^{l+m}B_{1lm}^{(3)}u_{1}^{l}u_{2}^{m}+\epsilon^{\frac{11}{2}}f_{33}(u,\varphi,\alpha_{3},\zeta_{2}),\\
		\dot{\varphi}_2=&\beta_{2}^{(3)} +\epsilon b_{21}^{(3)}u_{1}+\epsilon b_{22}^{(3)}u_{2}+\sum_{2 \leq l+m \leq 5}\epsilon^{l+m}B_{2lm}^{(3)}u_{1}^{l}u_{2}^{m}+\epsilon^{\frac{11}{2}}f_{34}(u,\varphi,\alpha_{3},\zeta_{2}),
	\end{array}
	\right.
\end{align}
where $u_{2}=\bar{u}_{1}$,
\begin{align*}
	\beta_{j}^{(3)}=\tau_{0}\omega_{j}+2\epsilon(b_{j1}-\beta_{j1}-\sigma\beta_{j2})\alpha_{3}+\epsilon(b_{j1}+b_{j2}+(\gamma-1)\beta_{j1}+(1-\sigma)\beta_{j2})\zeta_{2}+\epsilon^{2}X(\alpha_{3},\zeta_{2}), \qquad j=1,2,
\end{align*}
and the coefficients in \eqref{4} are seen in Appendix A.4.
It follows from the normal form method that there is an invertible parameter-dependent change of complex coordinate
\begin{align*}
	u_{1} \rightarrow u_{1}+\sum_{2 \leq l+m \leq 8}h_{lm}u_{1}^{l} u_{2}^{m},
\end{align*}
with $u_{2}=\bar{u}_{1}$ and $h_{21}=h_{32}=h_{43}=0$, such that we can normalize \eqref{4} up to the eighth-order term, that is
\begin{align}\label{6}
	\left\{
	\begin{array}{ll}
		\dot{u}_{1}=2\epsilon\left( \lambda_{c}u_{1}+l_{21}u_{1}^{2}u_{2}+l_{32}u_{1}^{3}u_{2}^{2}+l_{43}u_{1}^{4}u_{3}^{3}+O(\left\| u \right\|^{9})\right) +\epsilon^{\frac{11}{2}}f_{41}(u,\varphi,\alpha_{3},\zeta_{2}).\\
		\dot{u}_2=&\bar{\dot{u}}_{1},\\
		\dot{\varphi}_{1}=\beta_{1}^{(3)} +\epsilon g_{11}(u,\alpha_{3},\zeta_{2})+\epsilon^{\frac{11}{2}}f_{43}(u,\varphi,\alpha_{3},\zeta_{2}),\\
		\dot{\varphi}_{2}=\beta_{2}^{(3)} +\epsilon g_{12}(u,\alpha_{3},\zeta_{2})+\epsilon^{\frac{11}{2}}f_{44}(u,\varphi,\alpha_{3},\zeta_{2}),
	\end{array}
	\right.
\end{align}
where $g_{1j}=O(\left\| u \right\|)$.
Moreover, we list some expressions of $l_{(i+1)i}(\alpha_{3})$ in Appendix A.5.
\par
Using polar coordinates $(r_{3},\varphi_{3})$ with $u_{1}=r_{3}e^{{\rm i} \varphi_{3}}$, $u_{2}=r_{3}e^{-{\rm i} \varphi_{3}}$, letting $\rho_{3}=r_{3}^{2}$, and rescaling
\begin{align*}
	\rho_{3} \rightarrow \epsilon\rho_{3}, \qquad \alpha_{3} \rightarrow \epsilon^{2}\alpha_{3},
\end{align*}
system \eqref{6} becomes
\begin{align}\label{8}
	\left\{
	\begin{array}{ll}
		\dot{\rho}_{3}=4\epsilon^{3}\rho_{3} \left( \alpha_{3} +a_{1}^{'}\rho_{3} +\epsilon a_{2}^{'}\rho_{3}^{2} +\epsilon^{2} a_{3}^{'}\rho_{3}^{3} \right) +\epsilon^{5}f_{51}(\rho_{3},\varphi,\alpha_{3},\zeta_{2}),\\
		\dot{\varphi}_{3}=2 \epsilon \left( \beta_{3}^{(4)} +\epsilon b_{1}^{'}\rho_{3} +\epsilon^{2} b_{2}^{'}\rho_{3}^{2} +\epsilon^{3}b_{3}^{'}\rho_{3}^{3}\right) +\epsilon^{5}f_{52}(\rho_{3},\varphi,\alpha_{3},\zeta_{2}),\\
		\dot{\varphi}_{1}=\beta_{1}^{(4)} +\epsilon^{\frac{3}{2}} g_{21}(\rho_{3},\varphi_{3},\alpha_{3},\zeta_{2})+\epsilon^{\frac{11}{2}}f_{53}(\rho_{3},\varphi,\alpha_{3},\zeta_{2}),\\
		\dot{\varphi}_{2}=\beta_{2}^{(4)} +\epsilon^{\frac{3}{2}} g_{22}(\rho_{3},\varphi_{3},\alpha_{3},\zeta_{2})+\epsilon^{\frac{11}{2}}f_{54}(\rho_{3},\varphi,\alpha_{3},\zeta_{2}),
	\end{array}
	\right.
\end{align}
where $\varphi=(\varphi_{1},\varphi_{2},\varphi_{3})^{T}$, $g_{2j}=O(\left\| \rho_{3} \right\|^{\frac{1}{2}})$, $a_{1}^{'}=a_{10}^{'}\zeta_{2}^{2} +O(\epsilon)$, $b_{1}^{'}=b_{10}^{'}\zeta_{2} +O(\epsilon)$,
\begin{align*}
	\beta_{j}^{(4)}=\tau_{0}\omega_{j}+2\epsilon^{3}(b_{j1}-\beta_{j1}-\sigma\beta_{j2})\alpha_{3}+\epsilon(b_{j1}+b_{j2}+(\gamma-1)\beta_{j1}+(1-\sigma)\beta_{j2})\zeta_{2}+\epsilon^{2}X(\epsilon^{2}\alpha_{3},\zeta_{2}),
\end{align*}
\begin{align*}
	(\beta_{3}^{(4)})^{2}=3(\zeta_{2}^{2}+2\epsilon^{2}\alpha_{3}\zeta_{2})-\epsilon^{4}\alpha_{3}^{2} +\epsilon X(\epsilon^{2}\alpha_{3},\zeta_{2}), \qquad j=1,2,
\end{align*}
with
\begin{align*}
	a_{10}^{'}=&\frac{2 \gamma }{3}(-4(\Gamma_{1}+\Gamma_{2}+\Sigma_{2}+\Sigma_{3}) +(7\Gamma_{1}+\Sigma_{1}+\Sigma_{2}-\gamma\Sigma_{1})\gamma +(\Gamma_{2}+\Gamma_{3}+7\Sigma_{3}-\sigma\Gamma_{3})\sigma)(0),\\
	b_{10}^{'}=&\frac{2 \gamma ^2 \sigma  (-\gamma -\sigma +2)}{3 \sqrt{3}} (0).
\end{align*}
The first equation of \eqref{8} without the team $\epsilon^{5}f_{51}$ has a positive equilibrium $\rho_{30}=-\frac{\alpha_{3}}{a_{1}^{'}}+O(\epsilon)$ for sufficiently small $\epsilon$ and $\alpha_{3} a_{1}^{'}<0$.
System \eqref{8} by the change
\begin{align*}
	\rho_{3}=\rho_{30}+\epsilon^{\frac{1}{2}}I_{3},
\end{align*}
is transformed into the form
\begin{align}\label{10}
	\left\{
	\begin{array}{ll}
		\dot{I}_{3}=4\epsilon^{3}\left( -\alpha_{3}I_{3}+\epsilon^{\frac{1}{2}}a_{1}^{'}I_{3}^{2}+\epsilon f_{61}(I_{3},\varphi,\alpha_{3},\zeta_{2}) \right),\\
		\dot{\varphi}_{3}=2 \epsilon \left( \beta_{3}^{(5)} +\epsilon^{\frac{3}{2}}(b_{1}^{'}+2\epsilon\rho_{30}b_{2}^{'})I_{3}+\epsilon^{3}h_{30}(I_{3},\varphi_{3},\alpha_{3},\zeta_{2}) +\epsilon^{4}f_{62}(I_{3},\varphi,\alpha_{3},\zeta_{2}) \right),\\
		\dot{\varphi}_{1}=\beta_{1}^{(5)} +\epsilon^{\frac{3}{2}} g_{31}(\varphi_{3},\alpha_{3},\zeta_{2})+\epsilon^{2} h_{31}(I_{3},\varphi_{3},\alpha_{3},\zeta_{2})+\epsilon^{\frac{11}{2}}f_{63}(I_{3},\varphi,\alpha_{3},\zeta_{2}),\\
		\dot{\varphi}_{2}=\beta_{2}^{(5)} +\epsilon^{\frac{3}{2}} g_{32}(\varphi_{3},\alpha_{3},\zeta_{2})+\epsilon^{2} h_{32}(I_{3},\varphi_{3},\alpha_{3},\zeta_{2})+\epsilon^{\frac{11}{2}}f_{64}(I_{3},\varphi,\alpha_{3},\zeta_{2}),
	\end{array}
	\right.
\end{align}
where $g_{3j}(\varphi_{3},\alpha_{3},\zeta_{2})$ are analytic in $\varphi_{3}$ with the mean value zero,
\begin{align*}
	\beta_{1}^{(5)}&=\tau_{0}\omega_{1}+\epsilon\omega_{11}\zeta_{2}+\epsilon^{3}\omega_{12}\alpha_{3}+\epsilon^{2}X(\epsilon^{2}\alpha_{3},\zeta_{2}),\\
	\beta_{2}^{(5)}&=\tau_{0}\omega_{2}+\epsilon\omega_{21}\zeta_{2}+\epsilon^{3}\omega_{22}\alpha_{3}+\epsilon^{2}X(\epsilon^{2}\alpha_{3},\zeta_{2}),\\
	\beta_{3}^{(5)}&=\sqrt{3}\zeta_{2}+\epsilon^{2}\sqrt{3}\alpha_{3}-\epsilon \frac{b_{10}^{'}}{a_{10}^{'}}\frac{\alpha_{3}}{\zeta_{2}}+\epsilon X(\epsilon^{2}\alpha_{3},\zeta_{2})+\epsilon^{2} X(\alpha_{3},\zeta_{2})
\end{align*}
with
\begin{align*}
	\omega_{11}&=\left(  -\frac{p_{22}}{p_{11}}(q_{11}-\beta_{11}p_{11}-\beta_{12}p_{21})+(q_{12}-\beta_{11}p_{12}-\beta_{12}p_{22}) \right) (0),\\
	\omega_{21}&=\left(  	-\frac{p_{22}}{p_{11}}(q_{21}-\beta_{21}p_{11}-\beta_{22}p_{21})+(q_{22}-\beta_{21}p_{12}-\beta_{22}p_{22}) \right) (0),
\end{align*}
\begin{align*}
	\omega_{12}=&2(b_{11}-\beta_{11}-\sigma\beta_{12})-\frac{8}{a_{10}^{'}}\left( \frac{1}{3}(\gamma b_{11}+b_{12})\left(-\gamma ^2 \Sigma _1+\gamma  \Gamma _1-4 \Sigma _3+\gamma  \Sigma _2+\Gamma _3 \sigma -\Gamma _2\right)\right. \\
	&\left. -\left(-\gamma  \Gamma _1-\Gamma _3 \sigma +\Gamma _2\right)b_{12}+\gamma\left( \gamma B_{120}+B_{111}+\sigma B_{102}\right)  \right)(0),\\
	\omega_{22}=&2(b_{21}-\beta_{21}-\sigma\beta_{22})-\frac{8}{a_{10}^{'}}\left( \frac{1}{3}(\gamma b_{21}+b_{22})\left(-\gamma ^2 \Sigma _1+\gamma  \Gamma _1-4 \Sigma _3+\gamma  \Sigma _2+\Gamma _3 \sigma -\Gamma _2\right)\right. \\
	&\left. -\left(-\gamma  \Gamma _1-\Gamma _3 \sigma +\Gamma _2\right)b_{22}+\gamma\left( \gamma B_{220}+B_{211}+\sigma B_{202}\right)  \right)(0).
\end{align*}
\par
Let $\Pi_{v}$ denote the parameter region where $\rho_{30}>0$, and take a convex bounded open subset $\Pi_{v}^{0} \subset \Pi_{v}$ with positive Lebesgue measure.
Assuming
\begin{align}\label{C0}
	\left( \omega_{11}\omega_{22}-\omega_{12}\omega_{21}\right) \left( \omega_{1}\omega_{21}-\omega_{2}\omega_{11} \right) \neq 0
\end{align}
holds, the following results can be obtained by applying KAM technique to \eqref{10}, noting the following facts:

1)By proof of theorem 2 in \cite{L13}, system \eqref{10} can be changed to this case where $n_{12}=0$, $n_{11}=n_{21}=1$, $n_{22}=2$, $q_{1}=3$, $q_{2}=\frac{1}{2}$, $q_{5}=1$, $q_{6}=\frac{3}{2}$, $q_{7}=2$ in (1.1) in \cite{L12};

2)System \eqref{10} is $C^{r}$, ($r$ is any positive integer);

3) The analytic part can be proven by Lemma 4.1 in \cite{L12}, and the proof of the geometric part is similar to that of Theorem 2 in \cite{L13}.

\begin{theorem}\label{theorem4}
	If \eqref{C0} holds, then system \eqref{10} has quasi-periodic invariant 3-tori for most parameter values of $\Pi_{v}^{0}$ and sufficiently small $\epsilon$.
\end{theorem}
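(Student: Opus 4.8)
The plan is to observe that the long chain of normal-form, scaling and translation changes carried out above has already cast the amplitude--phase equations in exactly the shape required by the degenerate KAM theorem of \cite{L12} (as used in \cite{L13}), so the proof runs parallel to that of Theorem \ref{theorem3}: one checks the hypotheses and invokes the theorem. Deleting the perturbations $\epsilon^{5}f_{61}$, $\epsilon^{\frac{11}{2}}f_{6i}$ and setting $I_{3}=0$, system \eqref{10} reduces to the linear flow on $\mathbb{T}^{3}$ with internal frequency vector $\omega(\alpha_{3},\zeta_{2},\epsilon)=\bigl(\beta_{1}^{(5)},\beta_{2}^{(5)},2\epsilon\beta_{3}^{(5)}\bigr)$; its single normal direction $I_{3}$ has linearized coefficient $-4\epsilon^{3}\alpha_{3}$, which is nonzero on $\Pi_{v}$ since $\rho_{30}=-\alpha_{3}/a_{1}'>0$ forces $\alpha_{3}\neq0$, so the invariant $3$-torus $\{I_{3}=0\}$ is normally hyperbolic and no small divisors come from the $I_{3}$-direction. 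The regularity hypothesis holds because $f$, hence every coefficient in \eqref{10}, is $C^{r}$ for an arbitrary finite $r$, and the $\varphi_{3}$-dependent mean-zero terms $\epsilon^{\frac{3}{2}}g_{3j}$ together with the $\epsilon^{2}h_{3j}$ terms belong to the admissible perturbation class for the scaling exponents recorded above. The analytic part of the KAM iteration is then supplied by Lemma 4.1 in \cite{L12} and the measure (geometric) part by the argument of Theorem 2 in \cite{L13}; what remains to be verified by hand is the arithmetic non-degeneracy, which is exactly what \eqref{C0} encodes.

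Accordingly, the core of the argument is to check the R\"ussmann-type non-degeneracy of the parametrized frequency map $(\alpha_{3},\zeta_{2})\mapsto\omega(\alpha_{3},\zeta_{2},\epsilon)$ demanded by \cite{L12}. I would expand $\omega=\omega^{*}+\epsilon\,\omega^{(1)}(\zeta_{2})+\epsilon^{2}\omega^{(2)}(\alpha_{3},\zeta_{2})+O(\epsilon^{3})$ with $\omega^{*}=\tau_{0}(\omega_{1},\omega_{2},0)^{T}$ and $\omega^{(1)}(\zeta_{2})=(\omega_{11}\zeta_{2},\,\omega_{21}\zeta_{2},\,2\sqrt{3}\,\zeta_{2})$, and read off from the formulas for $\beta_{j}^{(5)}$ that
\[
\frac{\partial(\beta_{1}^{(5)},\beta_{2}^{(5)})}{\partial(\zeta_{2},\alpha_{3})}=\begin{pmatrix}\epsilon\omega_{11} & \epsilon^{3}\omega_{12}\\ \epsilon\omega_{21} & \epsilon^{3}\omega_{22}\end{pmatrix}+\mathrm{h.o.t.},\qquad \det=\epsilon^{4}\bigl(\omega_{11}\omega_{22}-\omega_{12}\omega_{21}\bigr)+\mathrm{h.o.t.}
\]
The first factor of \eqref{C0} makes this determinant nonzero, so $(\alpha_{3},\zeta_{2})\mapsto(\beta_{1}^{(5)},\beta_{2}^{(5)})$ is a diffeomorphism onto an $O(\epsilon)$-neighbourhood of $\tau_{0}(\omega_{1},\omega_{2})$; using $(\beta_{1}^{(5)},\beta_{2}^{(5)})$ as two of the parameters, a resonant vector $k=(k_{1},k_{2},k_{3})\in\mathbb{Z}^{3}\setminus\{0\}$ with $k_{3}=0$ then has gradient $(k_{1},k_{2})\neq0$, while for $k_{3}\neq0$ one uses either the $\zeta_{2}$-derivative $\partial_{\zeta_{2}}\langle k,\omega\rangle=\epsilon\,(k_{1}\omega_{11}+k_{2}\omega_{21}+2\sqrt{3}\,k_{3})+O(\epsilon^{2})$ or, when that coefficient is small, the $\alpha_{3}$-derivative, whose leading term $-2\epsilon^{2}k_{3}(b_{10}'/a_{10}')\zeta_{2}^{-1}$ does not vanish. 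The second factor of \eqref{C0} guarantees that $(\omega_{1},\omega_{2})$ and $(\omega_{11},\omega_{21})$ are not parallel, so the lattice directions along which the fast combination $k_{1}\omega_{1}+k_{2}\omega_{2}$ is small are not directions along which $k_{1}\omega_{11}+k_{2}\omega_{21}$ is small (the case $\omega_{1}/\omega_{2}\in\mathbb{Q}$ included); hence for every $k\neq0$ some parameter-derivative of $\langle k,\omega\rangle$ admits a lower bound of the form $c\,\epsilon^{p}|k|^{-q}$, which is the input needed for the excision estimate.

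Granting the non-degeneracy, I would conclude exactly as in Theorems 2.2--2.3 of \cite{L12} and Theorem 2 of \cite{L13}: fixing the convex bounded open $\Pi_{v}^{0}\subset\Pi_{v}$ of positive Lebesgue measure, for every $0<\kappa\ll1$ there is $\epsilon^{\star}>0$ such that for $0<\epsilon<\epsilon^{\star}$ there is a Cantor set $\Pi_{\kappa}\subset\Pi_{v}^{0}$ with $\mathrm{meas}(\Pi_{v}^{0}\setminus\Pi_{\kappa})\to0$ as $\kappa\to0$, and for each parameter value in $\Pi_{\kappa}$ system \eqref{10} carries a quasi-periodic invariant $3$-torus close to $\{I_{3}=0\}$; this is precisely the statement that \eqref{10} has quasi-periodic invariant $3$-tori for most parameter values of $\Pi_{v}^{0}$.

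The step I expect to be the main obstacle is the bookkeeping in the non-degeneracy check. The three frequency corrections enter at several different powers of $\epsilon$ — order $\epsilon$ from $\zeta_{2}$, orders $\epsilon^{2}$ and $\epsilon^{3}$ from $\alpha_{3}$, with the slow frequency $2\epsilon\beta_{3}^{(5)}$ itself only of size $\epsilon$ — so one must check carefully that the graded non-degeneracy hypothesis of \cite{L12} really does collapse to the two determinant conditions in \eqref{C0}, and that the resulting small-divisor lower bounds are strong enough to beat the various $\epsilon$-graded perturbation terms in \eqref{10} for all sufficiently small $\epsilon$. The remaining points — normal hyperbolicity of the torus, $C^{r}$-smoothness, and membership of the $g_{3j}$, $h_{3j}$ terms in the admissible perturbation class — are routine once this is settled.
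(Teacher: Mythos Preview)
Your proposal is correct and follows essentially the same route as the paper: both reduce the claim to the degenerate KAM theorem of \cite{L12} (in the form used in \cite{L13}), checking $C^{r}$-smoothness, identifying the scaling exponents so that \eqref{10} fits the template (1.1) of \cite{L12}, and splitting the argument into an analytic part (Lemma 4.1 of \cite{L12}) and a geometric/measure part (Theorem 2 of \cite{L13}). The paper's own treatment is extremely terse---just three bullet points recording the exponents $n_{11}=n_{21}=1$, $n_{12}=0$, $n_{22}=2$, $q_{1}=3$, $q_{2}=\tfrac12$, $q_{5}=1$, $q_{6}=\tfrac32$, $q_{7}=2$ and invoking the cited results---whereas you spell out the R\"ussmann-type non-degeneracy check in detail and explain why both factors of \eqref{C0} enter; this extra work is consistent with the paper but goes beyond what it writes down.
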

\begin{corollary}\label{corollary2}
	For any fixed $(\varepsilon_{0},\tau_{0})$ with the corresponding eigenvalues $\pm {\rm i} \omega_{1}$, $\pm {\rm i} \omega_{2}$ satisfying \eqref{ceq}, in addition to the assumptions \eqref{H0}, \eqref{H1}, \eqref{H3}, \eqref{HH0}, \eqref{HH78}, \eqref{HH9}, if \eqref{C1} and \eqref{C0} hold, then system \eqref{fulleq} has quasi-periodic invariant 3-tori for most parameter values near Hopf bifurcation curve H and in the direction of the Hopf bifurcation.
\end{corollary}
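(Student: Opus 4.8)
The plan is to deduce Corollary~\ref{corollary2} from Theorem~\ref{theorem4} by inverting, link by link, the chain of reductions that carries system~\eqref{fulleq} into system~\eqref{10}, checking that each link is invertible and that the ``most parameter values'' clause survives. First I would record the chain. Under \eqref{H0} and \eqref{H1}, the center-manifold reduction of Section~\ref{sec2.2} together with Theorems~\ref{theorem1} and~\ref{theorem2} yields a local smooth orbital equivalence between \eqref{fulleq} near the origin and the polar normal form \eqref{r-nf-done}, and a local diffeomorphism $(\alpha_1,\alpha_2)\mapsto(\delta_1,\delta_2)$ fixing $0$. Under \eqref{H3}, \eqref{HH0}, \eqref{HH78}, \eqref{HH9} --- so that Theorem~\ref{theorem2.3}(ii) applies and a secondary limit cycle, hence a truncated $3$-torus, exists --- and the nonresonance \eqref{C1}, which legitimizes normalizing to eleventh order, Section~\ref{sec3.2} continues the reduction \eqref{r-nf-done} $\to$ \eqref{2} $\to$ \eqref{3} $\to$ \eqref{4} $\to$ \eqref{6} $\to$ \eqref{8} $\to$ \eqref{10} by means of: the substitutions $\rho_i=r_i^2$ and $\eta_1=p_{11}\rho_1$, $\eta_2=-p_{22}\rho_2$; the bookkeeping rescalings $\eta\to\epsilon\eta$, $\delta\to\epsilon\delta$, $\rho_3\to\epsilon\rho_3$, $\alpha_3\to\epsilon^2\alpha_3$; the near-identity normal-form changes; the invertible linear change $\eta=Tu$ with $\det T=-8{\rm i}\,c_{12}\beta_3\neq0$ near $H^\star$; the translations $\eta\to\eta_0+\eta$ and $\rho_3\to\rho_{30}+\epsilon^{1/2}I_3$ to the positive equilibria; and the polar change $u_1=r_3e^{{\rm i}\varphi_3}$, a diffeomorphism off $r_3=0$. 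On the parameter side \eqref{H3} makes $(\delta_1,\delta_2)\mapsto(\zeta_1,\zeta_2)=(\eta_{10},\eta_{20})$ a diffeomorphism, and $\partial\zeta_1/\partial\alpha_3=2+O(\epsilon)\neq0$ makes $(\alpha_3,\zeta_2)\mapsto(\zeta_1,\zeta_2)$ one, so the full parameter path $(\varepsilon,\tau)\leftrightarrow(\alpha_1,\alpha_2)\leftrightarrow(\delta_1,\delta_2)\leftrightarrow(\zeta_1,\zeta_2)\leftrightarrow(\alpha_3,\zeta_2)$ is, for each small $\epsilon$, a diffeomorphism between fixed domains.

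With the chain in hand the argument is brief. Fix a convex bounded open $\Pi_v^0\subset\Pi_v$ with positive Lebesgue measure; for every sufficiently small $\epsilon>0$ and $0<\kappa\ll1$, Theorem~\ref{theorem4} (via \cite{L12,L13}) produces a Cantor set $\Pi_\kappa\subset\Pi_v^0$ with $\mathrm{meas}(\Pi_v^0\setminus\Pi_\kappa)\to0$ as $\kappa\to0$, over each point of which \eqref{10} carries a quasi-periodic invariant $3$-torus. Pulling such a torus back through the phase transformations above: the translations and the polar change turn it into a quasi-periodic invariant $3$-torus of \eqref{8}, hence of \eqref{6} and \eqref{4} (the near-identity change only conjugates the flow), hence of \eqref{3} and \eqref{2}, and, after undoing the $\epsilon$-rescalings and $\eta_i=\pm p_{ii}\rho_i$, $\rho_i=r_i^2$, of \eqref{r-nf-done}; the torus lies in $\{r_1,r_2,r_3>0\}$ since $\eta_0$ and $\rho_{30}$ are positive equilibria, so no polar substitution degenerates. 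The local smooth orbital equivalence of Theorem~\ref{theorem2}, which already incorporates the center-manifold reduction, then carries it into \eqref{fulleq}; as orbital equivalence only reparametrizes time along orbits, the invariant $3$-torus persists and supports a quasi-periodic flow. Running the parameter path backwards sends $\Pi_\kappa$ onto a subset $R_{\epsilon,\kappa}$ of an open region $R_\epsilon$ of the $(\varepsilon,\tau)$-plane with $\mathrm{meas}(R_\epsilon\setminus R_{\epsilon,\kappa})\to0$ as $\kappa\to0$; since $\Pi_v\subset\{\rho_{30}>0\}=\{\alpha_3 a_1'<0\}$ is precisely the side of $H^\star$ --- hence of the Hopf curve $H$ --- on which the secondary limit cycle, equivalently the $3$-torus, exists, $R_\epsilon$ lies near $H$ on the bifurcating side and shrinks to $(\varepsilon_0,\tau_0)$ as $\epsilon\to0$, which is the claim of Corollary~\ref{corollary2}.

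I expect the genuine difficulty to sit in Theorem~\ref{theorem4} rather than in the corollary: one must check that \eqref{10} belongs to the class (1.1) of \cite{L12} with the exponents $q_1=3$, $q_2=\tfrac12$, $q_5=1$, $q_6=\tfrac32$, $q_7=2$ announced before the theorem, that the perturbation orders $\epsilon^5$, $\epsilon^{11/2}$ are small enough against the normal-form orders $\epsilon^3$, $\epsilon$, and that \eqref{C0} is exactly the nondegeneracy required --- its factor $\omega_{11}\omega_{22}-\omega_{12}\omega_{21}$ playing for the slow frequencies $(\beta_1^{(5)},\beta_2^{(5)})$ the role of \eqref{the3-mes} in Theorem~\ref{theorem3}, and its factor $\omega_1\omega_{21}-\omega_2\omega_{11}$ coupling them to the fast frequency $\beta_3^{(5)}$. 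Within the corollary the only points needing care are (a) that ``most parameter values'' passes through the parameter path --- immediate, since a finite composition of diffeomorphisms on fixed domains sends null sets to null sets --- and (b) identifying ``near $H$ and in the direction of the Hopf bifurcation'' with the image of the wedge $\{\alpha_3 a_1'<0,\ \zeta_2>0\}$; neither needs new estimates.
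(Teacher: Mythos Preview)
Your proposal is correct and follows essentially the same approach as the paper, which gives no explicit proof of Corollary~\ref{corollary2} but relies on the reversibility of the coordinate and parameter transformations---the very argument spelled out before Corollary~\ref{corollary1}. You have simply made that one-line justification rigorous by tracing the chain \eqref{fulleq} $\to$ \eqref{r-nf-done} $\to$ \eqref{2} $\to\cdots\to$ \eqref{10} link by link, verifying invertibility at each step and noting that diffeomorphisms preserve the ``most parameter values'' clause.
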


\section{Numerical simulations}\label{sec4}
In this section, we shall present some numerical examples of systems \eqref{fulleq} and \eqref{cen-mani-eq} with $c=d=0$ to verify our main theoretical analysis.
In fact, the invariant 2-tori and 3-tori of the system \eqref{cen-mani-eq} are the same as the invariant 2-tori and 3-tori of the system \eqref{fulleq}.
However, the variables of system \eqref{cen-mani-eq} are complex numbers, so we first change them into real numbers.
Letting $y_{i}={\rm Re}\left\lbrace z_{i} \right\rbrace$, $y_{i+1}={\rm Im}\left\lbrace z_{i} \right\rbrace$, $i=1,3$, system \eqref{cen-mani-eq} is changed into
\begin{align}\label{last}
	\dot{y}=J^{*} y+T_{0} g(T_{0}^{-1}y,\alpha);
\end{align}
where
\begin{align*}
	J^{*}=\left(
	\begin{array}{cccc}
		0 & {\rm i} \tau_{0} \omega_{1} & 0 & 0 \\
		{\rm i} \tau_{0} \omega_{1} & 0 & 0 & 0 \\
		0 & 0 & 0 & {\rm i} \tau_{0} \omega_{2} \\
		0 & 0 & {\rm i} \tau_{0} \omega_{2} & 0 \\
	\end{array}
	\right), \qquad
	T_{0}=\left(
	\begin{array}{cccc}
		\frac{1}{2} & \frac{1}{2} & 0 & 0 \\
		\frac{1}{2} & -\frac{1}{2} & 0 & 0 \\
		0 & 0 & \frac{1}{2} & \frac{1}{2} \\
		0 & 0 & \frac{1}{2} & -\frac{1}{2} \\
	\end{array}
	\right).
\end{align*}

\begin{example}
	Let $a=1.2$, $b=-6$.
	When $\varepsilon_{0} \approx 0.2231$ and $\tau_{0} \approx 7.8628$, we already have $\omega_{1} \approx 1.0608$, $\omega_{2} \approx 0.9083$, $p_{11}(0)\approx-7.2397$, $p_{22}(0)\approx-13.6504$, $\gamma(0) \approx 1.0607$, $\sigma(0) \approx 3.7710$, $\alpha_{1} \approx 0.1016 \delta_{1}+0.1252 \delta_{2}$, $\alpha_{2} \approx 0.3708 \delta_{1}-0.2511 \delta_{2}$.
	It is a simple case.
	Moreover, we obtain
	\begin{align*}
		\det \left( \frac{\partial \omega_{1}^{0}(\delta)}{\partial \delta}  \right) \approx -4.2542.
	\end{align*}
	Thus the conditions in Corollary \ref{corollary1} hold for $-1.0041\alpha_{1}<\alpha_{2}<0.6108\alpha_{1}$.
	Figure \ref{figure-2} shows a quasi-periodic invariant 2-torus of system \eqref{fulleq} for $(\alpha_{1},\alpha_{2})=(0.0012,-0.001)$ in region 4 of Figure \ref{2-fzt}, and Figure \ref{figure-2D} shows this quasi-periodic invariant 2-torus disappears when $(\alpha_{1},\alpha_{2})=(-0.009,-0.005)$.
	Figure \ref{Figure2-cm} shows some projections of the quasi-periodic invariant 2-torus of system \eqref{last} when $(\alpha_{1},\alpha_{2})=(0.0012,-0.001)$.
\end{example}
\begin{figure}
	\centering
	\subfloat[]{\includegraphics[width=80mm, height=80mm]{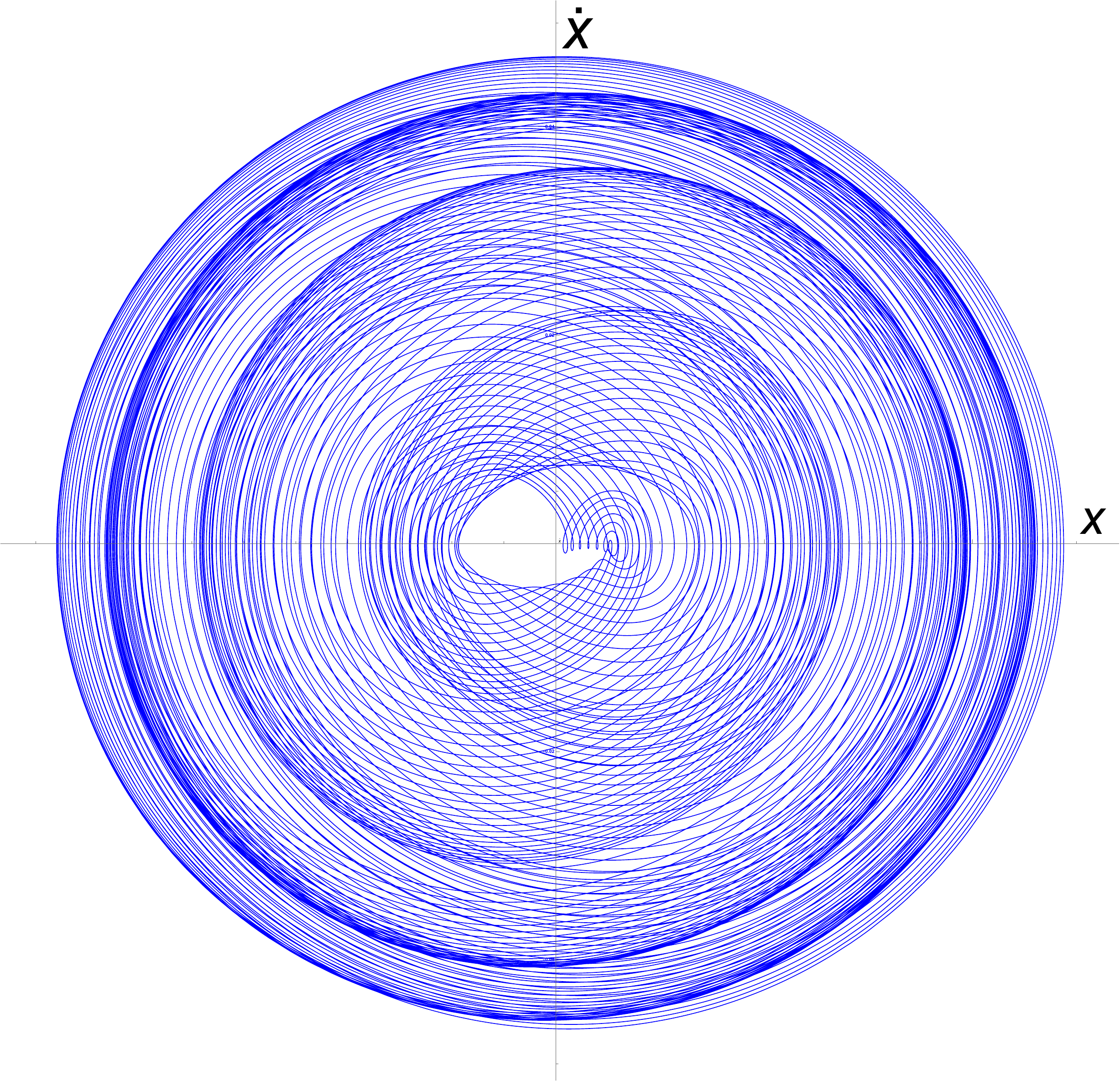}\label{2X}}
	\hfill
	\subfloat[]{\includegraphics[width=150mm, height=65mm]{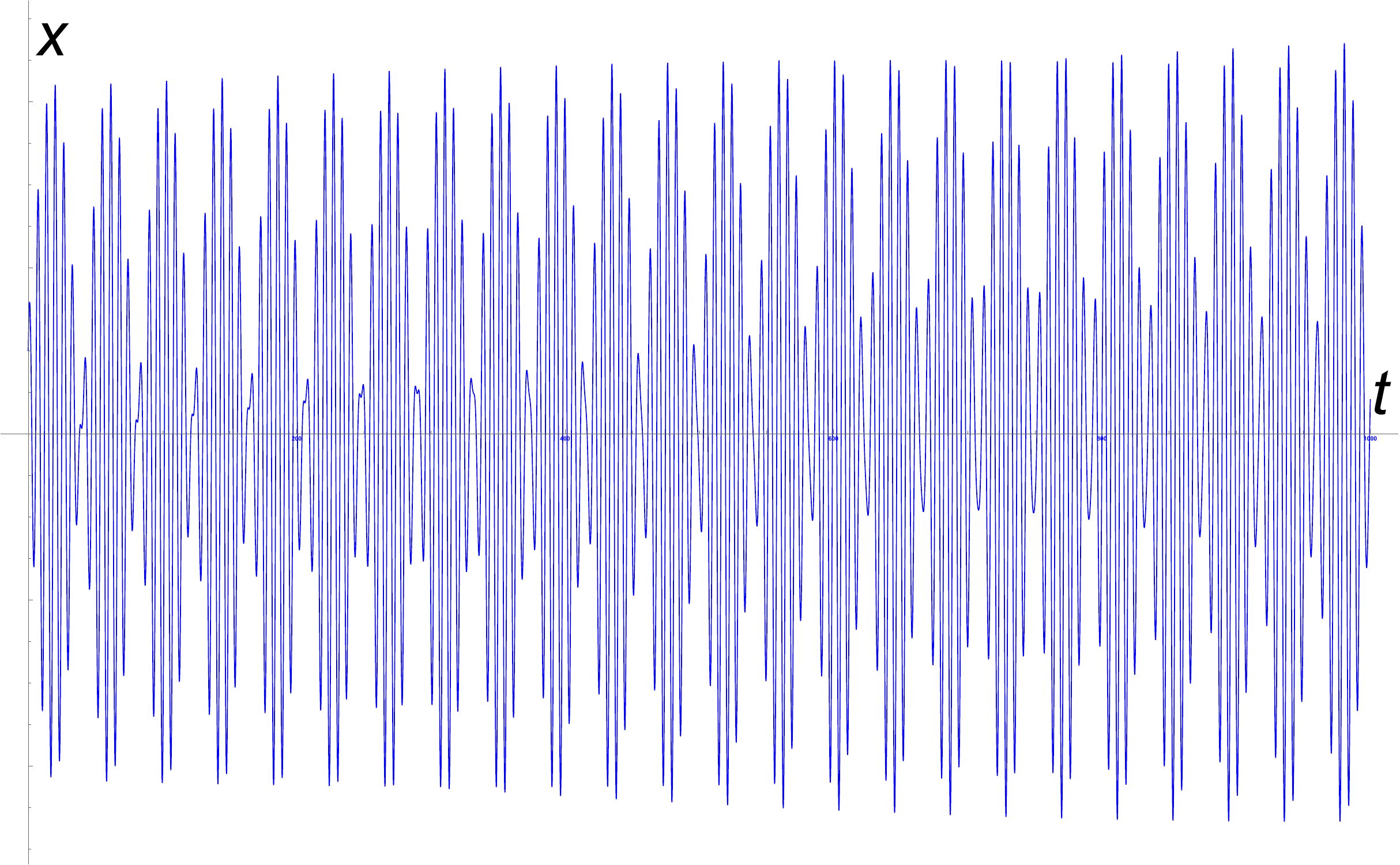}\label{2Y}}
	\caption{When $(\alpha_{1},\alpha_{2})=(0.0012,-0.001)$, $t\in \left[ 0,1000 \right]$, system \eqref{fulleq} with $(x(0),\dot{x}(0))=(0.01,0.01)$ has a quasi-periodic solution: the phase portrait (a) and the time evolution (b).}
	\label{figure-2}
\end{figure}
\begin{figure}
	\centering
	\includegraphics[width=150mm, height=70mm]{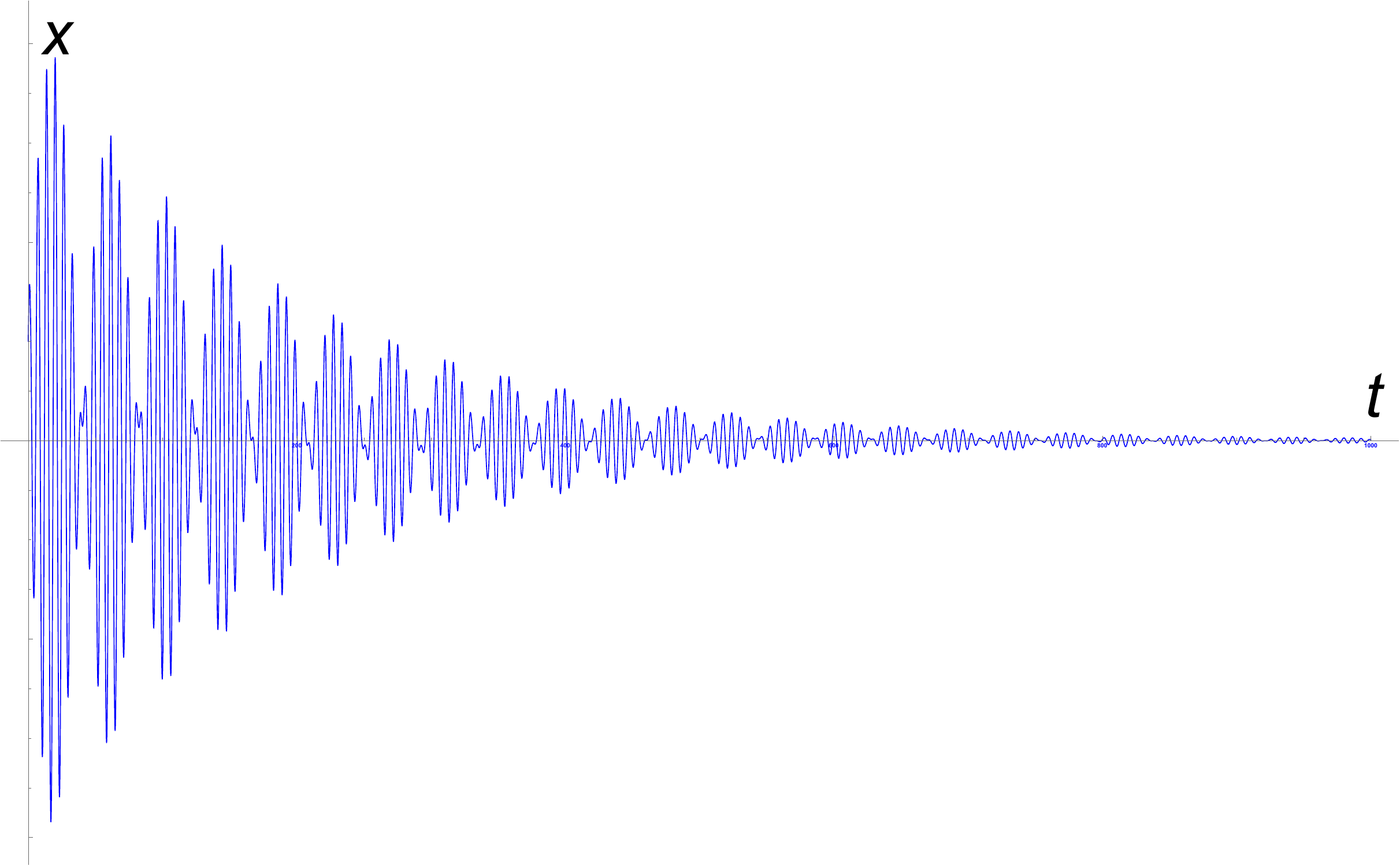}\label{2Z}
	\caption{When $(\alpha_{1},\alpha_{2})=(-0.009,-0.005)$, $t\in \left[ 0,1000 \right]$, the evolution of solution of system \eqref{fulleq} with $(x(0),\dot{x}(0))=(0.01,0.01)$.}
	\label{figure-2D}
\end{figure}
\begin{figure}[htbp]
	\centering
	\begin{minipage}{0.49\linewidth}
		\centering
		\includegraphics[width=80mm]{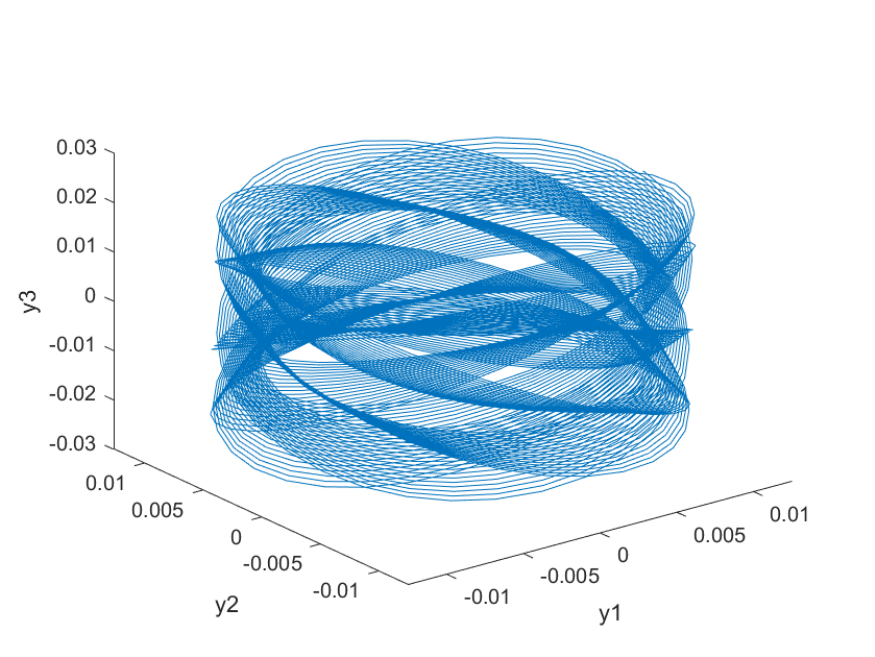}
	\end{minipage}
	\begin{minipage}{0.49\linewidth}
		\centering
		\includegraphics[width=80mm]{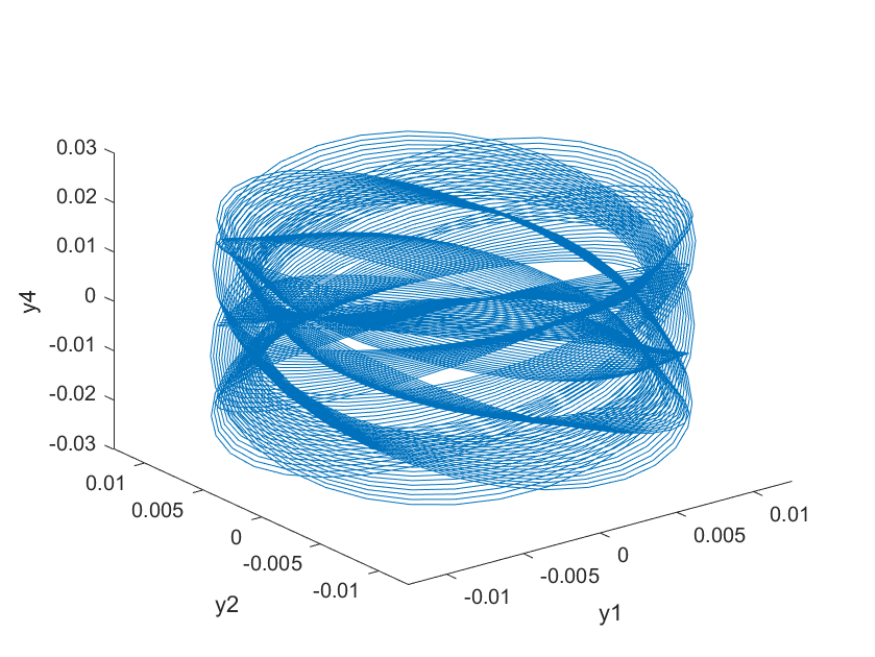}
	\end{minipage}

	\begin{minipage}{0.49\linewidth}
		\centering
		\includegraphics[width=80mm]{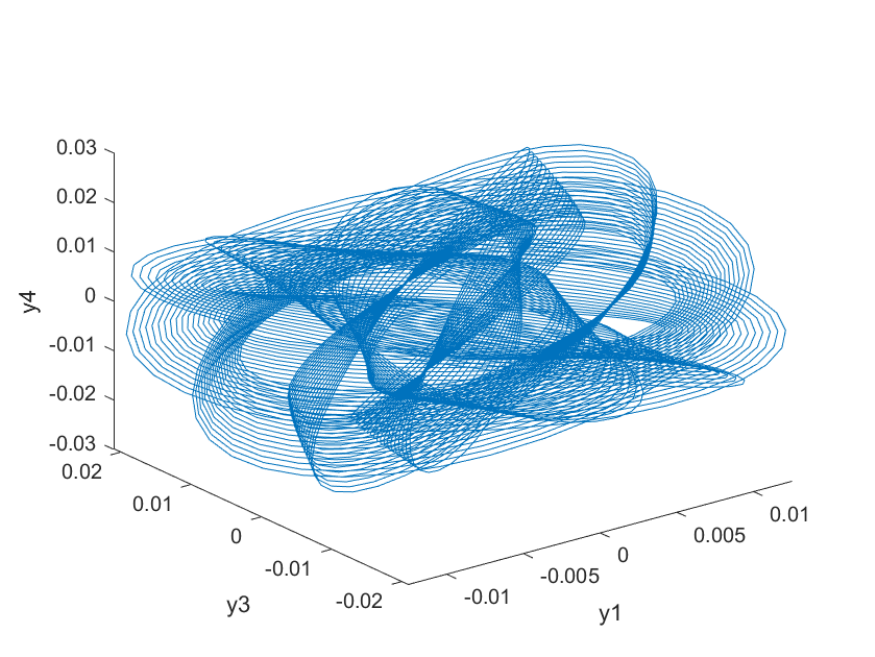}
	\end{minipage}
	\begin{minipage}{0.49\linewidth}
		\centering
		\includegraphics[width=80mm]{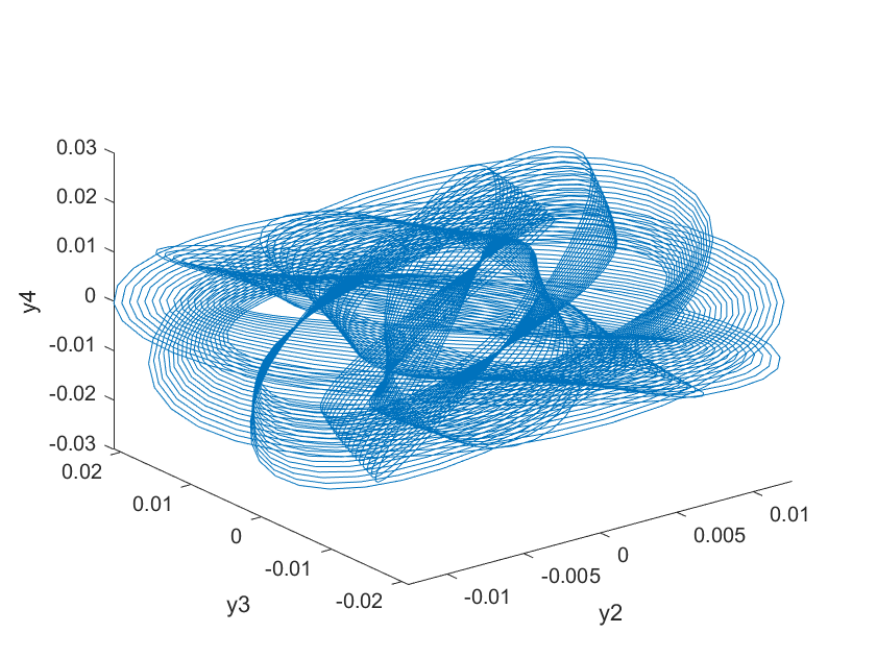}
	\end{minipage}

	\begin{minipage}{0.3\linewidth}
		\centering
		\includegraphics[width=55mm]{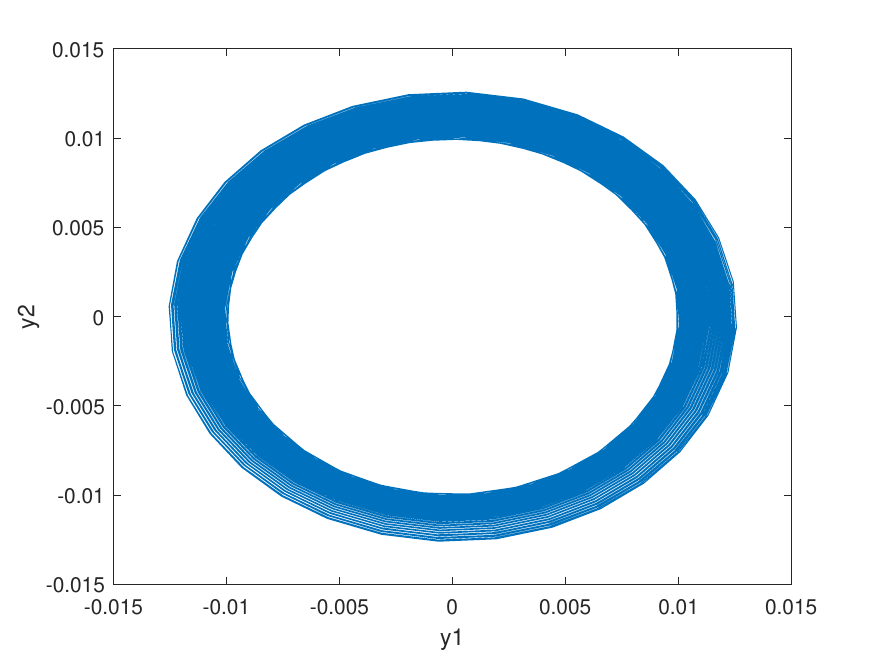}
	\end{minipage}
	\begin{minipage}{0.3\linewidth}
		\centering
		\includegraphics[width=55mm]{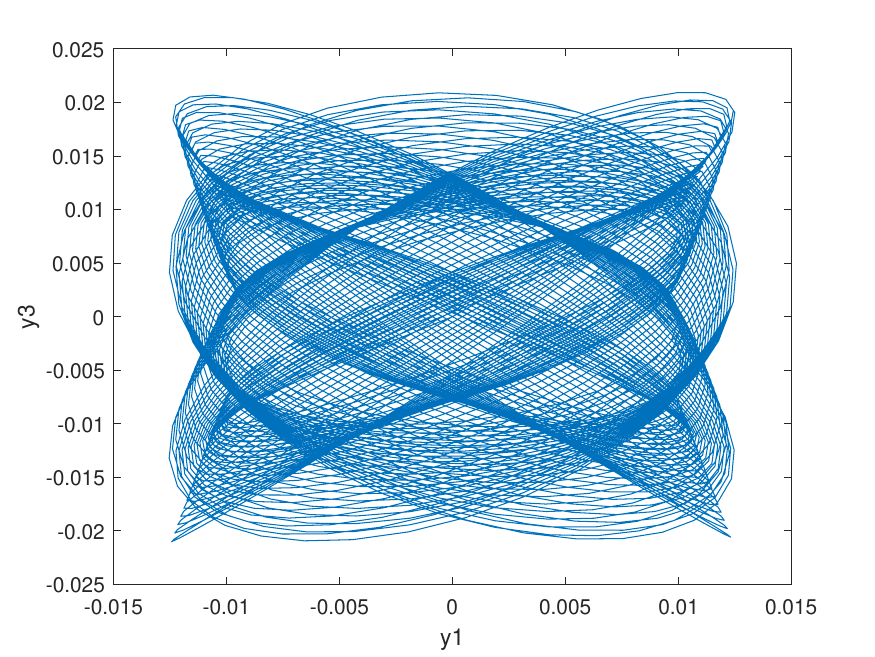}
	\end{minipage}
	\begin{minipage}{0.3\linewidth}
		\centering
		\includegraphics[width=55mm]{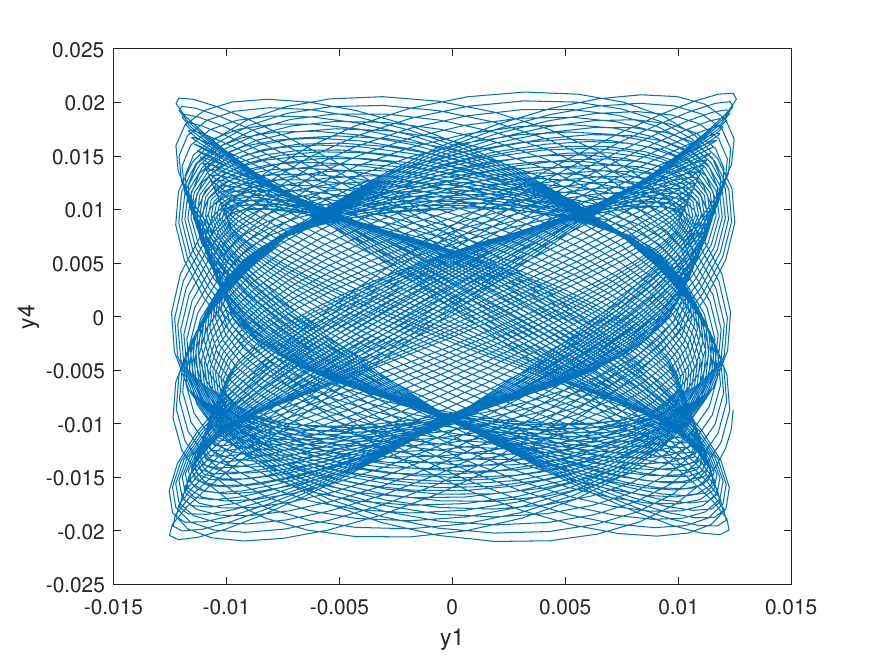}
	\end{minipage}

	\begin{minipage}{0.3\linewidth}
		\centering
		\includegraphics[width=55mm]{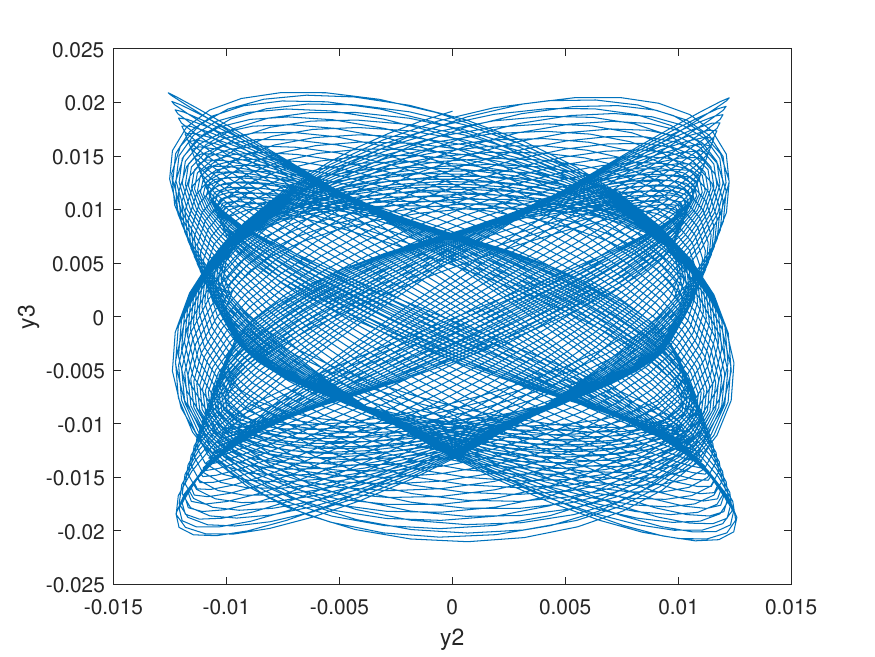}
	\end{minipage}
	\begin{minipage}{0.3\linewidth}
		\centering
		\includegraphics[width=55mm]{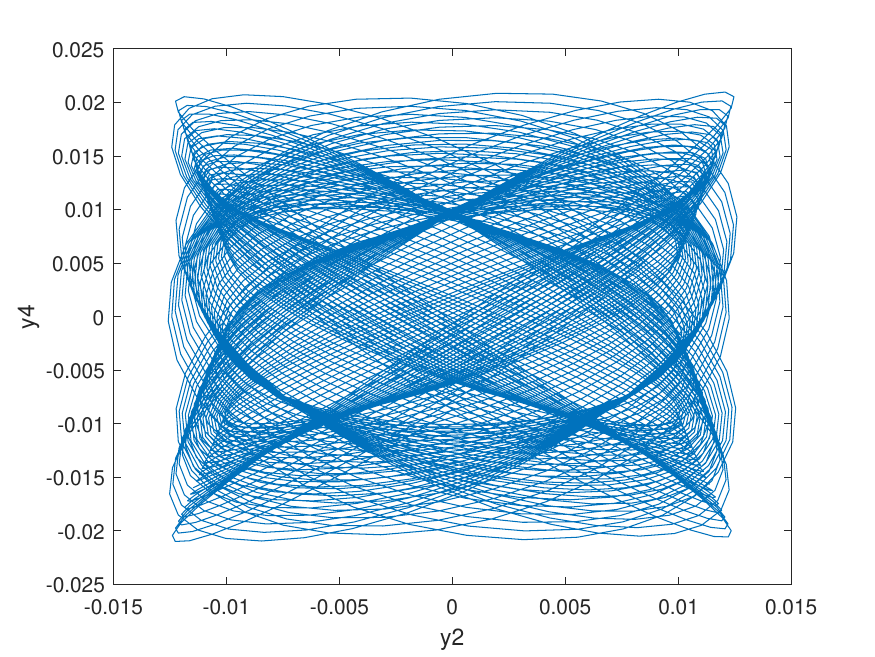}
	\end{minipage}
	\begin{minipage}{0.3\linewidth}
		\centering
		\includegraphics[width=55mm]{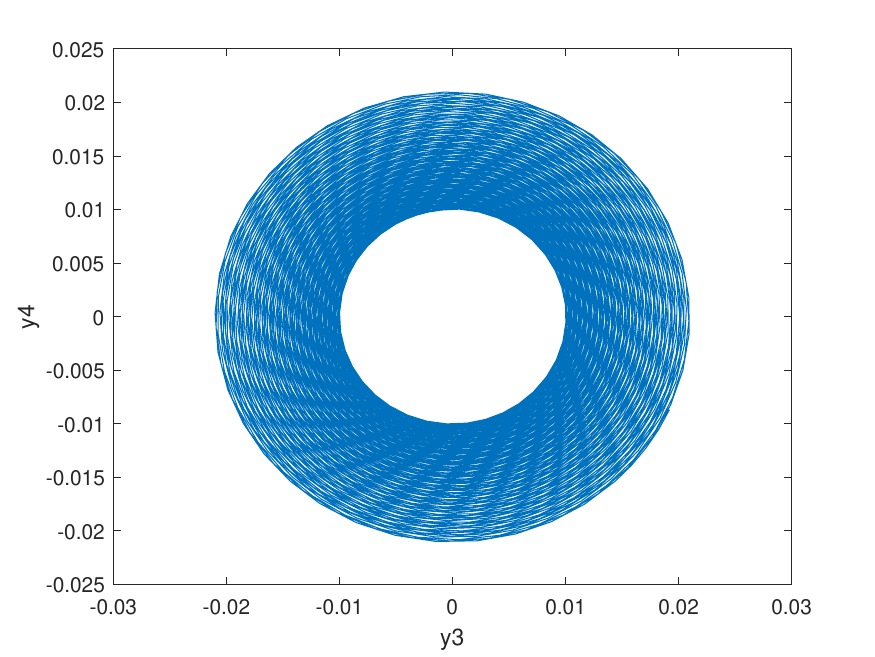}
	\end{minipage}
	\caption{When $(\alpha_{1},\alpha_{2})=(0.0012,-0.001)$ and $t\in \left[ 0,100 \right]$, the phase portraits of system \eqref{last}.}
	\label{Figure2-cm}
\end{figure}

\begin{example}
	Let $a=1$, $b=\frac{1}{7}$.
	When $\varepsilon_{0} \approx 0.2533$ and $\tau_{0}=2.5\pi$, we already have $\omega_{1}=1$, $\omega_{2} \approx 0.9674$, $p_{11}(0)\approx -0.4837$, $p_{22}(0)\approx 0.3976$, $\gamma(0) \approx -2.4333$, $\sigma(0) \approx -1.6439$, $\alpha_{1} \approx 0.1207 \delta_{1}+0.1262 \delta_{2}$, $\alpha_{2} \approx 0.1276 \delta_{1}$, $l_{10}\approx-6591.28$.
	It is a difficult case.
	Furthermore, we obtain $\omega_{1}\omega_{21}-\omega_{2}\omega_{11} \approx -59.9828$ and $\omega_{11}\omega_{22}-\omega_{12}\omega_{21} \approx -1097.82$.
	According to Corollary \ref{corollary2}, it follows that system \eqref{fulleq} has a stable quasi-periodic invariant 3-tori for most parameter values $(\alpha_{1},\alpha_{2})$ satisfying $0.1846\alpha_{2}-876.087\alpha_{2}^{2}<\alpha_{1}<0.1846\alpha_{2}$.
	Figure \ref{figure-3} shows a quasi-periodic invariant 3-torus of system \eqref{fulleq} for $(\alpha_{1},\alpha_{2})=(-0.00006,-0.00024)$ in region 6 of Figure \ref{3-fzt}.
	Since the quasi-periodic invariant torus can not be shown well on a plane, Figure \ref{Figure3-cm} shows some projections of a quasi-periodic invariant 3-torus of system \eqref{last} when $(\alpha_{1},\alpha_{2})=(-0.00006,-0.00024)$.
\end{example}
\begin{figure}
	\centering
	\subfloat[]{\includegraphics[width=80mm, height=80mm]{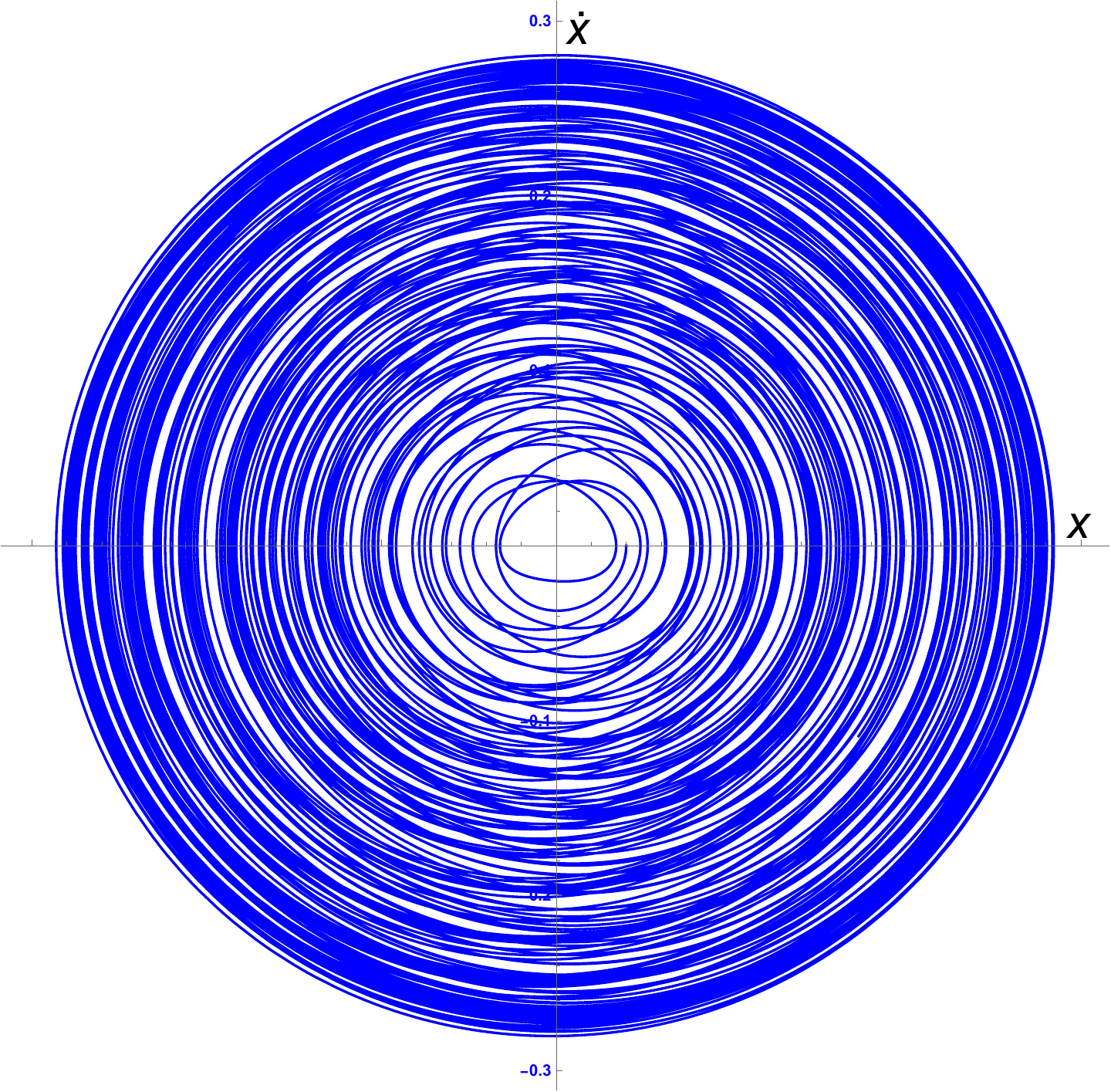}\label{2X}}
	\hfill
	\subfloat[]{\includegraphics[width=150mm, height=65mm]{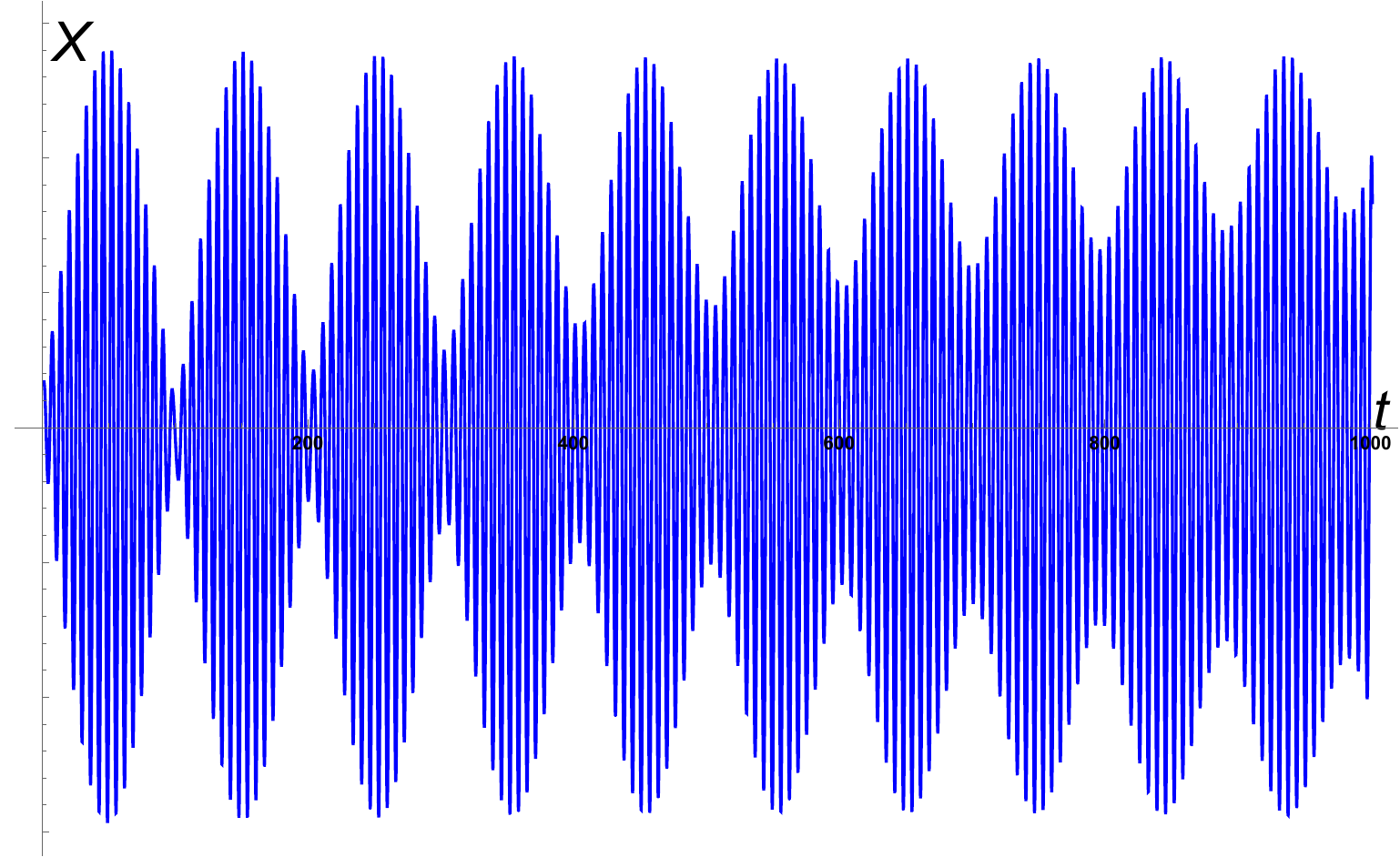}\label{2Y}}
	\caption{When $(\alpha_{1},\alpha_{2})=(-0.00006,-0.00024)$, $t\in \left[ 0,1000 \right]$, system \eqref{fulleq} with $(x(0),\dot{x}(0))=(0.04,0)$ has a quasi-periodic solution: the phase portrait (a) and the time evolution (b).}
	\label{figure-3}
\end{figure}
\begin{figure}[htbp]
	\centering
	\begin{minipage}{0.49\linewidth}
		\centering
		\includegraphics[width=70mm]{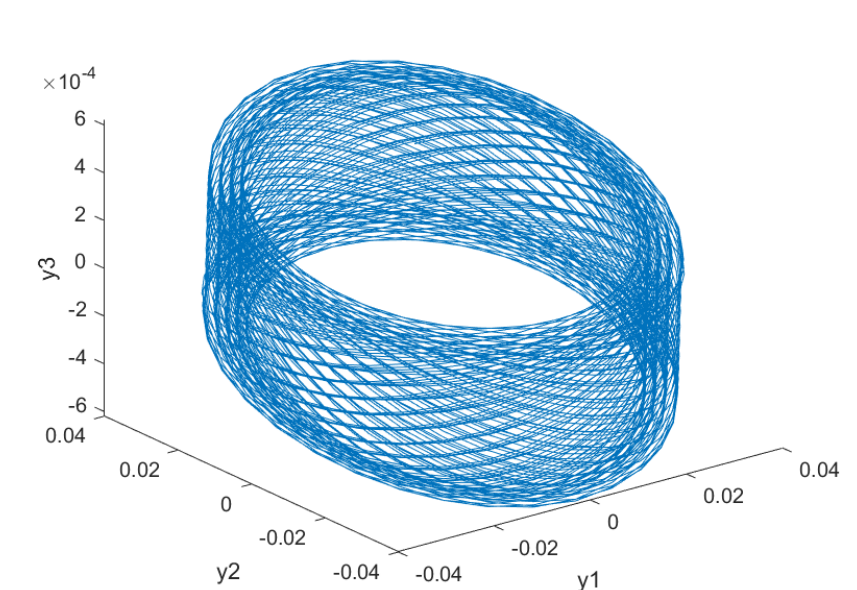}
	\end{minipage}
	\begin{minipage}{0.49\linewidth}
		\centering
		\includegraphics[width=70mm]{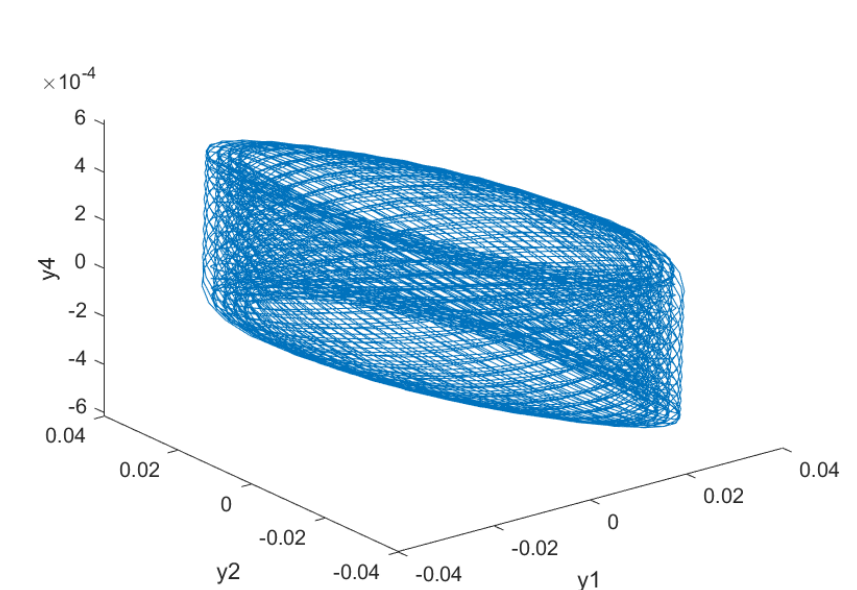}
	\end{minipage}
	
	\begin{minipage}{0.49\linewidth}
		\centering
		\includegraphics[width=70mm]{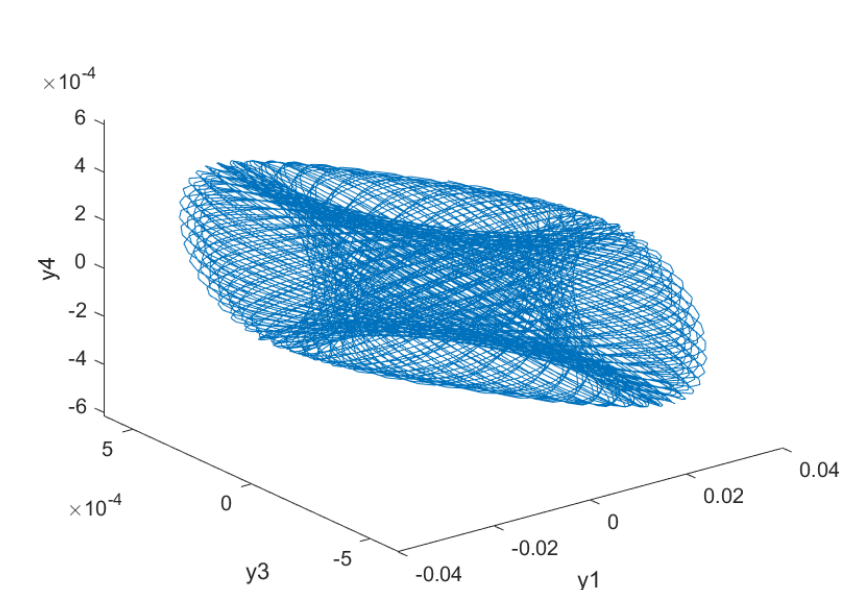}
	\end{minipage}
	\begin{minipage}{0.49\linewidth}
		\centering
		\includegraphics[width=70mm]{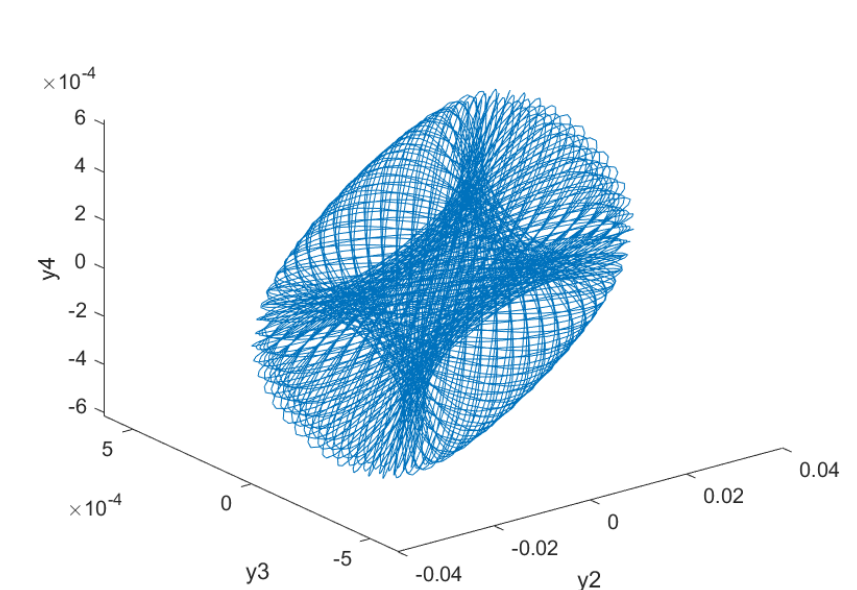}
	\end{minipage}
	
	\begin{minipage}{0.3\linewidth}
		\centering
		\includegraphics[width=55mm]{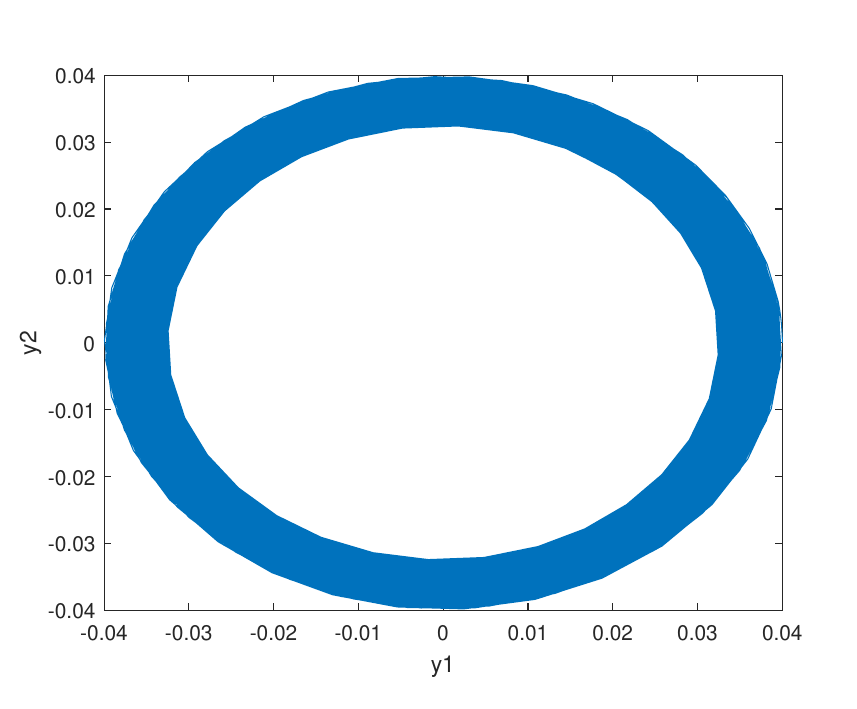}
	\end{minipage}
	\begin{minipage}{0.3\linewidth}
		\centering
		\includegraphics[width=55mm]{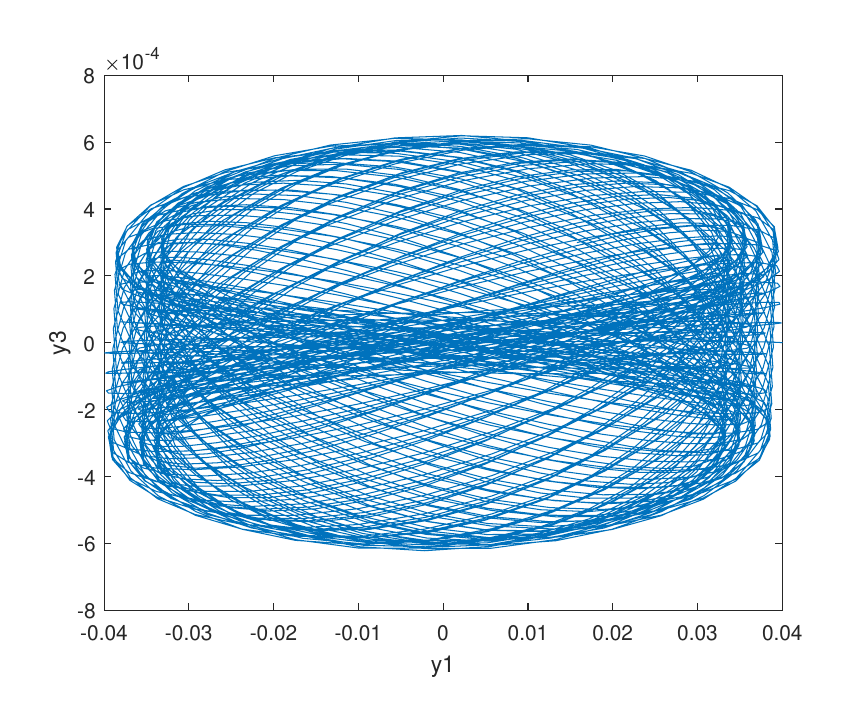}
	\end{minipage}
	\begin{minipage}{0.3\linewidth}
		\centering
		\includegraphics[width=55mm]{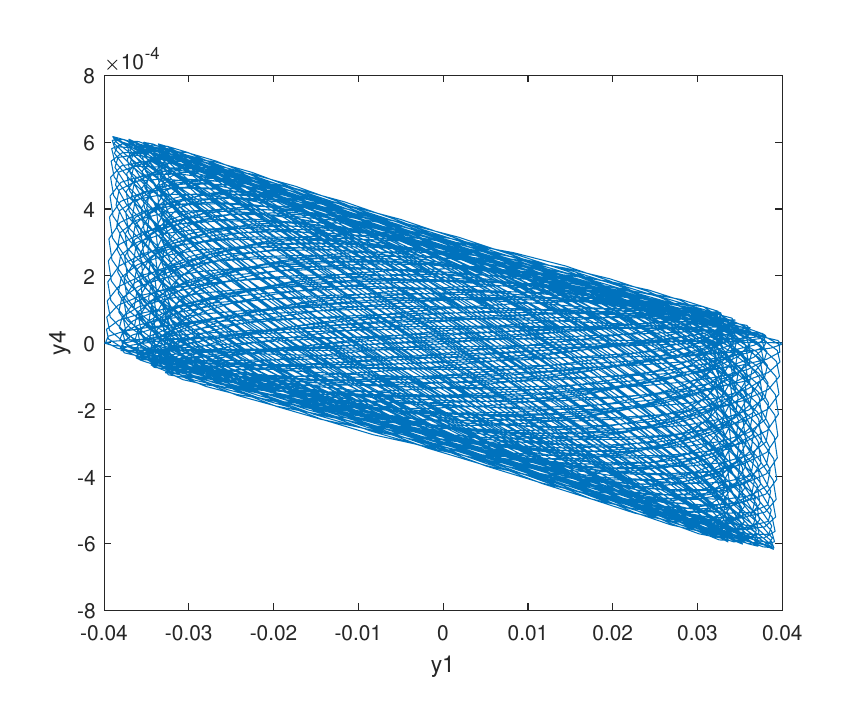}
	\end{minipage}
	
	\begin{minipage}{0.3\linewidth}
		\centering
		\includegraphics[width=55mm]{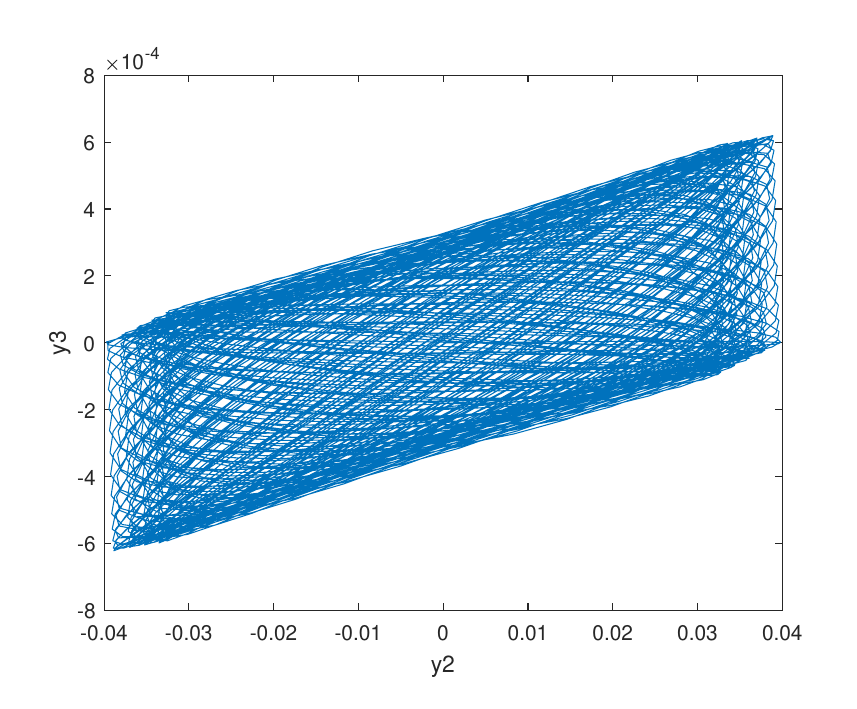}
	\end{minipage}
	\begin{minipage}{0.3\linewidth}
		\centering
		\includegraphics[width=55mm]{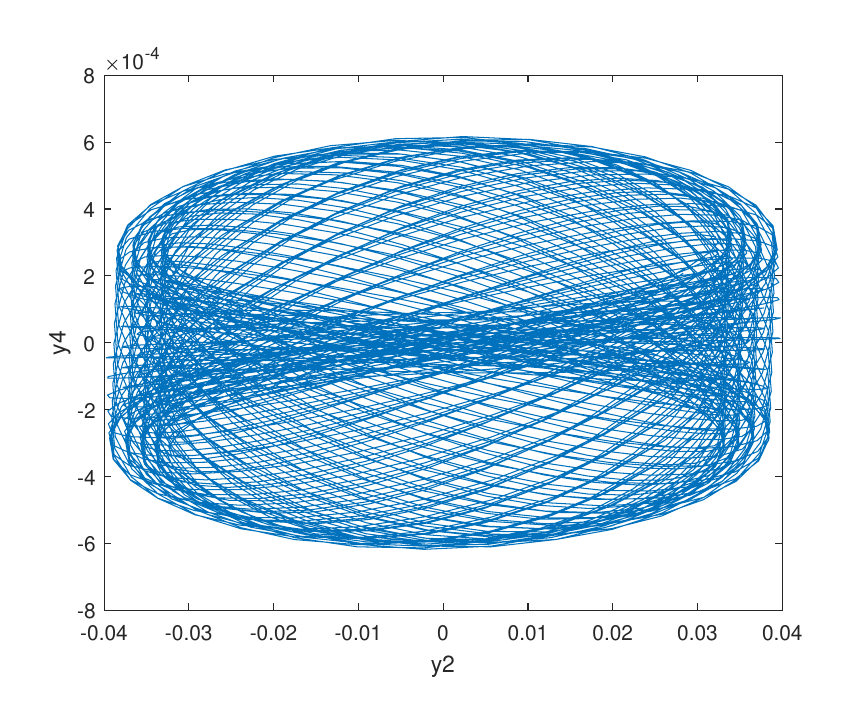}
	\end{minipage}
	\begin{minipage}{0.3\linewidth}
		\centering
		\includegraphics[width=55mm]{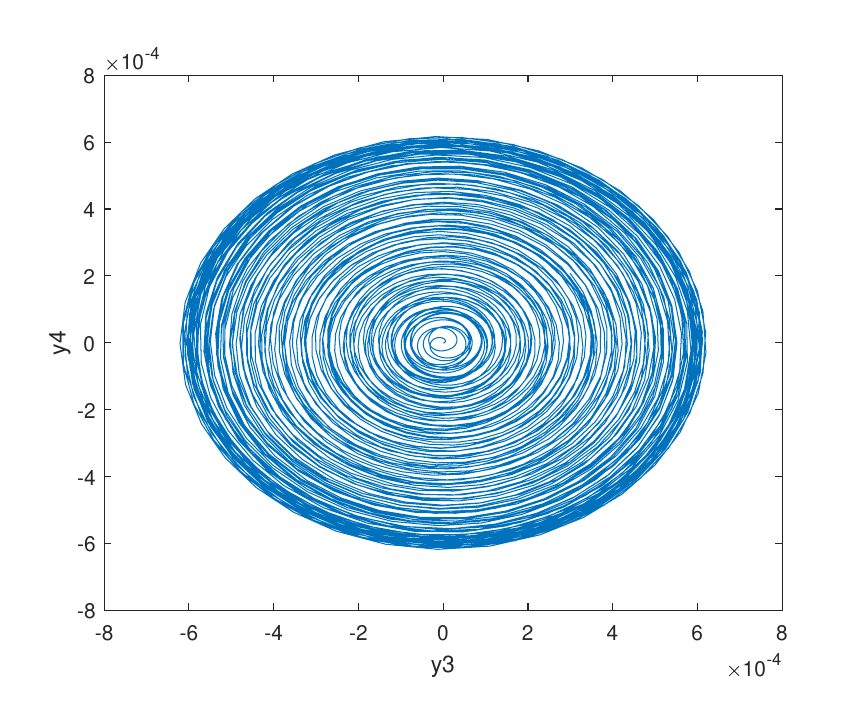}
	\end{minipage}
	\caption{When $(\alpha_{1},\alpha_{2})=(-0.00006,-0.00024)$ and $t\in \left[ 0,100 \right]$, the phase portraits of system \eqref{last}.}
	\label{Figure3-cm}
\end{figure}

\section*{Acknowledgments}
\par
The work was supported by the National Natural Science Foundation of China (No. 11971163).

\section*{Appendix}\label{secA}
\subsection*{A1. The proof of Lemma \ref{lemma1}}
\begin{proof}
	Since $\varepsilon^{2}+4a^{2}>4$, we have $\omega_{1} \neq \omega_{2}$. 
	Without loss of generality, we may consider that $\omega_{1} > \omega_{2}$. 
	we only need to prove that $\omega_{1} / \omega_{2}$ cannot be $2$ or $3$. 
	If $\omega_{1}=2\omega_{2}$, then it follows from equation \eqref{ceqs1} that
	\begin{align*}
		2\omega_{2}&=\omega_{1}=a \sin(\omega_{1} \tau)=a \sin(2\omega_{2} \tau)=2a \sin(\omega_{2} \tau) \cos(\omega_{2} \tau)=2\omega_{2} \cos(\omega_{2} \tau).
	\end{align*}
	It implies $\cos(\omega_{2} \tau)=1$, i.e., $\omega_{2}=a\sin(\omega_{2} \tau)=0$, which leads to a contradiction. 
	If $\omega_{1}=3\omega_{2}$, then, similarly,
	\begin{align*}
		3\omega_{2}&=\omega_{1}=a \sin(3\omega_{2} \tau)=a \sin(\omega_{2} \tau) \cdot (3-4\sin^{2}(\omega_{2} \tau))=3\omega_{2}-4\omega_{2}\sin^{2}(\omega_{2} \tau),
	\end{align*}
	i.e., $\omega_{2}\sin^{2}(\omega_{2} \tau)=0$. This contradicts the fact that $\omega_{2} \neq 0$.
\end{proof}

\subsection*{A2. Some exact expressions of $W(z,\theta)$ and $g(z,\alpha)$}
Let
\begin{align*}
	W(z,\theta)=\sum_{j+k+l+m \geq 2} w_{jklm}(\theta,\alpha) z_{1}^{j} z_{2}^{k} z_{3}^{l} z_{4}^{m}.
\end{align*}
Since $f^{\prime\prime}(0)=0$ in \eqref{fulleq}, it is apparent from the equation \eqref{cen-mani-ode} that $w_{jklm}(\theta,\alpha)=0$ if $j+k+l+m=2$. 
When $j+k+l+m=3$, solving the equation \eqref{cen-mani-ode}, we obtain 
\begin{align*}
	w_{jklm}(\theta,0)=e^{((j-k)\omega_{1}+(l-m)\omega_{2}){\rm i}\tau_{0}\theta}\left( \int_{0}^{\theta}e^{((j-k)\omega_{1}+(l-m)\omega_{2}){\rm i}\tau_{0}t}\Phi(t)\Psi(0)G_{jklm}\mathrm{d}t+c_{jklm}\right),
\end{align*}
with
\begin{small}
\begin{align*}
	c_{jklm}=&\left( ((j-k)\omega_{1}+(l-m)\omega_{2}){\rm i}\tau_{0}E-\tau_{0}M(0)-\tau_{0} e^{-((j-k)\omega_{1}+(l-m)\omega_{2}){\rm i}\tau_{0}} N(0) \right)^{-1}\\
	&\left( (E-\Phi(0)\Psi(0))G_{jklm}-\tau_{0} e^{-((j-k)\omega_{1}+(l-m)\omega_{2}){\rm i}\tau_{0}} N(0) \int_{-1}^{0}e^{((j-k)\omega_{1}+(l-m)\omega_{2}){\rm i}\tau_{0}t}\Phi(t)\Psi(0)G_{jklm}\mathrm{d}t \right),
\end{align*}
\end{small}
where $E$ is the $2 \times 2$ unit matrix and $G_{jklm}$ is the coefficient matrix of the term $z_{1}^{j} z_{2}^{k} z_{3}^{l} z_{4}^{m}$ in $G(\Phi z,0)$.
\par
Set
\begin{align*}
	g_{i}(z,\alpha)=\sum_{j+k+l+m \geq 1} g_{i,jklm}(\alpha) z_{1}^{j} z_{2}^{k} z_{3}^{l} z_{4}^{m}, \qquad i=1,2,3,4,
\end{align*}
where $g_{i}(z,\alpha)$ is the $i$th component of $g(z,\alpha)$.
Some tedious manipulation yields
\begin{align*}
	g_{2,jklm}(\alpha)=&g_{1,kjml}(\alpha), \qquad g_{4,jklm}(\alpha)=g_{3,kjml}(\alpha),\\
	g_{1,1000}(\alpha)=&D_{1} \tau_{0} ( a e^{-{\rm i} \tau_{0} \omega_{1}} + {\rm i} \omega_{1} ) \alpha_{1}
	+D_{1} ( a \varepsilon_{0} e^{-{\rm i} \tau_{0} \omega_{1}} - \omega_{1}^{2}-1 ) \alpha_{2}
	+O(\left\| \alpha \right\| ^{2}),\\
	g_{1,0100}(\alpha)=&D_{1} \tau_{0} ( a e^{{\rm i} \tau_{0} \omega_{1}} - {\rm i} \omega_{1} ) \alpha_{1}
	+D_{1} ( a \varepsilon_{0} e^{{\rm i} \tau_{0} \omega_{1}} + \omega_{1}^{2}-1 ) \alpha_{2}
	+O(\left\| \alpha \right\| ^{2}),\\
	g_{1,0010}(\alpha)=&D_{1} \tau_{0} ( a e^{-{\rm i} \tau_{0} \omega_{2}} + {\rm i} \omega_{2} ) \alpha_{1}
	+D_{1} ( a \varepsilon_{0} e^{-{\rm i} \tau_{0} \omega_{2}} - \omega_{1}\omega_{2}-1 ) \alpha_{2}
	+O(\left\| \alpha \right\| ^{2}),\\
	g_{1,0001}(\alpha)=&D_{1} \tau_{0} ( a e^{{\rm i} \tau_{0} \omega_{2}} - {\rm i} \omega_{2} ) \alpha_{1}
	+D_{1} ( a \varepsilon_{0} e^{{\rm i} \tau_{0} \omega_{2}} + \omega_{1}\omega_{2}-1 ) \alpha_{2}
	+O(\left\| \alpha \right\| ^{2}),\\
	g_{3,1000}(\alpha)=&D_{2} \tau_{0} ( a e^{-{\rm i} \tau_{0} \omega_{1}} + {\rm i} \omega_{1} ) \alpha_{1}
	+D_{2} ( a \varepsilon_{0} e^{-{\rm i} \tau_{0} \omega_{1}} - \omega_{1}\omega_{2}-1 ) \alpha_{2}
	+O(\left\| \alpha \right\| ^{2}),\\
	g_{3,0100}(\alpha)=&D_{2} \tau_{0} ( a e^{{\rm i} \tau_{0} \omega_{1}} - {\rm i} \omega_{1} ) \alpha_{1}
	+D_{2} ( a \varepsilon_{0} e^{{\rm i} \tau_{0} \omega_{1}} + \omega_{1}\omega_{2}-1 ) \alpha_{2}
	+O(\left\| \alpha \right\| ^{2}),\\
	g_{3,0010}(\alpha)=&D_{2} \tau_{0} ( a e^{-{\rm i} \tau_{0} \omega_{2}} + {\rm i} \omega_{2} ) \alpha_{1}
	+D_{2} ( a \varepsilon_{0} e^{-{\rm i} \tau_{0} \omega_{2}} - \omega_{2}^{2}-1 ) \alpha_{2}
	+O(\left\| \alpha \right\| ^{2}),\\
	g_{3,0001}(\alpha)=&D_{2} \tau_{0} ( a e^{{\rm i} \tau_{0} \omega_{2}} - {\rm i} \omega_{2} ) \alpha_{1}
	+D_{2} ( a \varepsilon_{0} e^{{\rm i} \tau_{0} \omega_{2}} + \omega_{2}^{2}-1 ) \alpha_{2}
	+O(\left\| \alpha \right\| ^{2}),
\end{align*}
and $g_{i,jklm}(0)=0$, for $i=1,3$, $j+k+l+m=2$,
\begin{align*}
	g_{1,3000}(0)=&D_{1}\tau_{0}\varepsilon_{0} (b e^{-3{\rm i}\tau_{0}\omega_{1}} -{\rm i}\omega_{1}),&
	g_{3,3000}(0)=&D_{2}\tau_{0}\varepsilon_{0} (b e^{-3{\rm i}\tau_{0}\omega_{1}} -{\rm i}\omega_{1}),\\
	g_{1,0300}(0)=&D_{1}\tau_{0}\varepsilon_{0} (b e^{3{\rm i}\tau_{0}\omega_{1}} +{\rm i}\omega_{1}),&
	g_{3,0300}(0)=&D_{2}\tau_{0}\varepsilon_{0} (b e^{3{\rm i}\tau_{0}\omega_{1}} +{\rm i}\omega_{1}),\\
	g_{1,0030}(0)=&D_{1}\tau_{0}\varepsilon_{0} (b e^{-3{\rm i}\tau_{0}\omega_{2}} -{\rm i}\omega_{2}),&
	g_{3,0030}(0)=&D_{2}\tau_{0}\varepsilon_{0} (b e^{-3{\rm i}\tau_{0}\omega_{2}} -{\rm i}\omega_{2}),\\
	g_{1,0003}(0)=&D_{1}\tau_{0}\varepsilon_{0} (b e^{3{\rm i}\tau_{0}\omega_{2}} +{\rm i}\omega_{2}),&
	g_{3,0003}(0)=&D_{2}\tau_{0}\varepsilon_{0} (b e^{3{\rm i}\tau_{0}\omega_{2}} +{\rm i}\omega_{2}),\\
	g_{1,2100}(0)=&D_{1}\tau_{0}\varepsilon_{0} (3b e^{-{\rm i}\tau_{0}\omega_{1}} -{\rm i}\omega_{1}),&
	g_{3,2100}(0)=&D_{2}\tau_{0}\varepsilon_{0} (3b e^{-{\rm i}\tau_{0}\omega_{1}} -{\rm i}\omega_{1}),\\
	g_{1,1200}(0)=&D_{1}\tau_{0}\varepsilon_{0} (3b e^{{\rm i}\tau_{0}\omega_{1}} +{\rm i}\omega_{1}),&
	g_{3,1200}(0)=&D_{2}\tau_{0}\varepsilon_{0} (3b e^{{\rm i}\tau_{0}\omega_{1}} +{\rm i}\omega_{1}),\\
	g_{1,2010}(0)=&D_{1}\tau_{0}\varepsilon_{0} (3b e^{-{\rm i}\tau_{0}(2\omega_{1}+\omega_{2})} -{\rm i}(2\omega_{1}+\omega_{2})),&
	g_{3,2010}(0)=&D_{2}\tau_{0}\varepsilon_{0} (3b e^{-{\rm i}\tau_{0}(2\omega_{1}+\omega_{2})} -{\rm i}(2\omega_{1}+\omega_{2})),\\
	g_{1,1020}(0)=&D_{1}\tau_{0}\varepsilon_{0} (3b e^{-{\rm i}\tau_{0}(\omega_{1}+2\omega_{2})} -{\rm i}(\omega_{1}+2\omega_{2})),&
	g_{3,1020}(0)=&D_{2}\tau_{0}\varepsilon_{0} (3b e^{-{\rm i}\tau_{0}(\omega_{1}+2\omega_{2})} -{\rm i}(\omega_{1}+2\omega_{2})),\\
	g_{1,2001}(0)=&D_{1}\tau_{0}\varepsilon_{0} (3b e^{-{\rm i}\tau_{0}(2\omega_{1}-\omega_{2})} -{\rm i}(2\omega_{1}-\omega_{2})),&
	g_{3,2001}(0)=&D_{2}\tau_{0}\varepsilon_{0} (3b e^{-{\rm i}\tau_{0}(2\omega_{1}-\omega_{2})} -{\rm i}(2\omega_{1}-\omega_{2})),\\
	g_{1,1002}(0)=&D_{1}\tau_{0}\varepsilon_{0} (3b e^{-{\rm i}\tau_{0}(\omega_{1}-2\omega_{2})} -{\rm i}(\omega_{1}-2\omega_{2})),&
	g_{3,1002}(0)=&D_{2}\tau_{0}\varepsilon_{0} (3b e^{-{\rm i}\tau_{0}(\omega_{1}-2\omega_{2})} -{\rm i}(\omega_{1}-2\omega_{2})),\\
	g_{1,0210}(0)=&D_{1}\tau_{0}\varepsilon_{0} (3b e^{{\rm i}\tau_{0}(2\omega_{1}-\omega_{2})} +{\rm i}(2\omega_{1}-\omega_{2})),&
	g_{3,0210}(0)=&D_{2}\tau_{0}\varepsilon_{0} (3b e^{{\rm i}\tau_{0}(2\omega_{1}-\omega_{2})} +{\rm i}(2\omega_{1}-\omega_{2})),\\
	g_{1,0120}(0)=&D_{1}\tau_{0}\varepsilon_{0} (3b e^{{\rm i}\tau_{0}(\omega_{1}-2\omega_{2})} +{\rm i}(\omega_{1}-2\omega_{2})),&
	g_{3,0120}(0)=&D_{2}\tau_{0}\varepsilon_{0} (3b e^{{\rm i}\tau_{0}(\omega_{1}-2\omega_{2})} +{\rm i}(\omega_{1}-2\omega_{2})),\\
	g_{1,0201}(0)=&D_{1}\tau_{0}\varepsilon_{0} (3b e^{{\rm i}\tau_{0}(2\omega_{1}+\omega_{2})} +{\rm i}(2\omega_{1}+\omega_{2})),&
	g_{3,0201}(0)=&D_{2}\tau_{0}\varepsilon_{0} (3b e^{{\rm i}\tau_{0}(2\omega_{1}+\omega_{2})} +{\rm i}(2\omega_{1}+\omega_{2})),\\
	g_{1,0102}(0)=&D_{1}\tau_{0}\varepsilon_{0} (3b e^{{\rm i}\tau_{0}(\omega_{1}+2\omega_{2})} +{\rm i}(\omega_{1}+2\omega_{2})),&
	g_{3,0102}(0)=&D_{2}\tau_{0}\varepsilon_{0} (3b e^{{\rm i}\tau_{0}(\omega_{1}+2\omega_{2})} +{\rm i}(\omega_{1}+2\omega_{2})),\\
	g_{1,0021}(0)=&D_{1}\tau_{0}\varepsilon_{0} (3b e^{-{\rm i}\tau_{0}\omega_{2}} -{\rm i}\omega_{2}),&
	g_{3,0021}(0)=&D_{2}\tau_{0}\varepsilon_{0} (3b e^{-{\rm i}\tau_{0}\omega_{2}} -{\rm i}\omega_{2}),\\
	g_{1,0012}(0)=&D_{1}\tau_{0}\varepsilon_{0} (3b e^{{\rm i}\tau_{0}\omega_{2}} +{\rm i}\omega_{2}),&
	g_{3,0012}(0)=&D_{2}\tau_{0}\varepsilon_{0} (3b e^{{\rm i}\tau_{0}\omega_{2}} +{\rm i}\omega_{2}),\\
	g_{1,1110}(0)=&2g_{1,0021}(0),&
	g_{3,1110}(0)=&2g_{3,0021}(0),\\
	g_{1,1101}(0)=&2g_{1,0012}(0),&
	g_{3,1101}(0)=&2g_{3,0012}(0),\\
	g_{1,1011}(0)=&2g_{1,2100}(0),&
	g_{3,1011}(0)=&2g_{3,2100}(0),\\
	g_{1,0111}(0)=&2g_{1,1200}(0),&
	g_{3,0111}(0)=&2g_{3,1200}(0),
\end{align*}
\begin{align*}	
	g_{1,3200}(0)=&D_{1} \tau_{0} \varepsilon_{0} (10 d e^{-{\rm i} \tau_{0} \omega_{1}} 
	+3 b w_{1,1200}(-1) e^{-2 {\rm i} \tau_{0} \omega_{1}}
	+3 b w_{1,3000}(-1) e^{ 2 {\rm i} \tau_{0} \omega_{1}}
	+6 b w_{1,2100}(-1)\\
	&-2 {\rm i} w_{1,1200}(0) \omega_{1}
	+2 {\rm i} w_{1,3000}(0) \omega_{1}
	-w_{2,1200}(0)-2 w_{2,2100}(0)-w_{2,3000}(0)), \\
	g_{1,2111}(0)=&D_{1} \tau_{0} \varepsilon_{0}(60 d e^{-{\rm i} \tau_{0} \omega_{1}} 
	+3 b w_{1,0111}(-1) e^{-2 {\rm i} \tau_{0} \omega_{1}}
	+6 b w_{1,1101}(-1) e^{-{\rm i} \tau_{0} (\omega_{1}+\omega_{2})}\\
	&+6 b w_{1,1110}(-1) e^{-{\rm i} \tau_{0} (\omega_{1}-\omega_{2})}
	+6 b w_{1,2001}(-1) e^{{\rm i} \tau_{0} (\omega_{1}-\omega_{2})}
	+6 b w_{1,2010}(-1) e^{{\rm i} \tau_{0} (\omega_{1}+\omega_{2})}\\
	&+6 b w_{1,1011}(-1)
	+6 b w_{1,2100}(-1)
	+2 {\rm i} w_{1,2010}(0) \omega_{1}
	+2 {\rm i} w_{1,2001}(0) \omega_{1}
	-2 {\rm i} w_{1,0111}(0) \omega_{1}\\
	&-2 {\rm i} w_{1,1101}(0) \omega_{1}
	-2 {\rm i} w_{1,1110}(0) \omega_{1}
	+2 {\rm i} w_{1,1110}(0) \omega_{2}
	+2 {\rm i} w_{1,2010}(0) \omega_{2}
	-2 {\rm i} w_{1,2001}(0) \omega_{2}\\
	&-2 {\rm i} w_{1,1101}(0) \omega_{2}
	-w_{2,0111}(0)-2 w_{2,1011}(0)-2 w_{2,1101}(0)-2 w_{2,1110}(0)\\
	&-2 w_{2,2001}(0)-2 w_{2,2010}(0)-2 w_{2,2100}(0) ), \\
	g_{1,1022}(0)=&D_{1} \tau_{0} \varepsilon_{0}(30 d e^{-{\rm i} \tau_{0} \omega_{1}}
	+6 b w_{1,0012}(-1) e^{-{\rm i} \tau_{0} (\omega_{1}+\omega_{2})}
	+6 b w_{1,0021}(-1) e^{-{\rm i} \tau_{0} (\omega_{1}-\omega_{2})}\\
	&+3 b w_{1,1002}(-1) e^{-2 {\rm i} \tau_{0} \omega_{2}}
	+3 b w_{1,1020}(-1) e^{2 {\rm i} \tau_{0} \omega_{2}}
	+6 b w_{1,1011}(-1)
	-2 {\rm i} w_{1,0012}(0) \omega_{1}\\
	&-2 {\rm i} w_{1,0021}(0) \omega_{1}
	+2 {\rm i} w_{1,0021}(0) \omega_{2}
	+2 {\rm i} w_{1,1020}(0) \omega_{2}
	-2 {\rm i} w_{1,0012}(0) \omega_{2}
	-2 {\rm i} w_{1,1002}(0) \omega_{2}\\
	&-w_{2,1002}(0)-w_{2,1020}(0)-2 w_{2,0012}(0)-2 w_{2,0021}(0)-2 w_{2,1011}(0)), \\
\end{align*}
\begin{align*}
	g_{3,2210}(0)=&D_{2} \tau_{0} \varepsilon_{0}(30 d e^{-{\rm i} \tau_{0} \omega_{2}}
	+6 b w_{1,2100}(-1) e^{{\rm i} \tau_{0} (\omega_{1}-\omega_{2})}
	+6 b w_{1,1200}(-1) e^{-{\rm i} \tau_{0} (\omega_{1}+\omega_{2})}\\
	&+3 b w_{1,2010}(-1) e^{2 {\rm i} \tau_{0} \omega_{1}}
	+3 b w_{1,0210}(-1) e^{-2 {\rm i} \tau_{0} \omega_{1}}
	+6 b w_{1,1110}(-1)
	+2 {\rm i} w_{1,2010}(0) \omega_{1}\\
	&+2 {\rm i} w_{1,2100}(0) \omega_{1}
	-2 {\rm i} w_{1,0210}(0) \omega_{1}
	-2 {\rm i} w_{1,1200}(0) \omega_{1}
	-2 {\rm i} w_{1,1200}(0) \omega_{2}
	-2 {\rm i} w_{1,2100}(0) \omega_{2}\\
	&-w_{2,0210}(0)-w_{2,2010}(0)-2 w_{2,1110}(0)-2 w_{2,1200}(0)-2 w_{2,2100}(0)), \\
	g_{3,1121}(0)=&D_{2} \tau_{0} \varepsilon_{0}(60 d e^{-{\rm i} \tau_{0} \omega_{2}} 
	+3 b w_{1,1101}(-1) e^{-2 {\rm i} \tau_{0} \omega_{2}}
	+6 b w_{1,0111}(-1) e^{-{\rm i} \tau_{0} (\omega_{1}+\omega_{2})}\\
	&+6 b w_{1,0120}(-1) e^{-{\rm i} \tau_{0} (\omega_{1}-\omega_{2})}
	+6 b w_{1,1011}(-1) e^{{\rm i} \tau_{0} (\omega_{1}-\omega_{2})}
	+6 b w_{1,1020}(-1) e^{{\rm i} \tau_{0} (\omega_{1}+\omega_{2})}\\
	&+6 b w_{1,0021}(-1)
	+6 b w_{1,1110}(-1)
	+2 {\rm i} w_{1,1020}(0) \omega_{1}
	+2 {\rm i} w_{1,1011}(0) \omega_{1}
	-2 {\rm i} w_{1,0111}(0) \omega_{1}\\
	&-2 {\rm i} w_{1,0120}(0) \omega_{1}
	+2 {\rm i} w_{1,0120}(0) \omega_{2}
	+2 {\rm i} w_{1,1020}(0) \omega_{2}
	-2 {\rm i} w_{1,1011}(0) \omega_{2}
	-2 {\rm i} w_{1,1101}(0) \omega_{2}\\
	&-2 {\rm i} w_{1,0111}(0) \omega_{2}
	-2 w_{2,0021}(0)-2 w_{2,0111}(0)-2 w_{2,0120}(0)-2 w_{2,1011}(0)\\
	&-2 w_{2,1020}(0)-2 w_{2,1110}(0)-w_{2,1101}(0) ), \\
	g_{3,0032}(0)=&D_{2} \tau_{0} \varepsilon_{0}(10 d e^{-{\rm i} \tau_{0} \omega_{2}}
	+3 b w_{1,0012}(-1) e^{-2 {\rm i} \tau_{0} \omega_{2}}
	+3 b w_{1,0030}(-1) e^{2 {\rm i} \tau_{0} \omega_{2}}
	+6 b w_{1,0021}(-1)\\
	&+2 {\rm i} w_{1,0030}(0) \omega_{2}
	-2 {\rm i} w_{1,0012}(0) \omega_{2}
	-w_{2,0030}(0)-w_{2,0012}(0)-2 w_{2,0021}(0)).
\end{align*}

\subsection*{A3. The value of $A_{ij}$}
For fixed $a=1$, $b=\frac{1}{6}$, $c=d=0$, when $\varepsilon_{0} \approx 0.2533$ and $\tau_{0}=2.5\pi$, we obtain
\begin{align*}
	A_{11}&=722.299-{\rm i}12679.4,
	&A_{12}&=3233.98-{\rm i}62094.9,
	&A_{13}&=2179.59-{\rm i}30955.1,\\
	A_{21}&=-2160.96-{\rm i}31242.7,
	&A_{22}&=-3244.20+{\rm i}62156.1,
	&A_{23}&=-731.82+{\rm i}12190.3.
\end{align*}

\subsection*{A4. Expressions in \eqref{4}}
\begin{align*}
	d_{11}=(2-\gamma-\gamma\sigma)\zeta_{2}
	-\mathrm{i}(\gamma+2) \sqrt{\gamma  \sigma -1}\zeta_{2}
	+(-2 (\gamma  \sigma +\gamma -1)+\mathrm{i}\frac{-\gamma ^2 \sigma -3 \gamma  \sigma +2 \gamma +2}{\sqrt{\gamma  \sigma -1}})\alpha_{3} +O(\alpha_{3}^2)+O(\epsilon),
\end{align*}
\begin{align*}
	d_{12}=&(-2 \gamma
	-\mathrm{i}\frac{2 \gamma  (1-\sigma ) \sqrt{\gamma  \sigma -1}}{1-\gamma  \sigma })\alpha_{3}
	+\zeta_{2}^{2}(2 \left(-\gamma  \Gamma _1-\Gamma _3 \sigma +\Gamma _2\right)\\
	&+\mathrm{i}\frac{2 \left(-\gamma ^2 \Sigma _1+\gamma  \Gamma _1-\gamma  \sigma  \Sigma _3+\gamma  \Sigma _2+\Gamma _3 \sigma -\Gamma _2\right)}{\sqrt{\gamma  \sigma -1}})\epsilon+O(\alpha_{3}^2)+O(\epsilon^2)+O(\epsilon \alpha_{3}),
\end{align*}
\begin{align*}
	d_{13}=-(2-\gamma-\gamma\sigma)\zeta_{2}
	+\mathrm{i}(\gamma -2) \sqrt{\gamma  \sigma -1}\zeta_{2}
	+(2 (\gamma  \sigma -1)+\mathrm{i}\frac{\gamma ^2 \sigma -3 \gamma  \sigma +2}{\sqrt{\gamma  \sigma -1}})\alpha_{3}
	+O(\alpha_{3}^2)+O(\epsilon),
\end{align*}
\begin{align*}
	D_{12}=&(-2 \gamma  \left(-3 \gamma  \Gamma _1-\gamma  \Sigma _1-\Gamma _3 \sigma +2 \Gamma _2-3 \sigma  \Sigma _3+2 \Sigma _2\right)\\
	&+\mathrm{i}(\frac{2 \gamma  \left(\gamma  \Gamma _2 \sigma -3 \gamma  \Gamma _1-\gamma  \sigma  \Sigma _2+3 \gamma  \Sigma _1-3 \Gamma _3 \sigma +2 \Gamma _2+3 \sigma  \Sigma _3-2 \Sigma _2\right)}{\sqrt{\gamma  \sigma -1}}))\zeta_{2}^{2}+O(\alpha_{3})+O(\epsilon).
\end{align*}

\subsection*{A5. Expressions in \eqref{6}}
\begin{align*}
	l_{21}(\alpha_{3})=\frac{3\alpha_{3}+\beta_{3}{\rm i}}{\alpha_{3}^{2}+\beta_{3}^{2}} d_{11} d_{12} + \frac{\alpha_{3}-\beta_{3}{\rm i}}{\alpha_{3}^{2}+\beta_{3}^{2}} \left| d_{12} \right|^{2} + \frac{2\alpha_{3}-6\beta_{3}{\rm i}}{\alpha_{3}^{2}+9\beta_{3}^{2}} \left| d_{13} \right|^{2} + \epsilon D_{12},
\end{align*}
\begin{align*}
	l_{32}(0)=-\frac{{\rm i}}{3 \beta_{3}^{3}}\left| d_{11} \right|^{2}\left| d_{13} \right|^{2}-\frac{8{\rm i}}{27 \beta_{3}^{3}}\left| d_{13} \right|^{4}+O(\epsilon),
\end{align*}
\begin{align*}
	l_{43}(0)=-\frac{4{\rm i}}{45 \beta_{3}^{5}}\left| d_{13} \right|^{2}{\rm Re}\left\lbrace d_{11}^{3}d_{13}\right\rbrace -\frac{{\rm i}}{5 \beta_{3}^{5}}\left| d_{11} \right|^{4}\left| d_{13} \right|^{2} -\frac{37{\rm i}}{90 \beta_{3}^{5}}\left| d_{11} \right|^{2}\left| d_{13} \right|^{4} -\frac{20{\rm i}}{81 \beta_{3}^{5}}\left| d_{13} \right|^{6}+O(\epsilon),
\end{align*}

\end{document}